\newcommand{\cec}{\color{magenta}}
\newcommand{\fincec}{\color{black}}
\newtheorem{thm}{Theorem}[section]
\newtheorem{defn}[thm]{Definition}
\newtheorem{lem}[thm]{Lemma}
\newtheorem{prop}[thm]{Proposition}
\newtheorem{cor}[thm]{Corollary}
\theoremstyle{definition}
\newcommand{\cA}{\mathcal A}
\newcommand{\bq}{\mathbf q} 
\newcommand{\bs}{\mathbf s} 
\newcommand{\bds}[1]{\boldsymbol{#1}}
\newcommand{\sss}{\scriptscriptstyle}
\newcommand{\of}{\operatorname f}
\newcommand{\uni}{\text{uni}}
\newcommand{\bst}{\text{bst}}
\newcommand{\Lab}{\mathtt{Lab}}
\newcommand{\bff}{\mathbf f}
\newcommand{\bin}{\mathrm{Bin}}
\newcommand{\True}{\ensuremath{\mathrm{True}}}
\newcommand{\False}{\ensuremath{\mathrm{False}}}
\newcommand{\trim}{\mathtt{trim}}
\newcommand{\N}{\ensuremath{\mathbb N}}
\newcommand{\R}{\ensuremath{\mathbb{R}}}
\newcommand{\bbT}{\ensuremath{\mathbb{T}}}
\newcommand{\bbP}{\ensuremath{\mathbb{P}}}
\newcommand{\p}[1]{{\mathbb P}\left(#1\right)}
\newcommand{\pc}[1]{{\mathbb P}(#1)}
\newcommand{\Ec}[1]{\ensuremath{\mathbb{E} [#1]}}
\newcommand{\E}[1]{\ensuremath{\mathbb{E} \left[#1 \right]}}
\newcommand{\I}[1]{\ensuremath{\mathbf{1}_{ \{ #1 \} }}}
\newcommand{\ber}{\text{Ber}}
\newcommand{\GW}{\ensuremath{\mathrm{GW}}}
\def\EgalLoi{{~\mathop{= }\limits^{(law)}}~}
\newcommand{\Ess}{\mathtt{Ess}}
\newcommand{\Supp}{\mathrm{Supp}}
\begin{document}


\title{And/Or trees: A local limit point of view}

\author{Nicolas Broutin\thanks{Inria Paris-Rocquencourt, Domaine de Voluceau, 78153 Le Chesnay, France and NYU Shanghai. \texttt{nicolas.broutin@inria.fr}.}~
and C\'ecile Mailler\thanks{Department of Mathematical Sciences, University of Bath, Claverton Down, BA2 7AY Bath, UK. \texttt{c.mailler@bath.ac.uk}. This author would like to thank EPSRC for support through the grant EP/K016075/1.}}


\maketitle

\begin{abstract}
We present here a new and universal approach for the study of random and/or trees,
unifying in one framework many different models, including some novel ones 
not yet understood in the literature.
An and/or tree is a Boolean expression represented in (one of) its tree shapes.
Fix an integer $k$, take a sequence of random (rooted) trees of increasing size, say
$(t_n)_{n\ge 1}$, and label each of these random trees uniformly at random in order 
to get a random Boolean expression on $k$ variables.

We prove that, under rather weak local conditions on the sequence of random trees $(t_n)_{n\ge 1}$, 
the distribution induced on Boolean functions by this procedure converges as $n$ tends to infinity. 
In particular, we characterise two different behaviours of this limit distribution 
depending on the shape of the local limit of $(t_n)_{n\ge 1}$: a \emph{degenerate} case when the 
local limit has no leaves; and a non-degenerate case, which we are able to describe in more details 
under stronger conditions. In this latter case, we provide a relationship between
the probability of a given Boolean function and its \emph{complexity}. 

The examples covered by this unified framework include
trees that interpolate between models with logarithmic typical distances 
(such as random binary search trees) and other ones with square root typical distances (such as 
conditioned Galton--Watson trees).
\end{abstract}

{\bf Keywords: } random trees, local limit, and/or trees, random Boolean functions.

\section{Introduction}\label{sec:intro}
The problem of generating a \emph{complex} 
random Boolean function and understanding its typical properties 
can be traced back to the pioneering work of \citet{RiSh1942}, 
in which the authors studied uniformly random $k$-variable Boolean functions (for large integer~$k$). 
However, the uniform distribution is only natural if one represents the functions by a truth table 
(assigning a uniformly random value to every possible entry vector). 
Another way of representing a Boolean function is by Boolean \emph{expressions}, 
and significant efforts have been made towards defining probability 
distributions on Boolean functions via of their representation by Boolean expressions.
For a general introduction to such questions and related problems, we refer the reader to 
the survey article by \citet{Gardy2006a} and the references therein.

The concept of and/or trees arises as a natural representation of a Boolean expression, 
and the many different standard distributions of random trees can be used to sample random Boolean expressions.
More precisely, an expression is equivalent to 
a tree whose internal nodes are labelled 
by the Boolean connectors `and' ($\wedge$) or `or' ($\vee$), and whose leaves are 
labelled by literals among $x_1, \bar x_1, \dots, x_k, \bar x_k$, for some integer $k\ge 1$,
where $\bar x$ denotes the negation of $x$ (see Figure~\ref{fig:example_Btree}). 
We call such trees \emph{and/or trees}.
The origins of this line of research may be traced back to the works of \citet{Woods1997}. Amongst 
other things, he proved the existence of a limit probability distribution for Boolean function 
represented by sequences of trees of increasing sizes, and conjectured that there might be a 
relationship between the \emph{complexity} of a function and the probability that it is sampled.



\citet{Woods1997} considered functions represented by uniformly random Cayley trees (general rooted 
trees on $[n]=\{1,2,\dots, n\}$).
The case of Boolean functions encoded by uniformly random binary and/or trees, called the Catalan tree model, 
was first studied by \citet{LeSa1997}. Both papers proved the existence of 
a natural probability distribution on $k$-variable Boolean functions which is the 
weak limit of the probability induced by the uniform random trees on $[n]$ and
by the uniform binary and/or trees of size $n$, respectively. 
Since the results are similar, for the sake of presentation and only in this introduction, 
we focus on the case of binary trees in \cite{LeSa1997}: 
for a function $f$ in $k$ Boolean variables, the corresponding 
limit probability is denoted by $\mathbb P_k^{\uni}(f)$. 
\citet{LeSa1997} also obtain bounds on $\mathbb P_k^\uni(f)$ in terms of the complexity $L(f)$ of 
the function $f$ (being the minimal number of \emph{leaves} of a tree representing $f$) 
of the kind suggested by \citet{Woods1997}. More precisely, they prove that, for all $k\ge 2$,
\begin{equation}\label{eq:LeSa-bounds}
\exp(-c_1 L(f) \log k) \leq \mathbb P_k^\uni(f) \le \exp(- c_2 L(f) k^{-3}),
\end{equation}
for two constants $c_1,c_2>0$. 
The results in \cite{LeSa1997} were then reproduced and slightly improved by \citet{ChFlGaGi2004}
(who replace the $k^{-3}$ in the right-hand side of~\eqref{eq:LeSa-bounds} by $k^{-2}$).
These bounds are the only results in the literature that hold for fixed~$k$;
as a side remark, we will show in this article how to improve further  
the upper bound by a very simple symmetry argument.
This improvement is significant for two reasons:
the new upper bound constrains the probability of functions with complexity $o(k^2)$,
and we will show using a class of functions called {\it read-once} that it is sharp.

More precise bounds seem hard to obtain without considering the limit as $k$ tends to infinity. 
The first result in this direction was by \citet{Kozik2008a} 
who showed that for all integer $k_0$, for all $k_0$-variable Boolean function $f$,
\begin{equation}\label{eq:kozik-theta}
\mathbb P_k^\uni(f) = \Theta\left(\frac1{k^{L(f)+1}}\right)\text{ when }k\to+\infty,
\end{equation}
where the constants involved in the $\Theta$-term depend on~$f$.

More recently, it was proved by \citet{ChGaMa2011a} that if one replaces the 
uniformly random binary trees underlying the distribution defined in \citet{LeSa1997} by 
random binary search trees (see, e.g., \citet{Knuth1973b}), then the behaviour of the family of distributions 
induced on Boolean function is radically different: indeed, writing $\mathbb P_k^{\bst,n}$ for the probability 
induced on $k$-variable Boolean functions by trees of size~$n$, one has 
\[\mathbb P_k^{\bst,n}(\{\True, \False\}) \to 1, \text{ as }n\to+\infty,\]
for all integer $k$,
where $\True$ and $\False$ are the two constant functions:
$((x_1, \ldots, x_k)\mapsto \True)$ and $((x_1, \ldots, x_k)$ $\mapsto \False)$;
we say that this distribution is {\it degenerate}.

Although slight generalisations were considered in the literature (see for example \citet{GGKM15}),
essentially only the two models of random and/or trees described above 
(uniform binary tree and random binary search tree) have been studied.
However, there is no a priori reason why one should choose the underlying tree to 
be binary (as already mentioned in~\citet{GGKM15}, 
the conjunction and disjunction connectors are associative), 
nor any reason that justifies the uniform or the random binary search tree distributions,
apart of course from the fact that one can do some explicit computations in these cases. 
This is why we initiate in this paper 
a more general approach that is independent of the underlying family of trees.

Our aim is to place the previous studies in a common framework by introducing
a family of distributions on Boolean functions defined as weak limits 
of distributions coming from tree representations.
This family will include most of the and/or tree models studied in the literature, as well as
some models which behaviour was unknown up to now.
This paper is the first attempt at describing this family of probability distributions in greater generality.
In particular, by taking a local limit point of view, we generalize and greatly simplify the proofs of previously known results.
We extract essential properties that are needed to prove weak convergence of the probability distributions 
induced by a sequence of random trees, 
and to obtain bounds on the probability of a given Boolean function in terms of its complexity.

The main insight given by this common framework is that
the key property of the underlying sequence of random trees of increasing sizes 
is its local limit as opposed to its scaling limit: on the one hand,
a local limit with no leaf \emph{near the root} will induce a distribution on Boolean functions 
that is degenerate or concentrated on the two constant functions $\True$ and $\False$
(the case of random binary search tree of \cite{ChGaMa2011a} is a typical example);
on the other hand, a sequence locally converging to an infinite spine with reasonably small sub-trees 
hanging from it will verify Equation~\eqref{eq:kozik-theta}.
As corollaries, we obtain new proofs of the results of \citet{LeSa1997}, \citet{ChFlGaGi2004} and \citet{Kozik2008a} discussed above.

Furthermore, we enlarge the family of Galton--Watson trees for which one can obtain such results and also consider 
Ford's alpha model~\cite{Ford2005}, a model that can be seen as interpolating between the Galton--Watson family and the binary search tree, which has not up till now been considered in the context of and/or trees.

\section{Description of the framework and main results}

\subsection{Boolean trees and Boolean functions}\label{sub:def}
In this paper, for all integer $k$, a $k$-variable {\bf Boolean tree} (or an and/or tree), 
is a rooted finite tree having no node of arity one (i.e. having exactly one child), 
whose internal nodes are labelled by the disjunction connective $\lor$ or by the conjunction connective $\land$,
and whose leaves are labelled by literals taken in a finite fixed set $\{x_1, \bar x_1, \ldots, x_k, \bar x_k\}$,
where $\bar x_i$ stands for the negation of the variable $x_i$.

The set of and/or Boolean expressions can be defined recursively as follows:
a Boolean expression is either a literal or its negation,
or the conjunction of at least two Boolean expressions,
or the disjunction of at least two Boolean expressions.
Any Boolean tree is equivalent to a Boolean expression.

Recall that for all integer $k$, a $k$-variable {\bf Boolean function} is a mapping from $\{\False,\True\}^{k}$ onto $\{\False, \True\}$.
For every assignment of the variables $(x_1, \ldots, x_k)\in\{\False, \True\}^k$, one can apply Boolean algebra rules from the leaves of a Boolean tree up to its root, and the value obtained at the root can be defined as the image of $(x_1, \ldots, x_k)$;
thus, every Boolean tree represents a Boolean function (but several trees can represent the same Boolean function as seen in Figure~\ref{fig:example_Btree}).
We denote by $\of[\tau]$ the Boolean function represented by the Boolean tree $\tau$.

The {\bf size} of a tree $t$, denoted by $\|t\|$, is the number of its leaves. 
The {\bf complexity} of a non-constant Boolean function $f$, denoted by $L(f)$, is the size of the smallest trees that represent it,
which we call {\bf minimal trees }of $f$ (see Figure~\ref{fig:example_Btree} for an example).
The complexity of the two constant Boolean functions, denoted respectively by $\True$ and $\False$, is~0.

\begin{figure}
\begin{center}
\includegraphics[width=0.6\textwidth]{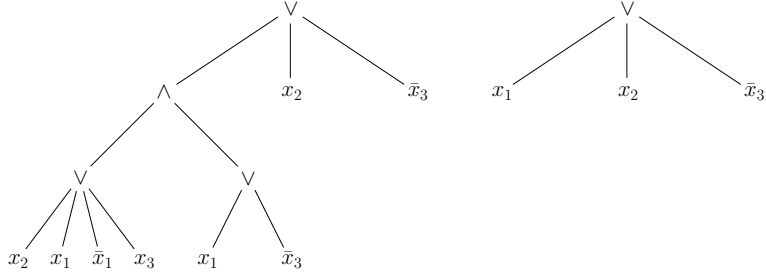}
\end{center}
\caption{Both this and/or trees represent the Boolean function $(x_1, x_2, x_3)\mapsto x_1 \lor x_2 \lor \bar x_3$. The left tree has size $8$. The right tree has size $3$ and is a minimal tree of the Boolean function $(x_1, x_2, x_3)\mapsto x_1 \lor x_2 \lor \bar x_3$, which thus has complexity~$3$.}
\label{fig:example_Btree}
\end{figure}

For any integer $k$, 
given a rooted tree $t$ having no node of arity one, 
we define its {\bf randomly $k$-labelled version} $\boldsymbol{\hat t}$ as follows 
(note that in order to keep notations simple the dependence in $k$ is not explicit in this notation):
each internal node of $t$ chooses a label in $\{\land, \lor\}$ uniformly at random,
each leaf of $t$ chooses a label in $\{x_1, \bar x_1, \ldots, x_k, \bar x_k\}$ uniformly at random,
independently from each other. 
For a Boolean function $g$, we denote by $P_k[t](g)$ the probability that 
the randomly $k$-labelled version of $t$ represents~$g$
(in other words, $P_k[t](g)$ is the probability that $\of[\hat t]$ equals $g$).

Except if mentioned otherwise, all trees considered in this article are assumed to contain no node of arity one.
This assumption is natural in the context of and/or trees since the two logical connectives $\land$ and $\lor$ are binary operators.
Note that thanks to associativity, they can also be considered as $r$-ary operators for all $r\geq 2$, 
which is why we do not restrict ourselves to binary trees, as sometimes done in the literature (see for example~\cite{LeSa1997, ChFlGaGi2004, Kozik2008a}).

\subsection{The local topology, infinite trees, and continuity results}
One of the main goals of the paper is to properly extend the above definition of a 
distribution $P_k[t]$ on the set of Boolean functions to (a certain class of) infinite trees.
Our approach relies on approximations of the infinite trees by sequences of growing trees
in the local topology around the root, which we now introduce.

For a (rooted) tree $t$ we define the truncation at height $h\ge 0$ as the 
subtree induced on the nodes at distance at most $h$ from the root, and denote it by $t^h$. 
A sequence of rooted trees $(t_n)_{n\ge 1}$ is said to {\bf converge locally} to a tree $t$ 
if for every integer $h\ge 1$ there exists $n_h$ large enough that for all $n\ge n_h$, 
the truncations $t_n^h$ are isomorphic to $t^h$. (Note that the limit tree $t$ does not 
have to be infinite.)
For a tree $t$, we define its number of {\bf ends} as 
the number of disjoint paths to infinity (more precisely, the limit of the number of 
connected components of the forest induced by $t$ on the set of nodes at distance at 
least $h\ge 1$, as $h\to \infty$; an end may also be defined as an equivalence class 
of infinite paths, where two paths are equivalent if their symmetric difference is finite).
When an infinite tree $t$ has a single end, the unique infinite simple path is called the 
{\bf spine}.

\vspace{\baselineskip}
The idea is to identify the distribution on Boolean functions encoded by an infinite tree 
as the limit distribution of the functions encoded by approximating sequences of growing trees. 
So given a sequence of trees $(t_n)_{n\geq 1}$ locally converging to a tree $t$, one is 
led to showing that the Boolean function $\of[\hat t_n]$ converges in distribution, as $n$ tends to infinity,
to a limit that only depends on $t$. The two following continuity theorems prove that this is the 
case when the limiting tree has no leaf or when it has finitely many ends.
Examples of such sequences of trees will be given later.

\vspace{\baselineskip}
\noindent\textsc{Trees without leaves.} Our first theorem deals with trees without leaves,
that yield degenerate distributions on Boolean functions:

\begin{thm}\label{thm:loc-tri}
(a) Suppose that $(t_{k,n})_{n\ge 1}$ is a sequence of trees converging locally to an infinite tree
without leaves. Then, as $n\to\infty$,
$$P_k[t_{k,n}](\True)=P_k[t_{k,n}](\False) \to \frac 1 2.$$ 
(b) Conversely, if $(t_{k,n})_{n\ge 1}$ converges locally to a tree with at least one leaf, then 
there exists a function $f\not \in \{\True,\False\}$ such that
$$\liminf_{n\to\infty} P_k[t_{k,n}](f)>0.$$
\end{thm}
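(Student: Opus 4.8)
The plan is to treat the two parts separately: part (a) via a symmetry reduction together with a two‑point contraction estimate, and part (b) via an explicit positive‑probability forcing event. For part (a), I would first reduce to showing $P_k[t_{k,n}](\{\True,\False\})\to 1$: the involution on labellings that swaps $\land\leftrightarrow\lor$ at every internal node and negates every leaf‑literal is measure preserving, and by De Morgan's laws it sends $\of[\hat t\,]$ to its negation; hence $P_k[t](\True)=P_k[t](\False)$ for \emph{every} tree $t$, so the two quantities in (a) are equal and it is enough that their sum tends to $1$. The heart is the lemma: if $s$ is any tree in which every node at depth $<h$ has at least two children, then $\Prob{\of[\hat s\,]\ \text{non-constant}}\le 4^k\rho_h$ where $\rho_h\to 0$. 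To prove it, fix distinct assignments $x\ne y$ and set $\rho_h:=\sup_s\Prob{\of[\hat s\,](x)=\True,\ \of[\hat s\,](y)=\False}$ over trees $s$ of that kind. Two elementary inputs are used: $\Prob{\of[\hat s\,](z)=\True}=\tfrac12$ for every tree $s$ and assignment $z$ (a one‑line induction on the height, since $\tfrac12(1-2^{-m})+\tfrac12\,2^{-m}=\tfrac12$), and independence of the subtrees hanging at the root. Conditioning on the root's label and its $d\ge 2$ children then yields $\rho_h\le\rho_{h-1}(1-\rho_{h-1})$ --- the worst case being a binary root --- with $\rho_0\le\tfrac12$; since $t\mapsto t(1-t)$ is increasing on $[0,\tfrac12]$ and lies strictly below the diagonal on $(0,\tfrac12]$, this forces $\rho_h\to 0$. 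A union bound over the fewer than $4^k$ ordered pairs $(x,y)$ converts this into the non‑constancy bound. Finally, since the local limit $t$ has no leaves, every node of $t^h$ at depth $<h$ has $\ge 2$ children, hence so does $t_{k,n}$ once $t_{k,n}^h\cong t^h$; applying the lemma with $s=t_{k,n}$ gives $\limsup_n\Prob{\of[\hat t_{k,n}\,]\ \text{non-constant}}\le 4^k\rho_h$ for all $h$, so the limit is $0$.

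For part (b), since $t$ has a leaf at finite depth, pick $h$ with $t^h$ having a leaf $v$ at depth $d<h$; the case $d=0$ is immediate ($t_{k,n}$ is a single node for large $n$, so $f=x_1$ works), so assume $d\ge 1$, let $p$ be the parent of $v$ and $u_0,\dots,u_{d-1}=p$ the root‑to‑$p$ path. For $n\ge n_h$ the radius‑$h$ ball about the root of $t_{k,n}$ equals the fixed tree $t^h$, so this structure is stable. I would build an event $E_n$ depending only on the finitely many labels carried by $t^h$ --- hence of probability $\ge\epsilon_0>0$ uniformly in $n\ge n_h$ --- on which $\of[\hat t_{k,n}\,]$ is forced to equal the fixed non‑constant function $x_1$. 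Concretely, $E_n$ demands: the literal at $v$ is $x_1$; each $u_i$ is labelled $\lor$; and each subtree of $t_{k,n}$ hanging off the path or off $p$ (other than the branch toward $v$) evaluates to a function that is $\le x_1$ pointwise. On $E_n$, climbing the path bottom‑up gives $\of[\hat p\,]=x_1\vee(\text{something}\le x_1)=x_1$ and then $\of[\hat u_i\,]=x_1$ for every $i$, so $\of[\hat t_{k,n}\,]=x_1$. A ``small'' side‑subtree is visible inside $t^h$, so the constraint on it is just a constraint on finitely many labels (a single leaf must carry $x_1$); a ``large'' one converges locally to its own local limit, and if that limit has no leaves then part (a) makes it constant with probability $\to 1$ and equal to $\False\le x_1$ with probability $\to\tfrac12$, so the constraint has probability bounded away from $0$ uniformly in $n$.

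The main obstacle is exactly this last point in (b): one needs a lower bound, uniform in $n$, on the probability that \emph{every} irrelevant subtree takes a value $\le x_1$ (or otherwise harmless for the forcing). Subtrees with leafless local limits are handled by part (a); the remaining case --- subtrees whose local limits again carry leaves, as when $t$ is an infinite spine bearing one leaf at every level, so that $\of[\hat t_{k,n}\,]$ genuinely depends on arbitrarily deep structure no matter what --- appears to require an auxiliary inductive estimate of the form $\inf_s P_k[s](\{g:\ g\le x_1\ \text{pointwise}\})>0$ over all trees $s$, i.e.\ that a randomly labelled and/or tree collapses to a ``simple'' function with probability bounded away from $0$. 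Granting that, $E_n$ has the desired uniform positive probability and one concludes $\liminf_n P_k[t_{k,n}](x_1)>0$; note that part (a) is thus invoked twice inside the proof of (b) --- to absorb the deep side‑subtrees, and to articulate the ``leaves near the root vs.\ none'' dichotomy.
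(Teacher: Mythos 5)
Your part (a) is essentially the paper's own argument. Your two--point quantity $\rho_h$ is the paper's $S^{10}_\sigma$, your recursion $\rho_h\le \rho_{h-1}(1-\rho_{h-1})$ is identical to the paper's $S^{10}_\sigma\le \tfrac14-(\tfrac12-S^{10}_{\sigma-1})^2$ (the right-hand sides coincide, and the ``binary root is worst'' claim is exactly the paper's supremum over $r\ge 2$), the symmetry $P_k[t](\True)=P_k[t](\False)$ is used in the same way, and the union bound over the $<4^k$ ordered pairs of assignments is the paper's final step. This half is complete and correct.

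Part (b) contains a genuine gap, which you flag yourself. Your forcing event $E_n$ requires \emph{every} subtree hanging off the root-to-$v$ path to compute a function $\le x_1$, and for the side subtrees that grow with $n$ and whose own local limits still contain leaves you need the uniform estimate $\inf_s P_k[s](\{g: g\le x_1\})>0$ over all trees $s$ --- which you only ``grant''. Without it the argument does not close: such a subtree need not stabilise and need not become constant, so a priori it could compute a function $\not\le x_1$ with probability tending to $1$. (The missing lemma is in fact true and derivable from what you already have: for a tree $s$ with a leaf at depth $\sigma$, forcing the $\sigma$ connectives on a shortest root-to-leaf path to be $\land$ and that leaf to carry $x_1$ gives $P_k[s](\{g\le x_1\})\ge 2^{-\sigma}(2k)^{-1}$, while for large $\sigma$ your part (a) estimate forces $P_k[s](\False)\ge \tfrac14$; but this has to be written out.) The paper's proof of (b) sidesteps the difficulty entirely: it conditions on \emph{all} labels except the connectives $\diamond_0,\dots,\diamond_{\sigma-1}$ on the path from the root to a nearest leaf $\ell$, and observes that whatever functions $g_0,\dots,g_{\sigma-1}$ the side forests compute, there is a deterministic choice of these connectives (take $\land$ when $g_j\ne\False$, $\lor$ when $g_j=\False$) under which $\of[\hat t]$ depends on the literal at $\ell$; that choice has probability $2^{-\sigma}$, yielding $\bbP(\of[\hat t_n]\notin\{\True,\False\})\ge 2^{-\sigma}$ with no control whatsoever on the side subtrees. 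If you want to keep your version --- which does have the merit of charging the explicit function $x_1$, hence giving the $\liminf$ for a named $f$ directly --- you must prove the auxiliary infimum lemma.
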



Theorem~\ref{thm:loc-tri} implies the following straightforward corollary, which 
settles a conjecture of \citet{ChGaMa2015a}.
For $\sigma\in \N$, let $\bbT_\sigma$ denote the collection of rooted trees such that the 
leaf closest to the root lies at distance at least $\sigma$.

\begin{cor}\label{cor:random-loc-tri}
Let $(T_n)_{n\ge 1}$ be a sequence of random trees. Then
$$
\bbP(\of[\hat T_n]\in \{\True,\False\}) \to 1 
\quad \text{if and only if} \quad
\inf_{\sigma \ge 1} \lim_{n\to \infty}\bbP(T_n \in \bbT_{\sigma}) =1
$$
\end{cor}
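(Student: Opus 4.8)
The plan is to deduce Corollary~\ref{cor:random-loc-tri} from Theorem~\ref{thm:loc-tri} by a standard subsequence/compactness argument, since the corollary is precisely the ``random-tree'' analogue of the deterministic dichotomy in the theorem. Throughout, note that $\of[\hat T_n] \in \{\True,\False\}$ exactly means $\of[\hat T_n] = \True$ or $\of[\hat T_n] = \False$, and that conditionally on $T_n = t$ this event has probability $P_k[t](\True) + P_k[t](\False)$; hence $\bbP(\of[\hat T_n] \in \{\True,\False\}) = \E{P_k[T_n](\True) + P_k[T_n](\False)}$. Also observe the key monotonicity/consistency fact: the events $\{T_n \in \bbT_\sigma\}$ are decreasing in $\sigma$, so $\inf_{\sigma \ge 1}\lim_{n}\bbP(T_n\in\bbT_\sigma)$ is a genuine limit in $\sigma$ of the quantities $\ell_\sigma := \lim_n \bbP(T_n \in \bbT_\sigma)$, assuming these limits exist; if they do not exist one works along subsequences (see below).

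For the ``if'' direction, suppose $\inf_\sigma \lim_n \bbP(T_n\in\bbT_\sigma) = 1$, i.e.\ for every $\varepsilon>0$ there is $\sigma$ with $\liminf_n \bbP(T_n\in\bbT_\sigma) \ge 1-\varepsilon$. On the event $\{T_n\in\bbT_\sigma\}$ the truncation $T_n^{\sigma-1}$ is a leafless tree (all leaves are at distance $\ge\sigma$), so every internal node of $T_n$ at height $<\sigma-1$ has $\ge 2$ children all of which are themselves internal, and a short direct computation — the same one underlying Theorem~\ref{thm:loc-tri}(a) — shows that, conditionally on $T_n^{\sigma-1}$ being leafless, $P_k[T_n](\True)+P_k[T_n](\False) \to 1$ as $\sigma\to\infty$, uniformly over such trees, at a rate depending only on $\sigma$ (roughly $1 - O(2^{-c\sigma})$). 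Splitting the expectation $\E{P_k[T_n](\True)+P_k[T_n](\False)}$ over $\{T_n\in\bbT_\sigma\}$ and its complement, bounding the first part from below using this uniform estimate and the second part trivially by $0$, and letting first $n\to\infty$ and then $\varepsilon\to 0$ (choosing $\sigma$ accordingly), gives $\bbP(\of[\hat T_n]\in\{\True,\False\})\to 1$.

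For the ``only if'' direction I argue by contraposition: assume $\inf_\sigma \lim_n \bbP(T_n\in\bbT_\sigma) < 1$, so there is $\delta>0$ and, for every $\sigma$, a subsequence along which $\bbP(T_n\notin\bbT_\sigma)\ge\delta$, i.e.\ with probability $\ge\delta$ the tree $T_n$ has a leaf at distance $<\sigma$ from the root. By a diagonal extraction over $\sigma$ and by finiteness of the set of rooted trees of bounded height and bounded degree — if degrees are not bounded one must be slightly more careful, but the event ``$T_n$ has a leaf within distance $\sigma$'' still localises — one extracts a further subsequence $(n_j)$ and a fixed finite rooted tree $s$ containing at least one leaf such that $\liminf_j \bbP(T_{n_j}^{|s|} = s) > 0$, where $|s|$ denotes the height of $s$. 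Conditionally on the event $\{T_{n_j}^{|s|}=s\}$, the sequence $(T_{n_j})_j$ converges locally (along a further subsequence, by compactness of the space of rooted trees in the local topology) to some tree having $s$ as an initial truncation, hence having a leaf; applying Theorem~\ref{thm:loc-tri}(b) to this conditioned sequence produces a non-constant function $f$ with $\liminf_j P_k[T_{n_j}](f) \ge$ (const)$>0$ on this event, whence $\liminf_j \bbP(\of[\hat T_{n_j}] = f) > 0$ and so $\bbP(\of[\hat T_n]\in\{\True,\False\})\not\to 1$.

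The main obstacle is the bookkeeping in the ``only if'' direction: Theorem~\ref{thm:loc-tri} is stated for \emph{deterministic} sequences converging locally, so one must pass to (random) subsequences and extract a deterministic local limit tree on which to apply part~(b), and then transfer the resulting lower bound on $P_k[\cdot](f)$ back through the conditioning to a lower bound on $\bbP(\of[\hat T_n]=f)$. The two technical points to handle with care are (i) that the function $f$ produced by part~(b) a priori depends on the limiting tree, which is fine since we only extracted one such limit, and (ii) unbounded degrees, which prevent naive finiteness arguments — but since $\bbT_\sigma$ only constrains the location of the nearest leaf, the relevant events are still determined by a bounded-height neighbourhood of the root, and one can either additionally localise the degree of the root (conditioning on it being in a large finite range, which happens with probability close to $1$) or simply observe that the argument of Theorem~\ref{thm:loc-tri}(b) itself only uses the existence of one leaf at bounded depth. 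Modulo this, everything is routine.
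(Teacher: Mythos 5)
Your ``if'' direction is essentially the paper's argument: Lemma~\ref{lem:pre} together with the union bound in the proof of Theorem~\ref{thm:loc-tri}(a) gives $\bbP(\of[\hat t]\notin\{\True,\False\})\le 2^{2k+2}/\sigma$ \emph{uniformly} over all deterministic $t\in\bbT_\sigma$ (for $\sigma\ge\sigma_0(k)$), and conditioning on $T_n$ and splitting the expectation over $\{T_n\in\bbT_\sigma\}$ and its complement does the rest. One correction: the uniform rate is not $1-O(2^{-c\sigma})$ but only $1-O(2^{2k}/\sigma)$ --- the recursion $u_{\sigma+1}=u_\sigma-u_\sigma^2$ behind Lemma~\ref{lem:pre} decays like $1/\sigma$, not exponentially. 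This does not affect the qualitative conclusion.

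The ``only if'' direction as you primarily wrote it has a genuine gap. With no tightness assumption on the offspring numbers, there need not exist \emph{any} fixed finite rooted tree $s$ with $\liminf_j\bbP(T_{n_j}^{|s|}=s)>0$: the conditional law of $T_n$ given $\{T_n\notin\bbT_\sigma\}$ could put all its mass on trees whose root degree tends to infinity with $n$, in which case the mass spreads over infinitely many shapes and the diagonal extraction produces nothing. Your proposed remedy --- conditioning the root degree into ``a large finite range, which happens with probability close to $1$'' --- assumes exactly the tightness that is missing. The correct fix is the one you mention only in passing at the end: the proof of Theorem~\ref{thm:loc-tri}(b), namely Lemma~\ref{lem:lem2}, gives $\bbP(\of[\hat t]\notin\{\True,\False\})\ge 2^{-m}$ uniformly over \emph{all} trees of saturation level $m$, with no restriction on degrees or shapes. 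Since $T_n\notin\bbT_\sigma$ means $T_n\in\cA_m$ for some $m\le\sigma-1$, one gets directly
$$\bbP(\of[\hat T_n]\notin\{\True,\False\})\ \ge\ 2^{-(\sigma-1)}\,\bbP(T_n\notin\bbT_\sigma),$$
which stays bounded away from $0$ along a subsequence whenever $\inf_\sigma\lim_n\bbP(T_n\in\bbT_\sigma)<1$. No local limit, no compactness, and no extraction of a single function $f$ is needed (for the corollary it suffices that the event $\{\of[\hat T_n]\notin\{\True,\False\}\}$ keeps positive probability). I recommend discarding the extraction argument entirely and running both directions through the two uniform quantitative bounds, which is what makes the corollary ``straightforward'' from Theorem~\ref{thm:loc-tri}.
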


Some special cases of Corollary~\ref{cor:random-loc-tri} have already been proved in the literature. 
If, for all $n\geq 0$, $T_n$ is almost surely equal to the balanced binary tree of height $n$ (i.e. the 
unique binary tree which has $2^n$ leaves, all lying at height $n$), then, it is proven by \citet{FoGaGe2009}
that $\bbP(\of[\hat T_n]=\{\True,\False\})\to~1$. 

The case when $T_n$ is a random binary search tree of 
size $n$, is treated by~\citet{ChGaMa2011a}.
Recall that a binary search tree of size $n$ is the rooted binary tree constructed as follows:
Given a list of distinct real numbers $\pi=(\pi_1,\pi_2,\dots, \pi_n)$, the root is labelled 
with $\pi_1$, and the tree is recursively obtained by repeating the construction with the lists contaning 
the $\pi_i$ that are smaller and larger than $\pi_1$ for the left and right subtrees of the root, 
respectively. The random binary search tree is then the binary search tree
obtained when $\pi$ is a sequence of i.i.d.\ random variables, for example uniformly distributed in $(0,1)$ 
(see e.g.~\cite{Knuth1973b}). 
\citet{ChGaMa2011a}'s result is a direct consequence of Corollary~\ref{cor:random-loc-tri} 
since \citet{Devroye1986} proved that the \emph{fill-up} or \emph{saturation level} 
(the height of the leaf closest to 
the root) of the random binary search tree of size $n$ is asymptotically equivalent to $c \log n$ 
in probability, where $c=.3733\dots$ is the unique solution smaller than one of $c\log((2e)/c)=1$
(see also \cite{Pittel1984}).
The same holds when the underlying tree shape is any of the classical random search trees 
based on the divide-and-conquer paradigm, for instance quad-trees or $k$-d trees built 
from uniformly random point sets, tries \cite{Mahmoud1992a}, or more generally any example 
that fits in the framework of \cite{BrDeMcSa2008}.

\vspace{\baselineskip}
\noindent\textsc{Trees with finitely many ends.}\ 
Our second continuity result concerns trees with finitely many ends for which 
the distribution on Boolean functions is non-degenerate.
The first theorem below ensures convergence to a limiting probability distribution when $n$ goes to infinity.
The fact that this distribution is non-degenerate is the next main result, discussed in Section~\ref{sub:discussion}.

\begin{thm}\label{thm:loc-lim}
Fix $k\ge 1$. 
Suppose that $(t_{k,n})_{n\ge 1}$ is a sequence of trees which converges locally to $t_k$, and 
that $t_k$ has only finitely many ends. Then, there exists a probability distribution $P_k[t_k]$
such that for every Boolean function $f$ one has, as $n\to\infty$,
$$P_k[t_{k,n}](f) \to P_k[t_k](f).$$
\end{thm}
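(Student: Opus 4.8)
The plan is to exploit the finite-ends hypothesis to reduce the computation of $P_k[t_{k,n}](f)$ to a local computation near the root, which stabilizes along the sequence. First I would fix the limiting tree $t_k$ with ends $e_1,\dots,e_m$; since $t_k$ has only finitely many ends, there is a height $H_0$ above which the truncation $t_k \setminus t_k^{H_0}$ splits into exactly $m$ infinite connected components, each a one-ended tree (a spine with finite trees hanging off it) together with possibly finitely many finite components. The key observation is a \emph{pruning} or \emph{absorption} lemma: for any one-ended infinite tree $s$ with spine $v_0, v_1, v_2, \dots$, the randomly $k$-labelled version $\hat s$ represents a constant function ($\True$ or $\False$) with probability~$1$. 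Indeed, reading the spine from infinity downward, at each spine node $v_j$ the label is $\land$ or $\lor$ with probability $1/2$; a Borel--Cantelli argument shows that infinitely often a spine node is, say, an $\land$ whose hanging subtree (or the already-resolved part below) evaluates to $\False$ under no constraint on the remaining variables — more carefully, one shows that the function computed at $v_j$ as $j\to\infty$ is almost surely eventually constant, because the event that the subtree at $v_j$ forces the output is bounded below by a positive constant at each level and these events, while not independent, can be coupled with an i.i.d.\ sequence. This is essentially the mechanism behind the degenerate case in Theorem~\ref{thm:loc-tri}(a), localized to a single spine.

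Granting the pruning lemma, the approach is as follows. For $\varepsilon > 0$, choose $H$ large enough that, with probability at least $1-\varepsilon$, each of the $m$ one-ended components of $t_k$ above height $H$ already evaluates to a constant function when labelled randomly — here I use that each such component, restricted to its first $N$ spine levels, forces a constant with probability tending to~$1$ as $N\to\infty$, uniformly enough to pick a single $H$ working for all $m$ components simultaneously. Now for $n \ge n_H$ (from local convergence), $t_{k,n}^H \cong t_k^H$, so the top part of $\hat t_{k,n}$ is distributed exactly as the top part of $\hat t_k$. Condition on the labels in $t_{k,n}^H$ and on the event that every subtree of $t_{k,n}$ rooted at a node of height $H$ which lies ``under an end of $t_k$'' evaluates to a constant; on this event, $\of[\hat t_{k,n}]$ is a deterministic function of the labels in $t_{k,n}^H$ and of which constants ($\True$/$\False$) those finitely many subtrees produced — the finite subtrees of $t_k^H$ that do \emph{not} sit under an end contribute genuine (non-constant) functions, but there are finitely many of them and their law is fixed once $n \ge n_H$. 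Hence $P_k[t_{k,n}](f)$ differs by at most $\varepsilon$ from a quantity depending only on $t_k^H$ and on the (fixed, $n$-independent) laws of those finitely many subtree-functions and spine-constants; letting $n\to\infty$ and then $\varepsilon\to 0$ gives a Cauchy sequence in $n$, and the limit $P_k[t_k](f)$ is well-defined and independent of the approximating sequence. That it is a probability distribution follows since each $P_k[t_{k,n}]$ is, and the convergence is pointwise over the countable set of $k$-variable Boolean functions.

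The main obstacle I expect is the pruning lemma: showing that a random spine of $\land/\lor$'s with finite hanging subtrees almost surely resolves to a constant, \emph{uniformly} enough in the truncation level to extract a single height $H$ that works up to error $\varepsilon$. The subtlety is that the hanging subtrees can be small (a single leaf, giving only a literal), so at a given spine node the probability of ``forcing'' the output can be as small as $1/(2k)$ or so, but it is bounded below by a positive constant $p_k>0$ \emph{at every level independently of the past}, because conditionally on everything below, an $\land$ node with a hanging leaf labelled by a false literal forces $\False$. Dominating the ``not yet forced'' event by a geometric random variable with parameter $p_k$ then yields both the a.s.\ resolution and the required quantitative uniform tail bound. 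A secondary, more bookkeeping-type difficulty is organizing the decomposition of $t_k$ at height $H$ into (i) finitely many subtrees sitting under the $m$ ends, (ii) finitely many finite subtrees not under any end, and checking that local convergence really does pin down the joint law of the labels above $H$ together with the functions produced by the finite subtrees in~(ii); this is routine but must be stated carefully so that the ``$n$-independent for $n\ge n_H$'' claim is airtight.
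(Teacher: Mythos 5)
There is a genuine gap, and it sits exactly where you predicted: the ``pruning lemma'' is false. A one-ended infinite tree with finite subtrees hanging off its spine does \emph{not} compute a constant function almost surely. Take the spine $v_0,v_1,\dots$ with a single leaf hanging at each $v_j$, so that $F_j=\ell_j\diamond_j F_{j+1}$ with $\ell_j$ a uniform literal. On the event $\{\ell_0=x_1,\ \diamond_0=\land,\ \ell_1=x_1,\ \diamond_1=\lor\}$, which has positive probability, one gets $F_0=x_1\land(x_1\lor F_2)=x_1$ no matter what $F_2$ is: the tree computes the non-constant function $x_1$ with positive probability. More globally, if one-ended components were a.s.\ constant, the limit law in the non-degenerate case would be supported on $\{\True,\False\}$, contradicting Theorem~\ref{thm:loc-tri}(b) and the whole point of Theorems~\ref{prop:GW} and~\ref{prop:alpha_tree}. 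The step in your justification that fails is ``an $\land$ node with a hanging leaf labelled by a false literal forces $\False$'': a leaf labelled $x_1$ is not the constant $\False$; an $\land$ at $v_j$ with hanging leaf $x_1$ pins down $F_j$ only on the half-space $\{x_1=0\}$, while on $\{x_1=1\}$ the value still depends on everything below. Consequently your quantitative step is also unattainable: in the stationary (i.i.d.\ hanging forests) case the probability that the component rooted at height $H$ computes a constant does not depend on $H$ and is in fact $\Theta(1/k)$, so no choice of $H$ makes it exceed $1-\varepsilon$.

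What is true, and what the argument needs, is a \emph{stabilization} statement rather than a constancy statement: almost surely there is a finite random level $h$ such that the function computed by the truncation at level $h$ is unaffected by any completion of the tree below $h$ (so in particular $\of[\hat t^{[i]}]=\of[\hat t^{[h]}]$ for all $i\ge h$). Your own mechanism, pushed through correctly, only yields this weaker conclusion: once some $F_{v_j}$ is pinned down, $F_{v_0}$ becomes a fixed --- generally non-constant --- function of the finitely many labels above level $j$. To prove stabilization one must pin down the value at \emph{every} point of $\{0,1\}^k$, and a single forcing level cannot do that; the trick is to find two spine levels $i<j$ carrying opposite connectives whose hanging forests compute the same multiset of Boolean functions: the $\land$ at level $i$ fixes the output on $\{g=0\}$ and the $\lor$ at level $j$ fixes it on $\{g=1\}$, where $g$ is the common function. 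A pigeonhole argument over blocks of $2^{2^{2^k}}+1$ consecutive levels, combined with the independence of the spine connectives, shows such a pair occurs after a geometric number of blocks, so $h$ is a.s.\ finite. With stabilization in place, the rest of your architecture (match $t_{k,n}^h$ with $t_k^h$ via local convergence, note that $\of[\hat t_{k,n}]$ is then a completion below level $h$ and hence computes the same function in distribution) goes through; your bookkeeping of the finitely many finite components not under an end is fine.
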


This convergence result is relatively easy to apply to a whole range of examples:
It is known since \citet{Grimmett1980} that the family trees of critical Galton--Watson trees
conditioned on the total progeny of increasing sizes converge locally to an infinite tree with a 
single end (see also \citet{AlSt2003}), so that the convergence results of \citet{LeSa1997} 
and \citet{ChFlGaGi2004} are straightforward consequences of Theorem~\ref{thm:loc-lim}.
We also provide in Section~\ref{sec:examples} novel examples of applications 
to random unordered trees (see \citet{MaMi2011a}), and other random trees arising from fragmentation 
processes (see \citet{HaMi2012a}).

\subsection{Trees with a unique end: properties of the limit distribution.}\label{sub:discussion}
In the non-degenerate case, we describe further the behaviour of the limit distribution.
For technical reasons, we restrict ourselves to random trees whose local limit has a unique end, and
under further (but reasonable) assumptions on the limit tree of the family $(T_n)_{n\ge 1}$, we are 
then able to prove the equivalent of~\citet{Kozik2008a}'s result (see Equation~\eqref{eq:kozik-theta}).
As in previous approaches by analytic combinatorics, the proof will take two steps: 
first estimate the probability of the two constant functions ($\True$ and $\False$) 
and then deduce from it the probability of a general Boolean function. 
More precisely, we are able to derive the asymptotic leading term of $P_k[T_n](f)$ when $k$ tends to infinity, 
for any $k_0$-variable Boolean function~$f$ (for any fixed integer $k_0$), 
in terms of its complexity.

Our main result in this direction needs further definitions before being properly stated, and we 
thus postpone its statement to Section~\ref{sec:bounds} (cf. Theorem~\ref{thm:theta}). However, 
it reads informally as follows: 
if the family $(T_n)_{n\geq 1}$ of random trees converges locally to an infinite 
spine on which are hung some i.i.d. forests, and if we can reasonably control the size of these forests,
then, for any integer $k_0$, for any $k_0$-variable Boolean function $f$, asymptotically as $k\to+\infty$,
\[\lim_{n\to+\infty} P_k[T_n](f) = \Theta\left(\frac1{k^{L(f)+1}}\right).\]
We apply this result to two main examples: 
critical Galton--Watson trees and Ford's alpha tree:
the following two theorems are proved in Sections~\ref{sub:ex_GW} and~\ref{sub:ex_alpha}.
\begin{thm}\label{prop:GW}
Let $(T_n)_{n\geq 1}$ be a critical Galton--Watson process of offspring distribution $\xi$ 
for which there exists a constant $a>0$ such that $\Ec{e^{a\xi}}<\infty$,
and such that $\mathbb P(\xi=1)=0$.
Then, for any integer $k_0$, for any $k_0$-variable Boolean function $f$,
asymptotically as $k\to+\infty$,
\[\lim_{n\to+\infty} P_k[T_n](f) = \Theta\left(\frac1{k^{L(f)+1}}\right).\]
\end{thm}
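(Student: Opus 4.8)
## Proof proposal

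The plan is to verify that a critical Galton--Watson tree with exponential moments and $\mathbb P(\xi=1)=0$ falls into the hypotheses of the general theorem (Theorem~\ref{thm:theta}), and then simply invoke that theorem. The argument therefore has two distinct parts: (i) identifying the local limit of $(T_n)_{n\ge 1}$ and checking it has the right structure (an infinite spine with i.i.d.\ forests hung from it), and (ii) checking the quantitative control on the sizes of those forests that Theorem~\ref{thm:theta} requires.

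For part (i), I would recall the classical result of \citet{Grimmett1980} (see also \citet{AlSt2003}): a critical Galton--Watson tree of offspring law $\xi$ conditioned on having total progeny $n$ converges locally, as $n\to\infty$, to Kesten's tree $T_\infty$. This limit tree has a single end: it consists of a semi-infinite spine $v_0, v_1, v_2, \dots$, where each spine vertex $v_i$ has a size-biased number of children $\hat\xi_i$ (one of which, chosen uniformly, is the next spine vertex $v_{i+1}$), and off each spine vertex hang $\hat\xi_i - 1$ independent unconditioned Galton--Watson trees with offspring law $\xi$. Thus the forest $F_i$ hung at the $i$-th spine vertex is an i.i.d.\ sequence (indexed by $i$) of finite random forests — exactly the picture described informally before Theorem~\ref{thm:theta}. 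One subtlety to address: the hypothesis $\mathbb P(\xi=1)=0$ must be propagated to guarantee that $T_\infty$ (and each conditioned tree $T_n$) has no vertex of arity one, so that the $k$-labelling procedure and the function $\of[\cdot]$ are well-defined; since each hanging subtree is an unconditioned GW tree with this offspring law and the spine vertices have $\hat\xi_i\ge 2$ children (size-biasing of a law with no mass at $1$ and mean $1$ still puts no mass at $1$, and in fact $\hat\xi_i\ge 2$ a.s. unless $\xi\equiv$ const), this holds.

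For part (ii), the work is to check the integrability/tail condition on the forest sizes demanded by Theorem~\ref{thm:theta}. The size of a single unconditioned critical GW tree with offspring law $\xi$ satisfying $\mathbb E[e^{a\xi}]<\infty$ has a power-law tail with exponent $3/2$ (by the classical Otter--Dwass / local limit estimates for GW total progeny under a finite-variance, or here much stronger exponential-moment, assumption), and the number of trees in $F_i$ is $\hat\xi_i - 1$, whose law is the size-biased $\xi$ minus one — again with exponential moments. So $|F_i|$ has, say, a finite moment of every order up to the power-law exponent, and more than enough to satisfy whatever polynomial-moment or summability hypothesis appears in Theorem~\ref{thm:theta} (the exponential moment on $\xi$ is presumably there precisely to make this margin comfortable). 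I would state the relevant estimate on $\mathbb P(|F_i|\ge m)$ and on the moments of $|F_i|$, cite the standard GW tail references, and then feed these bounds into the hypotheses of Theorem~\ref{thm:theta}.

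The conclusion $\lim_{n\to\infty} P_k[T_n](f) = \Theta(k^{-L(f)-1})$ as $k\to\infty$ is then immediate from Theorem~\ref{thm:theta} (and the existence of the limit in $n$ is guaranteed by Theorem~\ref{thm:loc-lim}, since $T_\infty$ has a unique end). The main obstacle I anticipate is purely bookkeeping: matching the precise form of the quantitative hypothesis in Theorem~\ref{thm:theta} — which controls how large the hanging forests are allowed to be, and in what averaged sense — to the tail estimates available for conditioned Galton--Watson trees, and making sure the exponential-moment assumption is exactly what is needed (no more, no less). There is no genuinely new probabilistic difficulty here beyond correctly assembling the local-limit description and the GW size tails; the substance of the result lives entirely in Theorem~\ref{thm:theta}.
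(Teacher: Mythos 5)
There is a genuine gap in part (ii) of your plan, and it is not mere bookkeeping. The quantitative hypothesis of Theorem~\ref{thm:theta} that you need to verify is not a moment condition on the raw sizes of the forests $A_i$ hanging from the spine; it is condition $(ii)$, namely that $\Ec{\|\trim(\hat T_\infty)\|^m}<\infty$ for every $m$, i.e.\ that the \emph{trimmed, randomly labelled} tree has all moments finite (plus condition $(i)$ on $N_m(T_\infty)$, which you do not address but which follows easily by induction from the exponential moments of $\xi$ and $\hat\xi$). Your proposed route — controlling $|A_i|$ via the Otter--Dwass tail for the total progeny of a critical Galton--Watson tree — cannot work: that tail is $\p{|T|\ge n}\sim Cn^{-1/2}$, so the size of a single unconditioned critical GW tree has finite moments only of order strictly less than $\nicefrac12$ (in particular its mean is infinite, since criticality gives $\E{|T|}=\sum_{g\ge 0} 1=\infty$). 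So the hanging forests are nowhere near having "a finite moment of every order", and no polynomial-moment hypothesis on the unlabelled forest sizes can be met. The exponential moment assumption on $\xi$ is not there to give you a comfortable margin on $|A_i|$; it is used for something else entirely.

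The missing idea is that the integrability comes from the \emph{trimming} itself, i.e.\ from the random $k$-labelling, not from the geometry of the unlabelled tree. This is the content of Proposition~\ref{pro:bound-trimmed-GW} in the paper: one shows $\p{\|\trim(\hat T_\infty)\|\ge \varpi}\le \exp(-c\varpi/k^2)$ by comparing the trimmed tree to a two-type (black/white) branching process in which, each time a node has a leaf-child, all its siblings are deleted with probability $\nicefrac1{2k}$; the resulting mean offspring is $1-c/k<1$, so the pruned tree is \emph{subcritical}, and Cram\'er's theorem gives exponential tails for its total progeny (this is where $\Ec{e^{a\xi}}<\infty$ is actually used). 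One then assembles the spine contribution by showing that the number of consistent spine vertices is stochastically dominated by a geometric random variable. Without this subcritical-pruning argument the verification of hypothesis $(ii)$ fails, so your proof as written does not go through; part (i) of your proposal (identification of Kesten's tree, hypothesis $\mathtt{(H)}$, absence of unary nodes) is correct and matches the paper.
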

Note that the case of Catalan trees studied by Lefmann and Savick{\'y}~\cite{LeSa1997}, Chauvin et al.~\cite{ChFlGaGi2004} and Kozik~\cite{Kozik2008a}, is a particular case of Theorem~\ref{prop:GW}.

Ford's alpha model (see~\cite{Ford2005}) is defined as follows (cf. Figure~\ref{fig:alpha_tree}): 
$T_1^{\alpha}$ has a unique node to which is linked the root-edge. 
To build $T_{n+1}^{\alpha}$ from $T_n^{\alpha}$, 
weight each internal edge (the root-edge and any edge that is not linked to a leaf is an internal edge) by $\alpha$ 
and each external edge by $1-\alpha$. 
Pick at random an edge with probability proportional to these weights, 
add an internal node in the middle of this edge and link a new leaf to this new internal node. 
We call $T_n^{\alpha}$ the alpha tree of size $n$.
The model interpolates between the Catalan tree ($\alpha=\nicefrac12$) and the random binary search 
tree ($\alpha=0$).
It has been known since~\cite{HaMi2004a} that the expected height of the $n$-leaf alpha-tree behaves as $n^{\alpha}$, 
except for the extremal case $\alpha=0$ since the random binary search tree's height behaves as $\ln n$ (see~\cite{Devroye1986}).
This model thus permits to explore a whole range of {\it tree shapes}. 
The following theorem shows that only the case $\alpha=0$ corresponding to the random binary search tree 
induces a degenerate distribution on Boolean functions. 
Any other alpha tree verifies Equation~\eqref{eq:kozik-theta}:
\begin{thm}\label{prop:alpha_tree}
Let $(T_n^{\alpha})_{n\geq 1}$ be a sequence of alpha trees. 
Then, for all $\alpha\in(0,1]$,
for any integer $k_0$, for all $k_0$-variable Boolean function $f$,
as $k\to+\infty$,
\[\lim_{n\to+\infty} P_k[T^{\alpha}_n](f) = \Theta\left(\frac1{k^{L(f)+1}}\right).\]
\end{thm}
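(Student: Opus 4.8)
The plan is to deduce Theorem~\ref{prop:alpha_tree} from the general machinery developed for trees with a unique end, namely Theorem~\ref{thm:theta} (the informal version stated in Section~\ref{sub:discussion}), exactly as Theorem~\ref{prop:GW} is meant to follow from it. So the work reduces to checking that, for every fixed $\alpha\in(0,1]$, the sequence $(T_n^\alpha)_{n\ge 1}$ satisfies the hypotheses of that theorem: it converges locally (around the root) to an infinite tree with a unique end, i.e.\ an infinite spine on which are hung i.i.d.\ finite forests, and the size of those hanging forests is controlled well enough (an exponential moment bound, mirroring the $\Ec{e^{a\xi}}<\infty$ hypothesis of Theorem~\ref{prop:GW}). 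Once this local-limit description with the requisite tail bound is in place, Theorem~\ref{thm:theta} applies verbatim and gives $\lim_{n\to\infty}P_k[T_n^\alpha](f)=\Theta(k^{-(L(f)+1)})$ as $k\to+\infty$ for every fixed $k_0$-variable $f$, which is the claim.

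First I would identify the local limit of $T_n^\alpha$. The standard approach is to follow the growth procedure along the ancestral line of a tagged leaf (or, better, to directly analyse the subtree near the root-edge): at each step $n\to n+1$ one inserts a node by splitting an edge chosen with probability proportional to the $\alpha$/$(1-\alpha)$ weights, so the root-edge is repeatedly subdivided and branches off finite ``bushes''. One shows that, almost surely, for each fixed height $h$ the truncation $(T_n^\alpha)^h$ stabilises, and the limit is an infinite path (the spine) carrying, at its successive internal vertices, independent copies of a fixed random finite forest. The key computation is to identify the offspring/forest distribution along the spine: because of the $\alpha$-linear preferential-attachment dynamics, the probability that a given spinal edge ever gets resubdivided, and the law of the bush that eventually hangs there, can be read off from a Pólya-urn or martingale argument — this is precisely the kind of description \cite{HaMi2004a,Ford2005} provide, and it is where the restriction $\alpha>0$ enters (for $\alpha=0$ the spine is ``pushed to infinity'', there is no leaf near the root, and Theorem~\ref{thm:loc-tri} applies instead, giving the degenerate distribution, consistent with \cite{ChGaMa2011a}).

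Second, I would establish the tail control on the hanging forests required by Theorem~\ref{thm:theta}. Concretely, the subtree hung at the $j$-th spinal vertex has a size $S_j$, the $S_j$ are i.i.d., and one needs something like $\Ec{e^{aS_1}}<\infty$ for some $a>0$ (or whatever the precise hypothesis in Theorem~\ref{thm:theta} is). For Ford's alpha model the bush sizes are governed by a classical urn with linear weights, so $S_1$ has geometric-type tails with parameter depending on $\alpha$; an explicit generating-function or urn computation yields the exponential moment for every $\alpha\in(0,1]$. Finally I would simply invoke Theorem~\ref{thm:loc-lim} to get existence of the limit $P_k[T^\alpha] := \lim_{n}P_k[T_n^\alpha]$ (the limit tree has a single end, hence finitely many), and Theorem~\ref{thm:theta} to get the $\Theta(k^{-(L(f)+1)})$ asymptotics, which completes the proof.

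The main obstacle I anticipate is not the abstract transfer but the concrete identification of the local limit and especially the verification of the exponential-moment tail for the hanging forests: one must carefully set up the right urn/martingale bookkeeping for the $\alpha$-dynamics along the spine, show the almost-sure stabilisation of truncations (rather than mere convergence in distribution), and check that the resulting i.i.d.\ forest law has light enough tails uniformly — a point that degenerates exactly as $\alpha\to 0$, so the estimates must be explicit enough to see that they survive for every $\alpha>0$ while genuinely breaking at $\alpha=0$. The $k$-uniformity is then automatic since all of this is about the fixed random tree shape, with $k$ entering only through the generic Theorem~\ref{thm:theta}.
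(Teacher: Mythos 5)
Your overall strategy --- reduce to Theorem~\ref{thm:theta} by exhibiting the local limit as a spine with i.i.d.\ hanging forests and then checking the moment hypotheses --- is indeed the paper's top-level plan. But your second step rests on a false premise, and this is exactly where the real work of the proof lies. The hanging subtrees along the spine of the local limit of the alpha tree have sizes $N_i$ with law $\p{N_1=k}=\frac{\alpha\Gamma(k-\alpha)}{k!\Gamma(1-\alpha)}\sim \frac{\alpha}{\Gamma(1-\alpha)}k^{-(1+\alpha)}$, i.e.\ a \emph{polynomial} tail of index $\alpha\in(0,1]$; in particular $\E{N_1}=+\infty$, and there is no hope of an exponential (or even first) moment for the raw bush sizes. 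The urn/martingale computation you propose would therefore not deliver what you claim, and the analogy with the $\Ec{e^{a\xi}}<\infty$ hypothesis of Theorem~\ref{prop:GW} is misleading: in the Galton--Watson case that hypothesis controls the \emph{offspring} distribution and is converted into moment bounds on the trimmed tree via Proposition~\ref{pro:bound-trimmed-GW}; it is never a statement about the raw hanging subtree sizes either.

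What Theorem~\ref{thm:theta} actually requires is finiteness of all moments of $\|\trim(\hat T_\infty)\|$ (its hypothesis $(ii)$; hypothesis $(i)$ is trivial here since the alpha tree is binary). Since the untrimmed sizes are heavy-tailed, one must quantify the gain from trimming. The paper does this by introducing an idealized cutting procedure (a node with a leaf-child is removed with probability $\varepsilon=\nicefrac1{2k}$) that stochastically dominates $\trim$, and proving by induction on $n$ --- using the explicit split law $q_n^\alpha$, the lower bound $q_n^\alpha(1)\ge\nicefrac\alpha2$, and a bound on the middle range of $q_n^\alpha$ --- that the expected size after cutting of an alpha tree of size $n$ is $O(n^\alpha/(1+\ln^2 n))$, and similarly for all higher moments. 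Only then does the sum over the spine converge: $\sum_k k^{-(1+\alpha)}\cdot k^\alpha/(1+\ln^2 k)<\infty$, which is a borderline convergence that genuinely needs the logarithmic correction. This interplay between the $\Theta(1/k)$ trimming probability and the $\alpha$-dependent heavy tail is the substantive content of the proof and is entirely absent from your outline; without it the reduction to Theorem~\ref{thm:theta} cannot be completed.
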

The behaviour of the distribution on Boolean functions that is induced by the alpha model 
was unknown up to now. Being able to prove Theorem~\ref{prop:alpha_tree} as a corollary 
of our main result is thus a significant step forward, when one compares it to the 
estimates that were available until now, that only concern very specific subcases. 

{\medskip \noindent\bf Remark:}
In Theorem~\ref{prop:GW} and~\ref{prop:alpha_tree}, as well as in Equation~\eqref{eq:kozik-theta},
the constants involved in the $\Theta$-term depend on the function~$f$.

\begin{figure}
\begin{center}
\includegraphics[width=.7\textwidth]{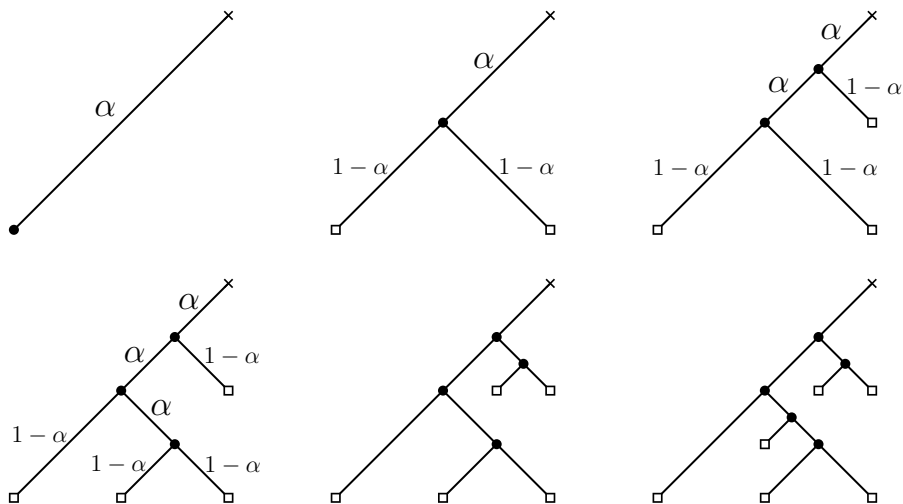}
\end{center}
\caption{A possible realisation of the first few steps of the construction of alpha trees.
The alpha tree itself is obtained when removing the edge attached to the crossed node (called the root-edge) 
and the cross node itself. The internal nodes are represented by disks and leaves by squares.}
\label{fig:alpha_tree}
\end{figure}




\subsection{Discussion and remarks} 
It is important to note that Theorem~\ref{thm:loc-lim} is about deterministic sequences of trees. 
So, in the frameworks of the previous results where the trees are random, the limit distribution 
we construct is conditional on the limit tree. This type of results makes it hard to resort 
to arguments based on analytic combinatorics \cite{FlSe2009} and the Drmota--Lalley--Woods 
theorem giving asymptotics for generating functions satisfying a certain type of system of equations,
since these techniques are very intimately related to counting problems. The techniques we use here 
are probabilistic,
as in part of \citet{ChFlGaGi2004}. One of the drawbacks is that we cannot guarantee that the limit 
probability $P_k[T_{\infty}]$ charges every function in $k$ variables, though it should certainly be the case. 

We only consider the case of and/or trees, but one could of course think of 
other models of Boolean expressions. For instance, the case of Boolean expressions
encoded by trees labelled by implications connectors have been treated by \citet{FoGaGeGi2012a}. 

Put together, Theorems~\ref{thm:loc-tri} and~\ref{thm:loc-lim} already give 
a pretty good idea of the properties of random Boolean expressions obtained by 
labelling large trees uniformly and independently of the tree. It would be 
interesting to study what happens when one deviates from this setting. One can 
probably relax the condition of uniformity without much harm, but the 
dependence of the labelling and the tree seems to be a more challenging obstacle.
For example, our setting does not include non plane and/or trees as defined in~\citet{GGKM15}, 
a model that takes into account the commutativity of the conjunction and disjunction 
connectives; 
this model is not covered by our framework since it cannot be described 
as the random uniform labelling of a random tree.
 
In the introduction, we evoked two different representations of a Boolean function: 
the truth table and Boolean expressions seen as trees.
One could also think about Boolean functions that are not represented by trees but by
circuits modelled by directed acyclic graphs with a single sink.

Finally, it would be interesting to look more precisely at what happens when the 
underlying limit tree has infinitely many ends as well as leaves. Note for example that 
Theorem~\ref{thm:loc-tri} does not rule out the possibility that the limit 
Boolean function be a measurable function of the labelled limit tree (even when it 
has no leaves). We strongly believe that if one considers the growth of the number of
ends which intersect the ball of radius $d$ around the root (in the graph distance) as fixed, then for 
every small enough number of variables one should be able to define the limit Boolean function. 
Can one make such a claim more precise using for instance the branching 
number or the malthusian parameter in the case of Galton--Watson trees 
(as in \citet{BaPePe2006a})?

\vspace{\baselineskip}
\noindent\textbf{Plan of the paper.}\ 
Section~\ref{sec:local-limit} is devoted to the convergence to a limit probability distribution
as $n$ tends to infinity: it contains the proofs of Theorems~\ref{thm:loc-tri} and~\ref{thm:loc-lim} 
and provides some examples of families of trees for which these theorems apply. 
Section~\ref{sec:improved_bounds} is focusing on the Catalan tree case: 
we present simple arguments to tighten Inequality~\eqref{eq:LeSa-bounds}.
In Section~\ref{sec:bounds} we prove the analog of~\citet{Kozik2008a}'s result 
(Equation~\eqref{eq:kozik-theta}) in our general setting.
This stronger result only holds under further moment assumptions which we discuss by providing examples.

\section{Continuity in the local topology}
\label{sec:local-limit}

\subsection{The degenerate case: proof of Theorem~\ref{thm:loc-tri}}
Note that local convergence to an infinite tree with no leaves 
is equivalent to the divergence of the saturation level (being the height of the closest leaf to the root), 
and in this section we phrase Theorem~\ref{thm:loc-tri} in this framework. 
For an integer $\sigma \ge 0$, we denote by $\bbT_{\sigma}$ the set of all rooted trees 
with a saturation level at least $\sigma$, so in particular $\bbT_0$ is the set of all rooted trees. 
Our proof of Theorem~\ref{thm:loc-tri} consists in estimating the probability that 
the random Boolean functions assigns two different values to two (distinct) points 
$a, b \in \{0,1\}^k$. (This is already the approach in~\cite{FoGaGe2009} and~\cite{ChGaMa2011a}.)
Note that we use the canonical notation $0=\False$ and $1=\True$.

Let $a$ and $b$ be two distinct elements of $\{0,1\}^k$, 
and let $\alpha,\beta\in\{0,1\}$. 
Let us define the following probability (where the probability $\mathbb P$ refers to the uniform random $k$-labelling): 
for all trees~$t$,
\[\bbP^{\alpha\beta}_t(a,b) := \mathbb P(\of[\hat t](a)=\alpha \text{ and }\of[\hat t](b)=\beta),\]
and the following supremum:
\[S_{\sigma}^{\alpha\beta}(a,b) = \sup\{\bbP^{\alpha,\beta}_t(a,b): t\in\bbT_{\sigma}\}.\]


\begin{lem}\label{lem:pre}
For every $k\ge 1$, 
there exists a constant $\sigma_0\ge 1$ such that for all $\sigma\ge \sigma_0$, one has
\[ \sup_{a\ne b \in \{0,1\}^k}S_{\sigma}^{10}(a,b) 
= \sup_{a\ne b \in \{0,1\}^k}S_{\sigma}^{01}(a,b) \le \frac 2 \sigma.
\]
\end{lem}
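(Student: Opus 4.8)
The plan is to set up a recursion on the root of a tree $t \in \bbT_\sigma$ according to its label and arity, and to show that the quantities $S_\sigma^{10}(a,b)$ and $S_\sigma^{01}(a,b)$ contract by roughly a factor $1/2$ each time we move one level up the tree, until after $\sigma$ steps we reach the bound $2/\sigma$ (up to constants, and after absorbing the small-$\sigma$ discrepancy into $\sigma_0$). First I would record the symmetry $S_\sigma^{10}(a,b) = S_\sigma^{01}(a,b)$: negating every literal label simultaneously is a measure-preserving bijection on labellings that also swaps $\land$ and $\lor$ via De Morgan only after also negating connectives, so more simply, the map swapping the roles of $a$ and $b$ (or, better, using that $a$ and $b$ are arbitrary distinct points) gives $S_\sigma^{10}(a,b) = S_\sigma^{10}(b,a) = S_\sigma^{01}(a,b)$ once one checks that $\bbP^{10}_t(a,b)=\bbP^{01}_t(b,a)$ for every fixed tree; I'd phrase this cleanly and then write $S_\sigma := \sup_{a\ne b} S_\sigma^{10}(a,b)$.

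Next, the core step: take $t \in \bbT_\sigma$ with $\sigma \ge 1$, so the root is internal with children subtrees $t_1, \dots, t_r$ ($r \ge 2$), each of which has saturation level at least $\sigma - 1$, hence lies in $\bbT_{\sigma-1}$. Condition on the root label. If the root is $\land$, then $\of[\hat t](a) = 1$ forces $\of[\hat t_i](a) = 1$ for all $i$, while $\of[\hat t](b) = 0$ requires $\of[\hat t_j](b) = 0$ for at least one $j$. Since the labellings of the subtrees are independent, for that index $j$ we get the factor $\bbP^{10}_{t_j}(a,b) \le S_{\sigma-1}$, and each other factor is a probability bounded by $1$; a union bound over the choice of $j$ is wasteful, so instead I'd bound $\bbP^{10}_t(a,b \mid \text{root}=\land)$ by $\bbP(\of[\hat t_1](a)=1)\cdot \mathbb{P}(\exists j:\of[\hat t_j](b)=0 \mid \of[\hat t_i](a)=1\ \forall i)$ and observe the first factor is at most... — in fact the cleanest route is to write $\bbP^{10}_t(a,b) \le \bbP^{10}_{t_1}(a,b) + \sum_{j\ge 2}\bbP(\of[\hat t_j](b)=0)\prod_{i\ne j}\bbP(\of[\hat t_i](a)=1)$, which is not obviously small. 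So the genuinely efficient argument is different: one shows that $q_\sigma := \sup_{a, t\in\bbT_\sigma}\mathbb{P}(\of[\hat t](a)\ne$ "the generic value") tends to $0$, or rather one tracks directly $p_\sigma(a):=\sup_{t\in\bbT_\sigma}\bbP(\of[\hat t](a)=1)$ and shows it converges to $1/2$ geometrically fast is false — it stays near $1/2$. The right invariant is $S_\sigma$ itself: with the root an $\land$, using independence across subtrees, $\bbP^{10}_t(a,b)\le \sum_{i=1}^r \bbP^{\cdot}_{t_i}$-type bound collapses because conditionally on $\of[\hat t](a)=1$ (all children give $1$ on $a$), the events $\{\of[\hat t_i](b)=0\}$ are independent, so $\bbP^{10}_t(a,b\mid \land) = \bigl(\prod_i \bbP(\of[\hat t_i](a)=1)\bigr)\bigl(1 - \prod_i \mathbb{P}(\of[\hat t_i](b)=1\mid \of[\hat t_i](a)=1)\bigr)$; bounding the first product by $\bbP(\of[\hat t_1](a)=1)$ and, in the second factor, each $\mathbb{P}(\of[\hat t_i](b)=1\mid\of[\hat t_i](a)=1)\ge 1 - \bbP^{10}_{t_i}(a,b)/\bbP(\of[\hat t_i](a)=1)$, one gets a telescoping/subadditive bound $\bbP^{10}_t(a,b\mid\land)\le \sum_i \bbP^{10}_{t_i}(a,b) \le r\,S_{\sigma-1}$ — still with a factor $r$. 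The factor $1/2$ must therefore come from averaging over the root label together with the $\lor$ case, where by De Morgan duality $\bbP^{10}_t(a,b\mid\lor) \le \sum_i \bbP^{01}_{t_i}(a,b) = \sum_i\bbP^{10}_{t_i}(a,b)$ as well, which does not help directly; the resolution is that one should instead look at $\bbP^{10}_t(a,b) + \bbP^{01}_t(a,b)$ and exploit that when the root is $\land$ the dominant contribution to $\{f(a)=1, f(b)=0\}$ is "one child distinguishes" while $\{f(a)=0\}$ is cheap, so that averaging $\land/\lor$ genuinely halves things.

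Given the subtlety above, the honest plan is: (i) prove the symmetry identity; (ii) define $M_\sigma := \sup_{a \ne b}\bigl(\bbP^{10}_t(a,b) + \bbP^{01}_t(a,b)\bigr)$ over $t \in \bbT_\sigma$ and derive, by conditioning on the root and using independence of subtree labellings, a recursive inequality of the form $M_\sigma \le \tfrac12 M_{\sigma-1} + (\text{lower order})$ — the $\tfrac12$ coming from the probability that the root label "helps", i.e. that the root is $\land$ when we want $f(b)=0$ few-children-suffice, $\lor$ otherwise; (iii) iterate to get $M_\sigma \le C/\sigma$ (the $1/\sigma$ rather than $2^{-\sigma}$ arising because the "lower order" term is itself only $O(M_{\sigma-1}^2)$ or $O(1/\sigma)$, so one gets harmonic-type decay, which is exactly what the statement $2/\sigma$ suggests); (iv) choose $\sigma_0$ so that $M_\sigma \le 4/\sigma$, hence each of $S_\sigma^{10}, S_\sigma^{01} \le M_\sigma \le 4/\sigma$, and then tighten the constant to $2/\sigma$ by being careful in the base case — or, if $4/\sigma$ is what naturally falls out, note the paper may simply want $2/\sigma$ attainable after re-indexing. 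The main obstacle, and where I'd spend the most care, is establishing the precise recursion with the correct constant $\tfrac12$ in front: one has to handle the sum over an unbounded number $r$ of children without losing a factor $r$, which requires using the multiplicative structure (products of near-$1$ probabilities) rather than a crude union bound, and simultaneously extract the gain from averaging over the two possible root connectives. Once that recursion is in hand, the iteration to $2/\sigma$ is routine.
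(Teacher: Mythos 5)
Your proposal does not close; it is an honest record of circling the key difficulty without resolving it. The missing ingredient is the exact identity $\bbP^{10}_t+\bbP^{11}_t=\bbP(\of[\hat t](a)=1)=\tfrac12$ for \emph{every} finite tree $t$ (a consequence of the symmetry $P_k[t](f)=P_k[t](\neg f)$, obtained by negating all labels). Once you have this, conditioning on the root label and using independence of the subtree labellings gives an exact formula rather than an inequality: for a root of arity $r$ with subtrees $t_1,\dots,t_r$,
\[
\bbP^{10}_t=\tfrac12\Bigl(\prod_{i=1}^r(\bbP^{10}_{t_i}+\bbP^{11}_{t_i})-\prod_{i=1}^r\bbP^{11}_{t_i}\Bigr)+\tfrac12\Bigl(\prod_{i=1}^r(\bbP^{10}_{t_i}+\bbP^{00}_{t_i})-\prod_{i=1}^r\bbP^{00}_{t_i}\Bigr)=\frac1{2^r}-\prod_{i=1}^r\Bigl(\frac12-\bbP^{10}_{t_i}\Bigr),
\]
using also $\bbP^{00}_{t_i}=\bbP^{11}_{t_i}$. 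The arity now enters only through $2^{-r}-(\tfrac12-x)^r$, which for $x\in[0,\tfrac12]$ is maximized at $r=2$; this is precisely what dissolves the ``factor $r$'' problem you keep running into with union bounds and with the bound $\sum_i\bbP^{10}_{t_i}\le rS_{\sigma-1}$. Large arity \emph{helps}, it does not hurt, but you only see this by keeping the products intact.

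The second gap is the form of the recursion you propose in step (ii). The correct recursion is $S_\sigma^{10}\le g(S_{\sigma-1}^{10})$ with $g(x)=\tfrac14-(\tfrac12-x)^2=x-x^2$: there is no contraction by $\tfrac12$ at each level. A recursion of the shape $M_\sigma\le\tfrac12M_{\sigma-1}+c$ would drive $M_\sigma$ to $2c$ (wrong limit) and with $c=O(M_{\sigma-1}^2)$ would give geometric decay, which is false --- the probability of distinguishing $a$ from $b$ in a complete binary tree of depth $\sigma$ genuinely decays like $1/\sigma$, not like $2^{-\sigma}$. The $2/\sigma$ in the statement comes from the standard asymptotics $u_\sigma\sim-2/(\sigma g''(0))=1/\sigma$ for iterates of a map with neutral fixed point $g(x)=x-x^2+o(x^2)$ at $0$; your intuition that ``harmonic-type decay'' should appear is right, but it is produced by the quadratic correction in $g$, not by a geometric contraction with a harmonic error term. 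Your symmetry step (i) is fine (the pointwise identity $\bbP^{10}_t(a,b)=\bbP^{01}_t(b,a)$ suffices for the equality of suprema), but without the exact half-identity and the product formula the rest of the plan cannot be executed.
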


\begin{proof}
Fix any two distinct points $a,b\in \{0,1\}^k$, and, if there is no possible confusion, 
write $\bbP^{\alpha\beta}_t$ instead of $\bbP^{\alpha\beta}_t(a,b)$ 
and $S_{\sigma}^{\alpha\beta}$ instead of $S_{\sigma}^{\alpha\beta}(a,b)$. 
First of all, the symmetries of the labelling imply immediately that 
$\bbP^{10}_t = \mathbb{P}^{01}_t$ and $\bbP^{11}_t = \bbP^{00}_t$. 
Indeed, since the probabilities of $\land$ and $\lor$ are equal, 
and the probabilities of a variable $x_i$ and its negation $\bar x_i$ are also equal, 
we can conclude that for any finite tree $t$ and for any Boolean function $f$
and its negation $\neg f$ we have
\[P_k[t](f) = P_k[t](\neg f).\]
Moreover, for every tree $t$, we have $\bbP_t^{10}+\bbP_t^{11} = \frac12$,
and thus, $S_{\sigma}^{\alpha\beta}\leq \frac12$, for all $\alpha, \beta\in\{0,1\}$ and for all $\sigma\in\mathbb N$.

Let $t\in \bbT_{\sigma}$. 
Let $r\geq 2$ be the degree of the root of $t$ 
(recall that, by assumption, $t$ contains no node of arity one, see Section~\ref{sub:def})
and denote by $t_1,\ldots, t_r$ 
the subtrees rooted at the root's children.
For all $i\in\{1,\ldots, r\}$, $t_i\in\bbT_{\sigma-1}$. 
Moreover, $\of[\hat t](a)=1$ and $\of[\hat t](b)=0$ imply that
\begin{itemize}
	\item either the root is labelled by $\land$ and $\of[\hat t_i](a)=1$ for all $1\le i\le r$, and 
	at least some $\of[\hat t_i](b)=0$, 
	\item or the root is labelled by $\lor$ and $\of[\hat t_i](b)=0$ for all $1\le i\le r$, and 
	at least some $\of[\hat t_i](a)=1$. 
\end{itemize}
It follows that
\begin{align*}
\bbP_t^{10} 
&= \frac12 \left(
\prod_{i=1}^{r}(\bbP_{t_i}^{10} + \bbP_{t_i}^{11})
- \prod_{i=1}^r \bbP_{t_i}^{11} \right)
+ \frac 1 2 \left(\prod_{i=1}^{r}(\bbP_{t_i}^{10}+\bbP_{t_i}^{00})
- \prod_{i=1}^r \bbP_{t_i}^{00}
\right)\\
&= \frac1{2^r} - \prod_{i=1}^r \mathbb{P}_{t_i}^{11}
= \frac1{2^r} - \prod_{i=1}^r \left(\frac12 - \mathbb{P}_{t_i}^{10}\right).
\end{align*}
Now, for all $i\in\{1,\ldots, r\}$, we have $\mathbb{P}^{10}_{t_i}\leq S_{\sigma-1}^{10}$, 
which implies that, for every tree $t\in\bbT_{\sigma}$ with a root of degree~$r$,
\[\bbP_t^{10}\leq \frac1{2^r} - \left(\frac12 - S_{\sigma-1}^{10}\right)^r.\]
Thus,
\[S_{\sigma}^{10} 
= \sup_{t\in\bbT_{\sigma}} \bbP_t^{10}
\leq \sup_{r\geq 2}\left\{\frac1{2^r} - \left(\frac12 - S_{\sigma-1}^{10}\right)^r\right\}.
\]
One easily verifies that, for all $x\in [0,\nicefrac12]$, one has
\[\sup_{r\geq 2}\left\{\frac1{2^r} - \left(\frac12 - x\right)^r\right\}
= \frac1{4} - \left(\frac12 - x\right)^2,\]
which implies, since $S_{\sigma}^{10}\leq \frac12$ for all integer $\sigma$,
\[S_{\sigma}^{10} 
\leq \frac14 - \left(\frac12 - S_{\sigma-1}^{10}\right)^2.
\]
Let us define the sequence $(u_{\sigma})_{\sigma \geq 1}$ as follows: 
$u_1 = S_{1}^{10}$ and $u_{\sigma+1}= g(u_{\sigma})$, 
where $g(x) = \nicefrac14 - (\nicefrac12 - x)^2 = x-x^2$. 
Since $S_\sigma^{10}\le \nicefrac12$ for every $\sigma\ge 1$, a straightforward induction 
on $\sigma$ shows that, for all $\sigma\geq 1$, one then has $u_{\sigma} \geq S_{\sigma}^{10}$.

Note that $0$ is the unique fixed point of $g$ and that $[0,1]$ is stable by $g$. 
Moreover, $g'(0) = 1$ and $g''(0) = -2$. 
By a standard result about inductive sequences: 
as $\sigma\to +\infty$,
\[u_{\sigma} \sim -\frac{2}{\sigma g''(0)} = \frac1{\sigma}.\]
It follows that, for all $a\neq b$, there exists $\sigma_{a,b}$ such that, for all $\sigma \geq \sigma_{a,b}$,
$S_{\sigma}^{10}(a,b) \leq \nicefrac{2}{\sigma}$.
Let $\sigma_0 = \max_{a\neq b \in\{0,1\}^k} \sigma_{a,b}$ to conclude the proof.
\end{proof}

With Lemma~\ref{lem:pre} under our belt, the proof of Theorem~\ref{thm:loc-tri} (a) 
appears now as a straightforward application of the union bound.

\begin{proof}[Proof of Theorem~\ref{thm:loc-tri} (a)] 
Fix an arbitrary integer $\sigma\ge 1$. Then, by assumption, there exists $n_0\ge 1$ large enough
such that for all $n\ge n_0$, one has $t_n\in \bbT_\sigma$. 
Now, by definition of $S_\sigma^{10}$, for all $n\ge n_0=n_0(\sigma)$ we have
\begin{align*}
\bbP(\of[\hat t_n]\notin \{\True,\False\}) 
&= \mathbb{P}(\exists a,b\in\{0,1\}^k : \of[\hat t_n](a)\neq \of[\hat t_n](b))\\
&\leq \sum_{a\neq b} \bbP(\of[\hat t_n](a)\neq \of[\hat t_n](b))\\
&\le 2^{2k} \sup_{a\ne b} 2 S_\sigma^{10}(a,b).
\end{align*}
Now for any $\sigma\ge \sigma_0$ in Lemma~\ref{lem:pre}, and all $n\ge n_0(\sigma)$, we have
$$\bbP(\of[\hat t_n]\notin \{\True,\False\}) \le \frac{2^{2k+2}} \sigma.$$
Letting $\sigma\to\infty$ completes the proof. 
\end{proof}

We now move on to the lower bound in Theorem~\ref{thm:loc-tri}. 

\begin{lem}\label{lem:lem2}
Let $\mathcal{A}_{\sigma}$ be the set of trees with saturation level equal to $\sigma$. 
Then, for every integer $\sigma\ge 1$,
\[I_{\sigma} := \inf_{t\in \mathcal{A}_{\sigma}} \bbP(\of[\hat t] \notin \{\True,\False\})
\ge 
\frac1{2^{\sigma}}.\]
\end{lem}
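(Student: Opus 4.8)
The goal is a uniform lower bound $I_\sigma \ge 2^{-\sigma}$ on the probability that a randomly $k$-labelled tree with saturation level exactly $\sigma$ represents a non-constant function. The natural strategy is to exhibit, inside the tree, a single leaf at depth exactly $\sigma$ together with the path from the root to that leaf, and to choose a favourable labelling of precisely the $\sigma$ internal nodes on this path together with the one leaf; everything off the path can be labelled arbitrarily. So first I would fix $t \in \mathcal{A}_\sigma$, select a leaf $\ell$ at height $\sigma$ (which exists since the saturation level equals $\sigma$), and denote by $v_0 = \text{root}, v_1, \dots, v_{\sigma-1}$ the internal nodes on the path from the root to $\ell$, with $v_{\sigma-1}$ the parent of $\ell$.

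The key observation is a conditioning/monotonicity argument: I want an event, measurable with respect to the labels of $v_0, \dots, v_{\sigma-1}$ and $\ell$ only, that forces $\of[\hat t]$ to be non-constant regardless of all other labels, and whose probability is at least $2^{-\sigma}$. The cleanest choice is to make the whole path ``transparent'' to the subtree hanging below $\ell$: I would ask that each $v_i$ be labelled so that the value at $v_i$ depends (in the relevant direction) on the value coming up from $v_{i+1}$. Concretely, pick any value $\epsilon \in \{0,1\}$ and any literal $z$ among $x_1,\bar x_1,\dots,x_k,\bar x_k$; there is an assignment $a$ with $z(a) = \epsilon$ and another $b$ with $z(b) = 1-\epsilon$. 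Label $\ell$ by $z$; then under assignment $a$ the leaf $\ell$ evaluates to $\epsilon$ and under $b$ to $1-\epsilon$. Now I choose the labels of $v_{\sigma-1}, \dots, v_0$ so that the value $\epsilon$ propagates up the spine to the root, and the complementary value $1-\epsilon$ also propagates up (this is possible: if at node $v_i$ the incoming value from $v_{i+1}$ is $1$, choosing $v_i = \lor$ forces the value at $v_i$ to be $1$; if it is $0$, choosing $v_i = \land$ forces it to $0$ — and these two requirements never conflict since they concern the two different assignments $a$ and $b$ only if... ) — here I must be slightly careful, and the honest way is: for each $i$ independently, there is at least one label in $\{\land,\lor\}$ (in fact, one can check a fixed choice works for both assignments simultaneously once we note that $\lor$ with one argument equal to $1$ gives $1$, and $\land$ with one argument equal to $0$ gives $0$, so choosing all path-nodes to be, say, $\lor$ makes the root evaluate to $1$ whenever $\ell$ evaluates to $1$, and we instead need a joint event). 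The correct uniform bound comes from: condition on the (arbitrary) labels of the off-path leaves and nodes, then for each of the $\sigma$ path-nodes there is a label choice that keeps the spine ``sensitive'' to $\ell$, this happens with probability $1/2$ per node and independently, so with probability $\ge 2^{-\sigma}$ the function $\of[\hat t]$ is forced to take both values $0$ and $1$ on the pair $a,b$, hence is non-constant.

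\textbf{Main obstacle.} The delicate point is making the ``sensitivity propagation'' argument genuinely uniform over all trees in $\mathcal{A}_\sigma$: the labels and subtrees hanging off the spine are adversarial, so the values feeding into each spine node $v_i$ from its non-path children are arbitrary and possibly different for assignments $a$ and $b$. I expect the resolution is to pick $a$ and $b$ to be the two assignments $(0,\dots,0)$ and $(1,\dots,1)$ together with a literal $z = x_1$ at $\ell$, and then argue by downward induction on $i$ that there is a choice of label for $v_i$ making the value at $v_i$ equal to $1$ under the all-ones assignment and to $0$ under the all-zeros assignment, given that this already holds at $v_{i+1}$: if $v_i = \lor$ then under all-ones the value is $1$ (one child is $1$), but under all-zeros the value need not be $0$; choosing instead a mix depending on $i$ does not obviously work either. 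The clean fix, which I would ultimately use, is monotonicity: label \emph{every} leaf of $t$ by a positive literal from $\{x_1,\dots,x_k\}$ — no, we cannot force that. So the real argument is the conditioning one: after revealing everything except the $\sigma$ spine labels, for \emph{each} spine node there are $4$ possibilities and at least one keeps the desired sensitivity; one checks this reduces to a $1/2$-probability event per node by the symmetry $\of[\hat t](f) = \of[\hat t](\neg f)$ already established, and independence across the $\sigma$ nodes gives the factor $2^{-\sigma}$. Pinning down this per-node $1/2$ bound rigorously — i.e. that conditionally on the downstream value being $\{0,1\}$-sensitive, a fresh uniform label at $v_i$ preserves sensitivity with probability $\ge 1/2$ — is the one computation the proof must actually carry out.
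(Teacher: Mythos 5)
Your overall strategy coincides with the paper's: pick a leaf $\ell$ at depth $\sigma$, reveal every label except the $\sigma$ connectives $\diamond_0,\dots,\diamond_{\sigma-1}$ on the root-to-$\ell$ path, exhibit one deterministic choice of these connectives forcing $\of[\hat t]$ to depend on the literal $x$ labelling $\ell$, and conclude that this choice is realised with probability at least $2^{-\sigma}$. The problem is that the single step which makes this work is exactly the step you leave open. Your first route (propagating the values of two fixed assignments $a,b$ up the path) fails for the reason you yourself identify: the off-path subtrees are adversarial, their values under $a$ and $b$ are arbitrary, and at a given node both connective choices can kill the discrepancy. Your fallback --- that the per-node $1/2$ bound ``reduces to'' the symmetry $P_k[t](f)=P_k[t](\neg f)$ --- is not an argument: that symmetry relates a function to its negation and says nothing about whether a fresh uniform connective at $v_i$ preserves dependence on $x$. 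You then explicitly defer ``the one computation the proof must actually carry out''; as written, the proposal is a correct plan with its central lemma missing.

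The missing lemma, and the way the paper supplies it, is this. Let $g_0,\dots,g_{\sigma-1}$ be the Boolean functions computed by the forests hanging off the path at $v_0,\dots,v_{\sigma-1}$ (all their labels being revealed). The obstruction you ran into is precisely that some $g_i$ may itself depend on $x$, since the same variable can label other leaves. The paper neutralises this by introducing $j$, the smallest index such that $g_j$ depends on $x$ (with $j=\sigma$ if none does), and prescribing only the connectives $\diamond_0,\dots,\diamond_j$. For $i<j$ the function $g_i$ does not involve $x$, so fixing an assignment $y_0$ of the other variables witnessing dependence on $x$ downstream, one takes $\diamond_i=\land$ if $g_i(y_0)=\True$ and $\diamond_i=\lor$ if $g_i(y_0)=\False$; either way the discrepancy between $(x=0,y_0)$ and $(x=1,y_0)$ survives to $v_i$. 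At level $j$ a separate direct check chooses $\diamond_j$ according to whether $g_j\equiv\False$ or not. The prescribed tuple $(\diamond_0,\dots,\diamond_j)$ then has probability exactly $2^{-(j+1)}\ge 2^{-\sigma}$, uniformly in the conditioning, which is the claimed bound. Without this split into the levels where $g_i$ involves $x$ and those where it does not, the per-node ``at least one connective works'' claim is not established --- indeed, it is false as a blanket statement about propagating the values of two fixed assignments, which is why your first attempt could not be repaired.
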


\begin{proof}
Let $t$ be a tree in $\mathcal{A}_{\sigma}$ and consider the associated randomly labelled tree $\hat{t}$. 
Let us denote by $x$ the label of $\ell$, one of its leaves at height $\sigma$; 
by $\diamond_0,\ldots,\diamond_{\sigma-1}$ the connectives of the nodes between the root and the leaf $\ell$ 
($\diamond_0$ being the label of the root and $\diamond_{\sigma-1}$ the label of the parent of $\ell$), 
and by $g_0,\ldots,g_{\sigma-1}$ the random boolean functions calculated by the forests hanging along the path from the root to $\ell$ 
(from top to bottom). Therefore, the function calculated by $\hat{t}$ is given by
\[\of[\hat{t}] = (g_0 \diamond_1 (g_2 \diamond_2 (\ldots (g_{\sigma-1}\diamond_{\sigma-1} x)))).\]
Let $j$ be the minimum $m\in\{0,\ldots,\sigma-1\}$ such that $g_m$ depends on $x$; 
if such~$m$ does not exist, we let $j=\sigma$. 
Let $\rho = g_{j+1}\diamond_{j+1}(\ldots (g_{\sigma-1}\diamond_{\sigma-1} x))$.
We can choose $\diamond_{j}$ in $\{\wedge, \vee\}$ such that $g_{j}\diamond_j \rho$ does depend on $x$: 
if $g_{j} \neq \False$, then $x \land g_{j}$ does depend on $x$ and if $g_{j}= \False$, 
then $x \lor g_{j} = x$ does depend on $x$. 
By induction, we then can choose $\diamond_{j},\ldots,\diamond_{0}$ such that 
$\of[\hat t]$ does depend on $x$, 
and the probability, conditionally on all the rest of the tree (and for any such conditioning), 
that $\diamond_0,\ldots,\diamond_{j}$ are actually equal to this choice is equal to $2^{-(j+1)}$.

To conclude, if we denote by $\of[\hat{t}](x=1)$ (resp. $\of[\hat{t}](x=0)$) 
the restriction of $\of[\hat{t}]$ to the subset of $\{0,1\}^n$ where $x=1$ (resp. $x=0$), 
\[\bbP(\of[\hat{t}](x = 1) \neq \of[\hat{t}](x = 0)) \geq \frac{1}{2^{{\sigma}}},\]
and thus
\[\mathbb{P}(\of[\hat{t}] \neq \True \text{ and }\of[\hat{t}]\neq \False) \geq \frac{1}{2^{{\sigma}}}.\]
The last inequality holds for every $t\in\mathcal{A}_{\sigma}$: 
taking the infimum proves Lemma~\ref{lem:lem2}.
\end{proof}


\begin{proof}[Proof of Theorem~\ref{thm:loc-tri} (b)]
By assumption, the saturation level of $(t_n)_{n\ge 1}$ does not diverge, and 
there exists $\sigma\ge 1$ such that we can find an infinite subsequence $J\subseteq \N$ such 
that for all $n\in J$, $t_n\not\in \bbT_\sigma$. Without loss of generality, we suppose now
that $J=\N$. 
Then, for all $n\in \N$, 
\begin{align*}
\bbP(\of[\hat t_n]\notin\{\True,\False\}) 
&= \sum_{m=0}^{\sigma-1} \sum_{t\in \cA_m} \bbP(\of[\hat t_n]\notin\{\True,\False\}) \I{t_n = t}\\
&\geq \sum_{m=0}^{\sigma-1}\sum_{t\in \mathcal{A}_m} I_m \I{t_n = t},
\end{align*}
since $I_m = \inf_{t\in\cA_m} \bbP(\of[\hat t]\notin \{\True,\False\})$. 
Therefore, since $t_n\in \cA_m$ for some $m<\sigma$, we have by Lemma~\ref{lem:lem2}
\begin{align*}
\bbP(\of[\hat t_n]\notin\{\True,\False\})
&\geq \sum_{m=0}^{\sigma-1} \frac1{2^m} \sum_{t \in \cA_m}\I{t_n = t}\geq \frac1{2^{\sigma -1}},
\end{align*}
which proves the claim.
\end{proof}

\subsection{Finitely many ends: proof of Theorem~\ref{thm:loc-lim}}
\label{sec:convergence}
In this section, we prove Theorem~\ref{thm:loc-lim}. 
So far, the Boolean function associated to a labelled tree has only been defined for finite trees. 
One of the main ingredients of the proof of Theorem~\ref{thm:loc-lim} is the following lemma, 
which proves that the Boolean function $\of[\hat t]$ is also well-defined when $t$ has a unique 
infinite path, which we refer to as the \emph{spine}.

Given a tree $t$ and an integer $h$, we denote by $t^h$ the tree obtained from $t$ 
by removing all nodes having height greater than $h$. 

\begin{lem}\label{lem:one_spine}
Let $t$ be a locally finite tree with at most one end. 
Then there exists an (random) integer $h\geq 1$ such that for all $i\geq h$, 
$\of[\hat{t}^i]=\of[\hat{t}^h]$ almost surely. The Boolean function $\of[\hat t]$ is then 
defined as $\of[\hat t^h]$. 
\end{lem}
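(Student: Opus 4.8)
The plan is to show that the function computed by the truncation $\hat t^h$ stabilizes once $h$ is large enough, by tracking how the root value depends on what happens far down the spine. First I would dispose of the case where $t$ is finite: then $t^h=t$ for all $h$ at least the height of $t$, and there is nothing to prove. So assume $t$ is infinite with exactly one end, and let $v_0, v_1, v_2, \dots$ be the vertices of the spine, with $v_0$ the root. For each $i\ge 1$, removing the edge $\{v_{i-1},v_i\}$ detaches a finite forest $F_i$ (the ``bush'' hanging off the spine at $v_{i-1}$ together with the finite subtrees that are not on the spine), so that the randomly labelled truncation $\hat t^h$ computes a Boolean function of the form
\[
\of[\hat t^h] \;=\; \bigl(G_1 \diamond_1 (G_2 \diamond_2 (\cdots (G_{h-1}\diamond_{h-1} G_h)\cdots))\bigr),
\]
where $\diamond_i\in\{\wedge,\vee\}$ is the (random) label of $v_{i-1}$, each $G_i$ for $i<h$ is the (random, finite) Boolean function computed by the labelled bush $\hat F_i$ hanging at $v_{i-1}$ off the spine, and $G_h$ is the function computed by the whole labelled subtree of $\hat t^h$ rooted at $v_{h-1}$ (the ``tail''). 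Increasing $h$ to $h+1$ only refines the tail term $G_h$ into $G_h \diamond_h G_{h+1}$; all the outer structure $G_1,\dots,G_{h-1},\diamond_1,\dots,\diamond_{h-1}$ is unchanged.

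The key observation is a monotonicity/absorption phenomenon in the ``context'' seen from the root. Define, for each $h$, the one-variable-style operator $C_h(\,\cdot\,) = G_1\diamond_1(\cdots(G_{h-1}\diamond_{h-1}\,\cdot\,))$ acting on a Boolean function placed in the tail slot; then $\of[\hat t^h]=C_h(G_h)$ and $\of[\hat t^{h+1}]=C_h(G_h\diamond_h G_{h+1})=C_{h+1}(G_{h+1})$. Since each step is ``$G_i \diamond_i (\cdot)$'', the set of Boolean functions reachable as $C_h(\,\cdot\,)$ ranges over all inputs is non-increasing in $h$: $\{C_{h+1}(g): g\}\subseteq\{C_h(g):g\}$, because $C_{h+1}(g)=C_h(G_h\diamond_h g)$. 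The crux is then to argue that this nested family of finite sets of Boolean functions (all subsets of the finite set of $k$-variable Boolean functions) stabilizes, and moreover that once the *size* of the reachable set stabilizes, the actual value $C_h(G_h)$ no longer changes. Concretely: almost surely the decreasing sequence of finite sets $R_h:=\{C_h(g): g \text{ a } k\text{-variable Boolean function}\}$ is eventually constant, say equal to $R$ for $h\ge h^\star$; and for $h\ge h^\star$ one checks $C_h(G_h)\in R$ and $C_{h+1}(G_{h+1})=C_h(G_h\diamond_h G_{h+1})\in R$, while the identity $C_{h+1}(G_{h+1})=C_h(G_h\diamond_h G_{h+1})$ together with the fact that both $G_h$ and $G_h\diamond_h G_{h+1}$ are mapped into the same value once the context has ``collapsed'' forces $\of[\hat t^{h+1}]=\of[\hat t^h]$; iterating gives the claimed random $h$.

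I expect the main obstacle to be making the last step fully rigorous: one must rule out that the context keeps ``remembering'' the tail forever, i.e. that $C_h$ never becomes insensitive to perturbations of its input in the direction produced by appending one more spine bush. The clean way is to work not with the value $C_h(G_h)$ but with the *pair* (reachable set $R_h$, current value $\of[\hat t^h]$), observe $R_h$ is a decreasing sequence in a finite lattice hence eventually constant almost surely, and then note that $\of[\hat t^{h+1}]=C_h(G_h\diamond_h G_{h+1})$ depends on $G_{h+1}$ only through which element of $R_{h+1}=R_h$ it lands on under $g\mapsto G_h\diamond_h g$ then through $C_h$; since $C_{h+1}$ and $C_h\circ(G_h\diamond_h\,\cdot\,)$ have the same image $R_{h+1}$, and appending further bushes only composes more operators of this form without enlarging the image, a short induction shows the value is frozen. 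Finally, the a.s.\ statement and the well-definedness of $\of[\hat t]:=\of[\hat t^h]$ follow because $h$ is measurable with respect to the labelling and the events ``$\of[\hat t^i]=\of[\hat t^h]$ for all $i\ge h$'' hold simultaneously for this $h$ by construction.
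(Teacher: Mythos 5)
Your decomposition of $\hat t^h$ into a spine context $C_h$ acting on a tail is the same starting point as the paper's proof, and the observation that the reachable sets $R_h=\{C_h(g):g\}$ form a decreasing (hence eventually constant) sequence of subsets of the finite set of $k$-variable Boolean functions is correct. However, the crux of your argument --- that once $R_h$ stabilizes the value $C_h(G_h)$ is frozen --- is false as a deterministic statement, and this is exactly where the lemma genuinely needs the randomness of the labelling. Here is a counterexample with $k=1$: let each spine node $u_i$ carry a single leaf-bush, labelled $x_1$ if $i$ is even and $\bar x_1$ if $i$ is odd, and let the connective at $u_i$ be $\land$ if $i$ is even and $\lor$ if $i$ is odd. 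Then the truncated values are
$x_1,\ x_1\land\bar x_1=\False,\ x_1\land(\bar x_1\lor x_1)=x_1,\ x_1\land(\bar x_1\lor(x_1\land\bar x_1))=\False,\dots$,
oscillating forever between $x_1$ and $\False$, while $R_h=\{\False,\,x_1\}$ for every $h\ge 1$, i.e.\ the reachable set is constant from the very first step. So equality of images of $C_{h+1}$ and $C_h\circ(G_h\diamond_h\cdot)$ does not force the two compositions to agree on the particular tail values that actually occur; the context can ``remember'' the tail forever even though its image has collapsed.

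What rescues the lemma is that such adversarial labellings have probability zero, and your proposal contains no probabilistic ingredient at the decisive step. The paper's proof supplies one: by the pigeonhole principle, among any $2^{2^{2^k}}+1$ consecutive spine levels two of them, say $i<j$, carry identical multisets of bush-functions; since the connectives $(\diamond_i)$ are i.i.d.\ fair coins independent of the bushes, with probability $\nicefrac12$ these two levels carry complementary connectives ($\land$ at $i$ and $\lor$ at $j$, or vice versa), and one checks directly that such a ``good pair'' freezes the value from level $j$ on (for each input $a$, either some bush at level $i$ absorbs under $\land$, or the matching bush at level $j$ absorbs under $\lor$). A geometric number of independent trials then makes the stabilization height almost surely finite. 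To repair your proof you would need to replace the image-stabilization step by an argument of this kind; note that in the counterexample above the even levels all have bush-multiset $\{x_1\}$ with connective $\land$ and the odd levels all have $\{\bar x_1\}$ with connective $\lor$, so no good pair ever occurs --- consistent with the non-convergence.
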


\begin{proof}
Let us denote by $(u_i)_{i\ge 0}$ the sequence of nodes along the spine of $t$ (starting from the root), 
and write $\diamond_i$ the label of $u_i$ in $\hat{t}$. 
For convenience, we introduce another truncation of the infinite tree $t$: we let 
$t^{[h]}$ denote the subtree of $t$ containing the root when the spine is cut between the nodes
$u_h$ and $u_{h+1}$; then $t^{[h]}$ is finite for every $h\ge 0$. 
Let $(t_{i,j})_{j\ge 1}$ denote the sequence of finite subtrees rooted at the children of $u_i$ 
in an arbitrary order (that is, we except the tree rooted at $u_{i+1}$). 
Note that by assumption, for every $i\ge 1$, $(t_{i,j})_{j\ge 1}$ is actually a non-empty 
(because $t$ contains no unary nodes by assumption) finite sequence 
of finite trees. 
Let $f_{i,j}:=\of[\hat t_{i,j}]$ (the Boolean function represented by the random labelling of the tree $t_{i,j}$),
and note that $(f_{i,j})_{i,j\ge 1}$ is independent of the 
sequence of labels along the spine $(\diamond_i)_{i\ge 1}$. 
Say that two sequences $\bff_i:=(f_{i,j})_{j\ge 1}$ and $\bff_m:=(f_{m,j})_{j\ge 1}$ are equivalent if 
the collections of Boolean functions they are made of are identical (the multiplicities and ordering 
may be different); we then write $\bff_i\sim \bff_m$.

The proof consists in finding an integer $h\ge 1$ such that replacing the subtree of $t$ rooted 
at $u_h$ by \emph{any} other tree, finite or not, does not affect the Boolean function computed 
by the truncations $\hat t^{[i]}$, for all $i\ge h$. It is then possible to safely define $\of[\hat t]$ as 
$\of[\hat t^{[h]}]$.
Let $h(\hat t)$ be the minimal height $h\ge 0$ such that for all $i\ge h$ one has 
$\of[\hat t^{[i]}]=\of[\hat t^{[h]}]$. 
In order to complete the proof, it suffices to show that $h(\hat t)$ is almost surely finite.

Assume that there exists two integers $i<j$ such that $\diamond_i=\land$, $\diamond_j =\lor$, and $\bff_i\sim \bff_j$.
Then, there exists $m\geq 1$ such that $f_{j,m} = f_{i,1}$.
For all $x\in\{0,1\}^k$ such that $f_{i, 1}(x) = 0$, we have $\of[\hat t^{[m]}](x) = \of[\hat t^{[i]}](x)$ for all $m\geq i$, 
and for all $x\in\{0,1\}^k$ such that $f_{i, 1}(x) = f_{j, m}(x) = 1$, we have $\of[\hat t^{[m]}](x) = \of[\hat t^{[j]}](x)$ for all $m\geq j$.
Thus, for all $x\in\{0,1\}^k$, for all $h\geq j$, $\of[\hat t^{[m]}](x) = \of[\hat t^{[j]}](x)$.
The same result holds if $\diamond_i=\lor$ and $\diamond_j =\land$;
we define $\blackdiamond_i = \land$ (resp. $\lor$) if $\diamond_i=\lor$ (resp. $\land$).
We have shown that if there exist two integers $i<j$ such that
$\diamond_j = \blackdiamond_i$ and $\bff_i\sim \bff_j$,
then $\of[\hat t^{[j]}]=\of[\hat t^{[m]}]$ for every $m\ge j$.
If this occurs, $h(\hat t)\le j$, and we are now looking for such a pair $(i,j)$ of integers, 
that we call \emph{good} hereafter.

Invoking the pigeon hole principle, we know that among the $2^{2^{2^k}}+1=: s_k$ first sequences in 
$(\bff_i)_{i\ge 1}$ at least two are equivalent: almost surely, there exists 
$i<j\le s_k$ such that  $\bff_i\sim\bff_j$. 
Thus, since $(\bff_i)_{i\ge 1}$ is independent of $(\diamond_i)_{i\ge 1}$ and the latter is 
an i.i.d.\ sequence, with probability $\frac12$\, we have $\diamond_j=\blackdiamond_i$ and 
$(i,j)$ is a good pair. 
If $\diamond_j\neq \blackdiamond_i$, we look at the next $s_k$ 
sequences of $(\bff_i)_{i\geq 1}$, find two equivalent sequences by the pigeon hole principle, 
and the corresponding indices happen to be a good pair with probability $\frac 12$. 
Continuing in this manner, we see that the number of groups of size $s_k$ one has to 
look at before finding a good pair follows a geometric distribution of parameter $1/2$, 
so that $h(\hat t)$ is almost surely finite and the proof is complete.
\end{proof}

This proof contains a first cutting algorithm of an infinite randomly labelled Boolean tree $\hat t$, 
which is far from optimal, but sufficient to prove the continuity of $\of[\cdot]$ in the local topology.
The infinite Boolean tree can certainly be simplified further and we introduce a refined trimming 
algorithm later on.

If, instead of having a single end, the tree $t$ has finitely many ends,
Lemma~\ref{lem:one_spine} still holds and the proof remains the same: one has
to continue the cutting algorithm as long as an end remains. 
Or equivalently, one just uses the previous algorithm for the portions of he tree below height~$d$, 
where the ends have all been separated.  
This is straightforward and we omit a formal proof of the following lemma:

\begin{lem}\label{lem:sev_spine}
Let $t$ be a tree with finitely many ends. 
Then there exists a random integer $h\geq 1$ such that for all $i\geq h$, 
$\of[\hat{t}^i]=\of[\hat{t}^h]$ almost surely. The Boolean function $\of[\hat t]$ is then 
defined as $\of[\hat t^h]$. 
\end{lem}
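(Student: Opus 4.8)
The plan is to reduce Lemma~\ref{lem:sev_spine} to Lemma~\ref{lem:one_spine} by a finite decomposition of $t$ near the place where its ends separate. Since $t$ has finitely many ends, say $e$ of them, there is a (deterministic) height $d\ge 1$ such that the forest induced by $t$ on the nodes at distance at least $d$ from the root has exactly $e$ connected components, each of which is an infinite tree with a single end. Write $t_1,\dots,t_p$ for the finitely many connected components of the forest obtained from $t$ by removing all nodes at height $\le d$: among these, exactly $e$ are infinite (one per end) and the rest are finite. Each infinite $t_\ell$, re-rooted at its highest node, is a locally finite tree with exactly one end, so Lemma~\ref{lem:one_spine} applies to $\hat t_\ell$ and yields a (random, a.s.\ finite) height $h_\ell$ beyond which the truncations $\of[\hat t_\ell^{\,i}]$ stabilise to $\of[\hat t_\ell]$.

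Next I would assemble these local stabilisations into a global one. Fix $h := d + \max_{\ell} h_\ell$, which is a.s.\ finite as a finite maximum of a.s.\ finite random variables. For $i\ge h$, the truncation $\hat t^i$ differs from $\hat t^h$ only inside the infinite components $t_1,\dots,t_e$, and within each such component the difference is exactly the difference between $\hat t_\ell^{\,i-d}$ and $\hat t_\ell^{\,h_\ell}$ (up to the fixed shift by $d$), which by Lemma~\ref{lem:one_spine} computes the same Boolean function. Propagating these equalities up through the finite ``core'' $t^d$ below height $d$ — every internal node of which has a fixed label in $\hat t$ and whose pending subtrees are either finite (hence unchanged) or one of the $t_\ell$ — one concludes $\of[\hat t^i] = \of[\hat t^h]$ for all $i\ge h$. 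This justifies defining $\of[\hat t] := \of[\hat t^h]$; independence of the choice of $h$ follows as in Lemma~\ref{lem:one_spine} since any two valid heights agree with the common stable value.

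The only genuine subtlety — and the reason the authors felt comfortable omitting the formal proof — is bookkeeping: one must check that the stabilisation inside each spine, which Lemma~\ref{lem:one_spine} phrases in terms of the truncations $\hat t_\ell^{\,[i]}$ that follow the spine, indeed controls the height-truncations $\hat t^i$ of the whole tree, and that the finitely many finite components pending below height $d$ (and everything in $t^d$) contribute nothing once their own finite stabilisation heights are absorbed into the maximum. None of this is hard; it is the routine ``finitely many ends $=$ finitely many independent single-end problems glued along a finite core'' argument, and there is no real obstacle beyond being careful that $d$ can be chosen deterministically (it depends only on the shape of $t$, not on the labelling) so that $h$ remains a.s.\ finite.
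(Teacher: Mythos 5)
Your proof is correct and follows essentially the same route the paper sketches (and deliberately omits): separate the finitely many ends at a deterministic height $d$, apply Lemma~\ref{lem:one_spine} to each resulting single-ended component, and take the maximum of the finitely many a.s.\ finite stabilisation heights. The bookkeeping you describe is exactly the ``straightforward'' argument the authors allude to.
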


\begin{lem}\label{lem:loc_conv}
Let $(t_n)_{n\geq 1}$ a sequence of unlabelled trees converging locally to a tree $t_{\infty}$ 
having finitely many spines. Then, there exists (random) integers $h\geq 1$ and $n_0\ge 1$
such that for all $i\geq h$, and all $n\geq n_0$, 
$$\of[\hat t_n^i]\EgalLoi \of[\hat t_{\infty}^i] = \of[\hat t_\infty].$$
\end{lem}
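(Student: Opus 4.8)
The plan is to combine the two preceding lemmas. By Lemma~\ref{lem:sev_spine} applied to $t_\infty$, there is almost surely a random height $h$ (depending only on the random labelling of $t_\infty$) such that $\of[\hat t_\infty^i]=\of[\hat t_\infty^h]=\of[\hat t_\infty]$ for all $i\geq h$. The point of the present lemma is that the same height $h$ works simultaneously for the approximating sequence, once $n$ is large enough. The key observation is that the construction of $h$ in the proof of Lemma~\ref{lem:one_spine}/\ref{lem:sev_spine} is \emph{local}: $h(\hat t_\infty)$ is determined by the labels and shapes of the tree $t_\infty$ restricted to a ball of some radius around the root — indeed, $h(\hat t_\infty)$ is a stopping time with respect to the filtration generated by reading the spine(s) and the hanging finite forests one level at a time, and once it takes a finite value $h$, that value depends only on $\hat t_\infty^{h}$ (plus possibly the shapes just above, to know that the forests are finite, but this is controlled by local convergence since $t_\infty$ is locally finite).

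First I would fix a realisation of the randomness and let $h:=h(\hat t_\infty)$, which is almost surely finite. Then I would use the definition of local convergence: there is $n_0$ such that for all $n\geq n_0$, the truncation $t_n^{h'}$ is isomorphic to $t_\infty^{h'}$, where $h'=h'(h)$ is a height large enough to contain all the data that went into determining that $h(\hat t_\infty)=h$ (in the proof of Lemma~\ref{lem:one_spine} this is the height reached by the ``good pair'' argument, a finite — if random — quantity). Transporting the labels of $\hat t_\infty$ across this isomorphism gives a labelling of $t_n^{h'}$, and by the recursive, bottom-up nature of $\of[\cdot]$ together with the very property that defined $h$, namely that re-attaching \emph{any} tree (finite or infinite, in particular the genuine subtree of $t_n$ above $u_h$) above level $h$ does not change the computed function, we get $\of[\hat t_n^i]=\of[\hat t_\infty^i]$ for this coupled labelling, for all $i\geq h$ and all $n\geq n_0$. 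Since the uniform random labelling of $t_n^i$ restricted to the ball of radius $h'$ has, for $n\geq n_0$, the same law as that of $t_\infty^i$ (the truncations being isomorphic), this yields the claimed equality in distribution $\of[\hat t_n^i]\EgalLoi\of[\hat t_\infty^i]$, and the right-hand side equals $\of[\hat t_\infty]$ for $i\geq h$ by Lemma~\ref{lem:sev_spine}.

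I would be slightly careful about one subtlety: the random height $h$ and the random ``lookahead'' $h'$ depend on the labelling, so the coupling has to be set up by first sampling the labelling of $t_\infty$, reading off $h$ and $h'$, and only then invoking local convergence to find $n_0=n_0(h')$; thus $n_0$ is itself random, which is exactly what the statement allows (``there exists (random) integers $h\geq1$ and $n_0\geq1$''). The main obstacle, and the only place requiring real care, is to argue cleanly that the event $\{h(\hat t_\infty)=h\}$ together with its witness is measurable with respect to $\hat t_\infty^{h'}$ for some finite $h'$ — i.e. that the stopping-time structure in Lemma~\ref{lem:one_spine} genuinely localises. Once that is granted, the rest is a routine transfer along the isomorphism of truncations provided by local convergence, using nothing more than the definition of $\of[\cdot]$ and the defining property of $h$.
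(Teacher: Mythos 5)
Your proposal is correct and follows essentially the same route as the paper: apply Lemma~\ref{lem:sev_spine} to get the almost surely finite $h$ with $\of[\hat t_\infty^i]=\of[\hat t_\infty^h]=\of[\hat t_\infty]$ for $i\ge h$, then use local convergence to make the truncations $t_n^i$ and $t_\infty^i$ isomorphic for $n$ large, so that their uniform labellings, and hence the computed functions, agree in law. The measurability/coupling subtlety you flag is not actually needed: the first relation asserted is only an equality in distribution between $\of[\hat t_n^i]$ and $\of[\hat t_\infty^i]$, which follows directly from the isomorphism of the deterministic unlabelled truncations without ever coupling the labelling of $t_n$ to that of $t_\infty$ or localising the event $\{h(\hat t_\infty)=h\}$.
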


\begin{proof}
Lemma~\ref{lem:sev_spine} tells us that there exists almost surely an integer $h\geq 1$ such that, 
for all $k\geq h$, $\of[\hat t_{\infty}^k]=\of[\hat t^h_{\infty}]$, the latter serving as a definition 
for $\of[\hat t_\infty]$. 
On the other hand, for this value of $h$, since $t_n\to t$ in the local topology,
there exists a integer $n_0\geq 1$ such that, for all $n\geq n_0$, $t_n^h=t_{\infty}^h$, 
which implies that $\of[\hat t_n^h]$ and $\of[\hat t_{\infty}^h]$ have the same distribution. 

In other words, there exists almost surely an integer $h\geq 1$ such that, 
for all $i\geq h$, there exists an integer $n_i\geq 1$ such that, 
for all $n\geq n_i$,
\[\of[\hat t_n^i] \EgalLoi \of[\hat t_{\infty}^i] = \of[\hat t_{\infty}],\]
which proves the claim.
\end{proof}

The following result is a direct consequence of Lemma~\ref{lem:loc_conv}:
\begin{thm}\label{thm:cv}
Let $(T_{k,n})_{n\geq 1}$ a sequence of unlabelled random trees converging in distribution 
to a local limit $T_{\infty}$ having a finite number of infinite branches with probablity one. 
For any $k$-variable Boolean function~$f$, 
let us denote by $\mathbb{P}_{n,k}(f)$ the probability that $\of[\hat{T}_n]=f$. 
Asymptotically as $n$ tends to infinity, 
the sequence $(\mathbb{P}_{n,k})_{n\geq 1}$ converges to an asymptotic probability distribution $\mathbb{P}_k$ 
such that, for every Boolean function $f$, $\mathbb{P}_k(f) = \mathbb{P}(\of[\hat{T}_{\infty}]=f)$.
\end{thm}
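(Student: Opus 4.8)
The plan is to deduce Theorem~\ref{thm:cv} from Lemma~\ref{lem:loc_conv} together with the portmanteau-type characterisation of convergence in distribution, exploiting that the space of $k$-variable Boolean functions is \emph{finite}. First I would unwind the statement: since $(T_{k,n})_{n\ge 1}$ converges in distribution to $T_\infty$ as \emph{unlabelled} trees in the local topology, and since $T_\infty$ has almost surely finitely many ends, Lemma~\ref{lem:sev_spine} guarantees that $\of[\hat T_\infty]$ is well defined, so the candidate limit law $\mathbb P_k(f):=\mathbb P(\of[\hat T_\infty]=f)$ makes sense. It remains to show $\mathbb P_{n,k}(f)\to\mathbb P_k(f)$ for each fixed $f$.

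The key step is a coupling/approximation argument. Fix $f$ and $\varepsilon>0$. Because the event $\{\of[\hat t^i]=\of[\hat t^h]\text{ for all }i\ge h\}$ eventually stabilises (Lemma~\ref{lem:sev_spine}), there is a deterministic height $H=H(\varepsilon)$ such that $\mathbb P(\of[\hat T_\infty]\ne\of[\hat T_\infty^{H}])<\varepsilon$; indeed the random stabilisation height $h(\hat T_\infty)$ is a.s.\ finite, so $\mathbb P(h(\hat T_\infty)>H)\to 0$ as $H\to\infty$. Next, since the truncation $t\mapsto t^{H}$ takes finitely many values on trees with a bounded number of nodes at height $\le H$ — and in any case the event $\{T_{k,n}^{H}=\mathfrak t\}$ is a continuity set for the local topology for each fixed finite rooted tree $\mathfrak t$ — local convergence in distribution gives $\mathbb P(T_{k,n}^{H}=\mathfrak t)\to\mathbb P(T_\infty^{H}=\mathfrak t)$ for every $\mathfrak t$. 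Conditionally on the value of the truncation, the random labelling of $\hat t^{H}$ depends only on $\mathfrak t=t^{H}$, so $\mathbb P(\of[\hat T_{k,n}^{H}]=f)=\sum_{\mathfrak t}\mathbb P(T_{k,n}^{H}=\mathfrak t)\,P_k[\mathfrak t](f)$, and the analogous identity holds for $T_\infty$. Summing (the sum is effectively finite once we note $P_k[\mathfrak t](f)$ depends only on $\mathfrak t$ and is bounded by $1$, plus a tail control on $\mathbb P(T_\infty^{H}=\mathfrak t)$ using tightness of the local limit) yields $\mathbb P(\of[\hat T_{k,n}^{H}]=f)\to\mathbb P(\of[\hat T_\infty^{H}]=f)$.

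Finally I would assemble the pieces with the triangle inequality:
\[
\bigl|\mathbb P_{n,k}(f)-\mathbb P_k(f)\bigr|
\le \bigl|\mathbb P(\of[\hat T_{k,n}]=f)-\mathbb P(\of[\hat T_{k,n}^{H}]=f)\bigr|
+\bigl|\mathbb P(\of[\hat T_{k,n}^{H}]=f)-\mathbb P(\of[\hat T_\infty^{H}]=f)\bigr|
+\bigl|\mathbb P(\of[\hat T_\infty^{H}]=f)-\mathbb P_k(f)\bigr|.
\]
The third term is $<\varepsilon$ by choice of $H$; the middle term tends to $0$ as $n\to\infty$ by the previous paragraph; for the first term one needs the analogue of the stabilisation bound uniformly in $n$, namely that $\mathbb P(\of[\hat T_{k,n}]\ne\of[\hat T_{k,n}^{H}])$ is small for $n$ large. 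This last point is where the argument is most delicate: it requires that the stabilisation height of $\hat T_{k,n}$ does not escape to infinity along the sequence. I would obtain it from Lemma~\ref{lem:loc_conv}, which (read carefully) already provides a common height $h$ and threshold $n_0$ such that $\of[\hat T_{k,n}^{i}]\EgalLoi\of[\hat T_\infty^{i}]$ for $i\ge h$, $n\ge n_0$; distributional equality with the stabilised limit then transfers the bound $\mathbb P(h(\hat T_\infty)>H)<\varepsilon$ to $T_{k,n}$ for $n\ge n_0$.

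The main obstacle, then, is making the uniform-in-$n$ control of the stabilisation height rigorous: one must argue that the "cutting algorithm" of Lemma~\ref{lem:one_spine}/\ref{lem:sev_spine}, applied to $T_{k,n}$, terminates at a height whose distribution is controlled uniformly in $n$. This is essentially forced by Lemma~\ref{lem:loc_conv}, but spelling out that the equality in distribution $\of[\hat T_{k,n}^{i}]\EgalLoi\of[\hat T_\infty^{i}]$ holds \emph{jointly in $i$} (so that the stabilisation events match up) is the point that needs care; everything else is bookkeeping with finitely many Boolean functions and the portmanteau theorem.
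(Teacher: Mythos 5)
Your proposal is correct and follows essentially the same route as the paper, which states Theorem~\ref{thm:cv} as a direct consequence of Lemma~\ref{lem:loc_conv} and gives no further detail; your version simply makes the truncation, portmanteau and triangle-inequality steps explicit. The one point you flag as delicate --- the uniform-in-$n$ control of $\mathbb P(\of[\hat T_{k,n}]\neq \of[\hat T_{k,n}^{H}])$ --- is indeed the only place needing care, and it is resolved not by distributional equality of the truncated functions alone but by observing that the ``good pair'' event constructed in the proof of Lemma~\ref{lem:one_spine} is measurable with respect to the labelled truncation at a finite height and forces the computed function to be insensitive to anything grafted above that height, so that once $T_{k,n}^{H}=T_{\infty}^{H}$ (which local convergence guarantees with probability tending to one) the stabilisation transfers verbatim to $\hat T_{k,n}$.
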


Most natural examples have finitely many ends, but it is natural to ask whether this condition is necessary. 
Theorem~\ref{thm:loc-tri} treats the case when the limit tree has no leaves and
possibly infinitely many ends. 
One could ask what could be said in a less extreme case, when there are infinitely many ends
as well as leaves in the limit tree. This question remains open.

\subsection{A few natural examples}
\label{sec:examples}

To prove the existence of a limit probability distribution, it is enough to prove 
the local weak convergence towards a limit tree that has finitely many ends. 
We provide in this section examples of sequences of random trees that have finitely many ends, 
and thus to which Theorem~\ref{thm:loc-lim} can be applied.
In three of these examples, namely the conditioned critical Galton--Watson trees, the non-plane binary trees, 
and the fragmentation trees,
the sequence of trees $(t_{n,k})_{n\geq 0}$ and its local limit $t_k$ 
do not depend on $k$, the number of variables. 
For the so-called {\it associative tree}, treated in Section~\ref{sub:assoc}, which is a natural example from the literature, the 
sequence of trees and its limit both depend on the number of variables.

\subsubsection{Conditioned critical Galton--Watson trees}\label{sub:GW}
Let $\xi$ be an integer-valued random variable.
The Galton--Watson tree of progeny distribution $\xi$ is a random rooted tree in which 
every node has a number of children that is an independent copy of $\xi$.

Assume that the progeny distribution verifies that $\E{\xi}=1$, and that $p_1=\pc{\xi=1}=0$ 
(recall that all trees considered in this article have no unary nodes).
We let $\GW_\xi$ denote the distribution of a Galton--Watson tree with reproduction
distribution $\xi$.
Let $T_n$ be a random Galton--Watson tree with progeny distribution $\xi$, 
conditioned on the total population being $n$ (if such a size is possible). Then, it 
is well-known that $T_n$ converges locally in distribution to an infinite tree 
$T_\infty$ described as follows (see, e.g., \cite{Kesten1986a, LyPePe95a, AbDe2014a}). 
Let $\hat \xi$ be the size-biased distribution associated to $\xi$ defined by
$\pc{\hat\xi=i}= i \p{\xi=i}$, $i\ge 0$. 
Let $(\hat \xi_i)_{i\ge 0}$ be a sequence of i.i.d.\ copies of $\hat \xi$. 
Then there exists a unique self-avoiding infinite path in $T_\infty$, 
consisting of the nodes $(u_i)_{i\ge 0}$; and for every $i\ge 0$, 
$u_i$ has $\hat \xi_i\ge 1$ children, one of each is $u_{i+1}$ and the others 
are the roots of i.i.d.\ copies of unconditioned $\GW_{\xi}$ random trees. (See Figure~\ref{fig:GW}.)
The random trees discussed in \cite{LeSa1997} and \cite{ChFlGaGi2004} correspond to the
special case where~$\xi$ is such that $\p{\xi=2}=\p{\xi=0}=1/2$. The results 
of \citet{Woods1997} on Cayley trees (uniformly random labelled trees) also 
fit in the framework since the shapes of a Cayley tree of size $n$ and of a Galton--Watson
tree with Poisson$(1)$ offspring conditioned on the total progeny to be $n$ have the same distribution.
(Note however, that the arguments in \cite{ChFlGaGi2004} could be probably be extended to critical Galton--Watson 
trees with an offspring distribution $\xi$ having exponential moments; here such an assumption on moment is 
not necessary.)

\begin{figure}
\begin{center}
\includegraphics[scale=.5]{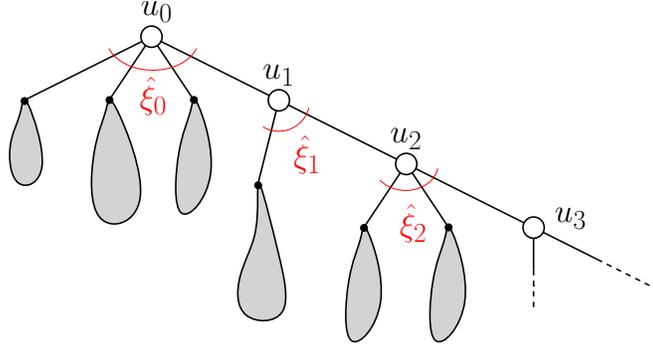}
\end{center}
\caption{The critical Galton--Watson tree of reproduction random variable $\xi$ conditioned to be infinite: the $\hat \xi_i$ are independent copies of the size biased version of $\xi$, and the grey subtrees are independent Galton--Watson trees of reproduction random variable $\xi$.}
\label{fig:GW}
\end{figure}

\subsubsection{Non-plane binary trees}
A rooted non-plane binary unlabelled tree is either a single external node, or it 
consists of an unordered pair of such trees. These trees can be seen as 
the equivalence classes of the usual (plane) binary trees where two trees are 
deemed equivalent if it is possible to transform one tree into the other 
by swapping the left and right children of a finite collection of nodes. 
These trees originate in the work of \citet{Polya1937}, and have been enumerated 
by \citet{Otter1948}; in the following we refer to them simply as \emph{unordered trees}.
They are different from the conditioned Galton--Watson trees
of the previous example in an essential way, and in particular they lack the 
nice probabilistic representation as a branching process \cite{MaMi2011a,HaMi2012a,
BrFl2012a}. 
Let $y_n$ denote the number of rooted binary unordered trees with $n$ labelled leaves. 
Then, \citet{Otter1948} 
proved that 
\begin{align}\label{eq:otter}
y_n = \kappa \rho^{-n} n^{-3/2} (1+O(1/n)),
\end{align}
for some constants $\kappa>0$ and $\rho\in (1/4,1/2)$. 
Note in particular that there is a constant $C>0$ such that $y_n\le C \rho^{-n} n^{-3/2}$ for all $n\geq 0$.
We would like to prove that if $T_n$ is a sequence of uniformly random binary unordered 
trees on $n$ leaves, then $T_n$ converges in distribution in the local sense to an 
infinite tree with a single infinite path. To prove this, we consider 
$\lambda_1^n\ge \lambda_2^n$ the sizes of the two subtrees of the root. 
For all $i<n/2$, $\lambda_2^n = i$ implies that $\lambda_1^n>\lambda_2^n$ and therefore,
there cannot be any symmetry that involves the root, implying that
\begin{align}\label{eq:otter_dist}
\p{\lambda_2^n=i}
& = \frac{y_{n-i} \cdot y_i}{y_n} = \rho^{i} y_i + O(i/n).
\end{align}
In particular, 
we have, for all $j<n/2$,
\[
\p{\lambda_2^n\ge j} 
\le \sum_{i\ge j}^{\nicefrac{n}2} \frac{y_{n-i} \cdot y_i} {y_n}
\le \frac{C^2}{y_n} \sum_{i\ge j}^{\nicefrac{n}2} \rho^{-n} (n-i)^{-3/2} i^{-3/2}
\le A j^{-1/2},
\]
for a constant $A$ and all $n$ large enough. 
This implies that $(\lambda_2^n)_{n\ge 1}$ is tight, so that by \eqref{eq:otter_dist}, it converges 
in distribution to a (real) random variable, say $X$. 

Let $(X_i)_{i\ge 0}$ be a sequence of i.i.d.\ copies of $X$, and conditional on that,
let $(Y_i)_{i\ge 0}$ denote a sequence of independent random rooted binary unordered trees
of respective sizes $X_i$. Finally, let $T_\infty$ be the binary tree consisting of a single 
infinite path $(u_i)_{i\ge 0}$ to which one appends the trees $Y_i$ by adding an 
edge between $u_i$ and the root of $Y_i$. Then, $T_\infty$ is the local weak limit 
of $T_n$. 

\subsubsection{The associative tree}\label{sub:assoc}
Suppose that for all $n\geq 1$, $G_{n,k}$ is uniformly distributed among all trees with $n$ nodes 
(instead of $n$ leaves as in the majority of examples) labelled with `and' and `or' on the internal nodes, 
and the literals $\{x_1,\bar x_1,\dots, x_k, \bar x_k\}$ on the leaves.
Let us denote by $T_{n,k}$ the random unlabelled tree obtained by forgetting the labels of $G_{n,k}$. 
Since for every $t$ with $\ell$ leaves and $n-\ell$ internal nodes, there are $(2k)^\ell$ different labellings of the leaves and $2^{n-\ell}$ labelling of the internal nodes, the probability that $T_{n,k}$ is equal to a given tree $t$ with $\ell$ leaves and $n-\ell$ internal nodes
is proportional to $k^{\ell}$.

Recall that, given a sequence of weights $(\omega_i)_{i\geq 0}$, the $n$-node simply 
generated tree is defined as follows (see, e.g., \cite{MM78}):
\begin{itemize}
\item For an $n$-node rooted tree $t$, let its weight 
$w(t)=\prod_{\nu\in t} \omega_{\mathtt{out}(\nu)}$ 
where the product is over the nodes $\nu$ of $t$ and $\mathtt{out}(u)$ denotes 
the number of children of a node $u$;
\item an $n$-node simply generated tree associated with the weight sequence 
$(\omega_i)_{i\geq 0}$ is then an $n$-node rooted tree sampled with probability 
proportional to its weight $w(t)$.
\end{itemize}

Thus, $T_{n,k}$ is the simply generated tree with weights $w_0 = k$, $w_1=0$ and
$w_i = 1$ for all $i\geq 2$.
Note that the simply generated tree with weight sequence
\begin{equation}\label{eq:GW_assoc}
\pi_i = \frac{1}{k\left(1+\frac{1}{1+\sqrt{k}}\right)} \left(\frac{\sqrt{k}}{1+\sqrt{k}}\right)^i w_i
\end{equation}
has the same law as $T_{n,k}$, and the sequence $(\pi_i)_{i\geq 1}$ is a probability sequence. 
Then $T_{n,k}$ has the same law as a (critical) Galton--Watson tree with offspring distribution $(\pi_i)_{i\geq 0}$ conditioned
on being of size $n$. 
In view of Section~\ref{sub:GW}, we know that such a tree locally converges to an infinite tree with one 
infinite end. Remark that this local limit, even unlabelled, depends on $k$.

\subsubsection{Fragmentation trees} \label{sub:alpha-gamma}
Consider a family of probability distributions $\bq:=(q_n)_{n\ge 1}$ such that $q_n$ 
is a distribution on the set of partitions of the integer $n$. 
A partition of~$n$ is a non-increasing integer sequence $(\lambda_1^n, \lambda_2^n, \ldots, \lambda_n^n)$ of sum $n$; as an example, the partitions of~$3$ are $(3), (2, 1)$ and $(1,1,1)$.
For $n=1$, we assume that $q_1$ charges the partition $(1)$, 
but also the empty sequence $\varnothing$. We require that for all $n\ge 1$, 
$q_n((n))<1$, so that $q_n$ does not only charge the partition $(n)$. 
Then the family $\bq$ induces a family of random fragmentation trees 
which are defined as the genealogical trees of the fragmentation of a collection of $n$ indistinguishable 
items, or balls. The tree $T^n$ on $n$ leaves is rooted, and this root represents
the collection of $n$ first items. With probability 
$q_n(\lambda_1,\lambda_2,\dots, \lambda_p)$, the collection $(n)$ is split into 
$p$ subcollections of sizes $\lambda_1\ge \dots\ge \lambda_p$ with $\lambda_1+\dots+\lambda_p=n$. 
The root of $T^n$ then
has $p$ children which are independent copies of $T^{\lambda_1}$, \dots, $T^{\lambda_p}$.
Note that when $q_n((n))>0$ it is possible that the collection remains unchanged, 
and that the root of $T^n$ has only one child. (There is a similar model of 
random trees with $n$ nodes, see \citet{HaMi2012a} for details.) The model emcompasses 
the ones in \cite{Aldous1996,ChFoWi2009,Devroye1998,BrDeMcSa2008}.

As we have already seen, the relevant information for and/or trees is located 
around the root, and we shall investigate conditions on $\bq$ under which a sequence of 
random fragmentation trees converges locally (in distribution). Write 
$\ell^1_\downarrow=\{(x_1,x_2,\dots): x_1\ge x_2\ge \dots \ge 0 : \sum_{i\ge 1} x_i<\infty\}$
equipped with the usual $\ell^1$ norm. 

\begin{prop}\label{prop:frag_trees}
Let $\bq=(q_n)_{n\ge 1}$ be a family of probability distributions such that $q_n$ 
is a distribution on the set of partitions of the integer $n$
with $q_n((n))<1$. Let $(\lambda_i^n)_{i\ge 1}$ be a random variable 
under $q_n$. If $(\lambda_i^n)_{i\ge 2}$ converges in distribution in $\ell^1_\downarrow$
as $n\to\infty$, then $T^n$ converges locally in distribution to a limit random 
tree $T^\infty$ with a single end.
\end{prop}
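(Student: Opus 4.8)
The plan is to construct the candidate limit tree $T^\infty$ explicitly and then verify that the local limit of $T^n$ matches it. First I would set up the limiting object. By hypothesis, the tail sequence $(\lambda_i^n)_{i\ge 2}$ converges in distribution in $\ell^1_\downarrow$ to some random element, say $(\mu_i)_{i\ge 1}$. The largest block $\lambda_1^n$ carries the bulk of the mass (it is of order $n$ in the cases of interest, though all we need is that it tends to infinity in probability, which follows from $q_n((n))<1$ together with tightness of the tail), so it is the one along which the spine continues. So I would define $T^\infty$ as a tree with a single infinite spine $(u_i)_{i\ge 0}$; independently at each spine node $u_i$, sample a copy of the limiting tail partition $(\mu_i^{(i)})_{i\ge 1}$, attach that many ``frozen'' integer-labelled children, and recursively grow an independent copy of the fragmentation tree $T^{\mu_j^{(i)}}$ below each such child. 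The recursion terminates at blocks of size $1$, and because the $\mu_j^{(i)}$ are finite a.s.\ and the fragmentation of a finite collection terminates a.s., every subtree hanging off the spine is finite, so $T^\infty$ has exactly one end.

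Next I would prove the local convergence by induction on the truncation height $h$. The statement to establish is that $T^{n,h}$ (the truncation of $T^n$ at height $h$) converges in distribution to $T^{\infty,h}$ as $n\to\infty$, for every fixed $h$. The base case $h=0$ is trivial. For the inductive step, condition on the first split at the root of $T^n$: with probability $q_n(\lambda_1,\dots,\lambda_p)$ the root has children which are independent fragmentation trees of sizes $\lambda_1\ge\dots\ge\lambda_p$. I would split this according to whether we record the block sizes or just the tree shapes truncated at height $h-1$. The key point is a coupling/continuity argument: since $(\lambda_i^n)_{i\ge 2}\Rightarrow(\mu_i)_{i\ge 1}$ in $\ell^1_\downarrow$, and $\ell^1$ convergence of a non-increasing nonnegative sequence forces eventual coordinate-wise \emph{equality} of each fixed coordinate for large $n$ (the coordinates are integers), the number and sizes of the ``small'' children stabilise; meanwhile $\lambda_1^n\to\infty$ so the large child is itself a fragmentation tree on a diverging number of leaves, to which the induction hypothesis applies to give convergence of its truncation at height $h-1$ to $T^{\infty,h-1}$. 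Assembling: the truncation of the large child converges to the spine-subtree $T^{\infty,h-1}$, the small children converge to finite fragmentation trees $T^{\mu_j,h-1}$, and these are asymptotically independent because they are built from disjoint blocks. Hence $T^{n,h}\Rightarrow T^{\infty,h}$.

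There are a couple of technical points I would need to handle carefully. One is that $\ell^1_\downarrow$ convergence only controls the ``macroscopic'' small blocks; I must rule out that a growing number of size-$O(1)$ blocks other than $\lambda_1^n$ escape detection, but this is exactly what the $\ell^1$ (as opposed to pointwise) topology prevents: if $(\lambda_i^n)_{i\ge 2}\to(\mu_i)_{i\ge 1}$ in $\ell^1$, then $\sum_{i\ge 2}\lambda_i^n\to\sum_{i\ge 1}\mu_i<\infty$, so the total mass outside the largest block is tight, and in particular the largest block satisfies $\lambda_1^n = n - O_{\mathbb P}(1)\to\infty$. The second point is measurability/well-definedness of $T^\infty$ as a genuine random element of the space of locally finite rooted trees, and the a.s.\ finiteness of each hanging subtree; for the latter I would argue that starting from a finite block the fragmentation process is a finite Markov chain that is strictly decreasing whenever it does not stay put, and the condition $q_m((m))<1$ for all $m$ forbids it from staying put forever, so absorption at all-singletons happens a.s.\ in finitely many steps, hence the subtree is finite a.s.

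The main obstacle I expect is making the asymptotic-independence-across-blocks step fully rigorous while simultaneously tracking that the truncation at height $h-1$ of the large block converges: one is applying the induction hypothesis to a random size $\lambda_1^n$ rather than a deterministic one, so I would phrase the induction hypothesis uniformly, as convergence of $\mathbb P(T^{m,h-1}=s)$ for each fixed finite tree $s$ to $\mathbb P(T^{\infty,h-1}=s)$ as $m\to\infty$, and then average over the (diverging) random value $\lambda_1^n$ using dominated convergence. Everything else is a bookkeeping exercise of matching the recursive description of $T^\infty$ with the one-step fragmentation of $T^n$.
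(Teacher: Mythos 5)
Your proposal is correct and follows essentially the same route as the paper: you build the identical limit tree (an infinite spine whose nodes carry i.i.d.\ copies of the limiting tail partition, with independent finite $\bq$-fragmentation trees hanging from the corresponding blocks) and then verify the local convergence. The paper in fact stops at the construction and simply asserts that the convergence "should be clear", so your induction on the truncation height, together with the observations that $\lambda_1^n=n-O_{\mathbb P}(1)\to\infty$ and that $q_m((m))<1$ forces a.s.\ finiteness of the hanging subtrees, supplies exactly the verification the paper omits.
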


\begin{proof}
We first describe the limit tree. Let $\eta$ denote the limit distribution of 
$(\lambda_i^n)_{i\ge 2}$ under $q_n$, as $n\to\infty$. Let $(\bs^i)_{i\ge 0}$
denote of sequence of i.i.d.\ random variables distributed like $\eta$, where
$\bs^i=(s^i_j)_{j\ge 1}$, for $i\ge 0$. Note that, since the convergence holds in $\ell^1_\downarrow$, 
and since $\bs^i$ is a sequence of integers, then,
with probability one, there exists $\xi(i)<\infty$ such that $s^i_j=0$ for $j\ge \xi(i)$.
Consider the tree $T$ constructed as follows: there is a unique half-infinite 
path $u_i$, $i\ge 0$, and $T$ is rooted at $u_0$. Aside from $u_{i+1}$, the node $u_i$ 
has $\xi(i)$ extra children $v^i_j$, $j=1,\dots, \xi(i)$. Then, for $i\ge 0$ and 
$1\le j\le \xi(i)$, the node $v^i_j$ is the root of a tree $T^i_j$ which is independent 
of everything else, and distributed like a $\bq$-fragmentation tree on $s^i_j$ leaves. 
It should be clear that the tree $T$ we have just described is indeed the local 
limit of a sequence of trees $\bq$-fragmentation trees $T^n$ on $n$ leaves.
\end{proof}

We note that Proposition~\ref{prop:frag_trees} applies in particular in the case of the 
alpha-gamma model of \citet{ChFoWi2009} provided that $\gamma>0$. 
The model also encompasses the trees defined by \citet{Ford2005} and the discrete stable trees 
of \citet{Marchal2008}.

The alpha-gamma model introduced in \cite{ChFoWi2009} is a random process on the space of  
leaf-labelled trees.
There are two parameters: $\alpha\in [0,1]$ and $\gamma\in [0,\alpha]$. For $n=1$ and 
$n=2$, there is a unique $n$-leaf-labelled tree (see Figure~\ref{fig:alpha_tree}). 
Given that $T_n$ has been constructed
we assign weight $1-\alpha$
to each of the $n$ edges adjacent to the leaves, a weight $\gamma$ to each of the other 
edges and weight $(k-1)\alpha-\gamma$ to each vertex of degree $k+1\ge 3$. So there is 
a total weight of $n-\alpha$. Then, an element ---a vertex $v_n$ or an edge 
$\{a_n,c_n\}$---
is picked randomly according to the weight distribution and the tree $T_{n+1}$ is 
constructed as follows:
\begin{itemize}
	\item if we picked a vertex $v_n$, then we add the leaf $n+1$ and the edge $\{v_n,n+1\}$;
	\item if we picked an edge $\{a_n,c_n\}$ we split it into two and attach the new leaf $n+1$ 
	to the midpoint. More formally, we replace $\{a_n,c_n\}$ by three edges $\{a_n,b_n\}$, 
	$\{b_n,c_n\}$ and $\{b_n,n+1\}$.
\end{itemize}

\begin{lem}If $(\lambda_i^n)_{i\ge 1}$ is distributed 
according to $q_n^{\alpha,\gamma}$ and if $0<\gamma \leq \alpha \leq 1$,
then $(\lambda_i^n)_{i\ge 2}$ converges in distribution 
with respect to $\ell^1_\downarrow$ as $n\to\infty$.
\end{lem}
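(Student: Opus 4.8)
The plan is to understand the distribution $q_n^{\alpha,\gamma}$ of the block sizes at the root of the $n$-leaf alpha-gamma tree and show that, after discarding the largest block $\lambda_1^n$ (which absorbs almost all of the mass $n$), the remaining sizes stabilise. First I would recall from \cite{ChFoWi2009} the Markovian description of the process and, crucially, its exchangeable structure: the alpha-gamma tree can be built by a recursive splitting whose root split has a law closely related to a Chinese-restaurant-type or Pólya-urn construction. Concretely, one can track, as leaves $1,2,\dots,n$ are inserted, which of them fall in the ``spine'' block containing leaf $1$ versus the other blocks hanging at the root; because $0<\gamma\le\alpha$, each non-spine block, once created, grows only through its own internal weight, and the number and sizes of such blocks converge almost surely as $n\to\infty$. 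This is the mechanism that produces a single infinite end in the limit (one block keeps eating a positive fraction — in fact all but $O(n^{1-\varepsilon})$ — of the incoming leaves), with the other blocks settling to finite random sizes.

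The key steps, in order, would be: (i) write down $q_n^{\alpha,\gamma}$ explicitly, or at least a manageable recursion/asymptotic for $\mathbb P((\lambda_i^n)_{i\ge 2}=(m_2,m_3,\dots))$, using the known sequential construction; (ii) show $\lambda_1^n/n\to 1$ (equivalently $n-\lambda_1^n=\sum_{i\ge2}\lambda_i^n$ is tight), by a supermartingale or direct-computation argument on the non-spine mass, exploiting that the probability a new leaf joins a fixed finite block or starts a new block is $O(1/n)$ while joining the giant block has probability $1-O(1/n)$; (iii) upgrade tightness to convergence in distribution in $\ell^1_\downarrow$ by identifying the limit: the non-spine part of the root split converges to the sequence of final sizes of the blocks spawned in the ``early'' phase of the urn, which is an a.s.\ limit of a consistent family, hence a genuine $\ell^1_\downarrow$-valued random variable $\eta$ (finitely supported a.s., since only finitely many blocks ever appear with the spine absorbing the rest). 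For (iii) the $\ell^1$ (as opposed to weak-on-coordinates) convergence is exactly where tightness of the tail $\sum_{i\ge j}\lambda_i^n$ is needed, and that follows from the size-decay estimates analogous to \eqref{eq:otter_dist} once one has moment control on individual block sizes.

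I expect the main obstacle to be step (iii): coordinatewise convergence of $(\lambda_i^n)_{i\ge2}$ is comparatively easy from the explicit split probabilities, but promoting it to convergence in the $\ell^1_\downarrow$ norm requires ruling out mass escaping to infinitely many small blocks — i.e.\ a uniform integrability / tightness statement for the tail sum $\sum_{i\ge j}\lambda_i^n$, uniformly in $n$. The cleanest route is to bound $\mathbb E[(n-\lambda_1^n)\wedge M]$ or a suitable truncated moment via the insertion dynamics (each leaf contributes to the non-spine mass with probability $O(1/n)$, and the contributions are weakly enough correlated), giving a bound independent of $n$; combined with the coordinatewise limit and Scheffé-type reasoning in $\ell^1_\downarrow$, this yields the claimed convergence in distribution, and hence, via Proposition~\ref{prop:frag_trees}, that the alpha-gamma fragmentation tree converges locally to a tree with a single end.
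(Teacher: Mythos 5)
Your high-level criterion is the right one, and it is the same as the paper's: convergence in distribution in $\ell^1_\downarrow$ follows from coordinatewise convergence of $(\lambda_2^n,\lambda_3^n,\dots)$ together with tightness of the total non-spine mass $\sum_{i\ge 2}\lambda_i^n$. But the mechanism you propose for establishing and identifying the limit --- a P\'olya-urn/Chinese-restaurant dynamics in which the non-spine blocks of the root split ``converge almost surely'' to their final sizes --- does not work. In the sequential construction, the edge between the root and the first branch point is an internal edge and therefore carries weight $\gamma>0$; it is selected at step $n$ with probability $\gamma/(n-\alpha)$, and since these probabilities are not summable this happens infinitely often almost surely (by the conditional Borel--Cantelli lemma). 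Each such insertion creates a new first branch point and resets the root split to $(n,1)$: all previously existing non-spine blocks are absorbed into the giant block and a fresh singleton block appears. Consequently $(\lambda_i^n)_{i\ge 2}$ has no almost-sure limit, and the limit object cannot be realised as ``the final sizes of the blocks spawned in the early phase''; the convergence asserted in the lemma is genuinely only distributional, coming from an asymptotic stationarity of the split law, not from pathwise stabilisation. Your tightness step (ii) is also only sketched and would need to accommodate these resets, though that part is repairable.

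The paper sidesteps the dynamics entirely. It starts from the closed-form expression for $q_n^{\alpha,\gamma}$ (Proposition~10 of \cite{ChFoWi2009}, Equation~\eqref{eq:alpha-gamma}), fixes $(n_2,\dots,n_k)$ with $\tilde n=n_2+\dots+n_k$, and computes the pointwise limit of $q_n^{\alpha,\gamma}(n-\tilde n,n_2,\dots,n_k)$ as $n\to\infty$ (Equation~\eqref{eq:qnag}). It then recognises the limit as the product $\mu(\tilde n)\times p_{\alpha,-\gamma}(m_1,\dots,m_{\tilde n})$ of a negative-binomial-type law for the total non-spine mass and the Ewens--Pitman sampling formula for its partition into blocks; verifying that these weights sum to one shows that no mass escapes, which simultaneously delivers the tightness of $\sum_{i\ge 2}\lambda_i^n$ and identifies the limit law. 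If you want to keep a probabilistic flavour, you would need to replace your a.s.-convergence argument by a distributional one (for instance a regeneration argument at the reset times), but the direct computation is shorter and is what the paper does.
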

\begin{proof}
To prove convergence in distribution in $\ell^1_\downarrow$, it suffices to prove that 
$(\sum_{i\ge 2}\lambda_i^n)_{n\ge 1}$ is tight, and that $(\lambda_2^n,\lambda_3^n,\dots)_{n\ge 1}$ converges
in distribution for the product topology. 
The split distributions $(q_n^{\alpha,\gamma})_{n\ge 1}$ induced by the 
alpha-gamma model are given in Proposition 10 of \cite{ChFoWi2009}:
for all integers $k, n_1, \ldots, n_k$ such that $n_1 + \ldots, +n_k = n$,
\begin{align}
& q_n^{\alpha,\gamma}(n_1,\dots, n_k) \notag\\
& = \frac{\Gamma(1-\alpha)}{\Gamma(n-\alpha)}
\left(\gamma + \frac{1-\alpha-\gamma}{n(n-1)} \sum_{i\ne j} n_i n_j\right)
\frac {\binom{n}{n_1,\dots, n_k} } {m_1! \dots m_n!} 
\alpha^{k-2} \frac{\Gamma(k-1-\gamma/\alpha)}{\Gamma(1-\gamma/\alpha)}
\prod_{j=1}^k \frac{\Gamma(n_j-\alpha)}{\Gamma(1-\alpha)},\label{eq:alpha-gamma}
\end{align}
where $m_i=\#\{j: n_j=i\}$, for $1\le i\le n$.
For every fixed $k\ge 1$, and $(n_2,\dots, n_k)$ note that, writing 
$\tilde n=n_2+n_3+\dots+n_k$,
we can rewrite $q_n^{\alpha,\gamma}(n-n_2-\dots-n_k, n_2,\dots, n_k)$ as
$$
q_n^{\alpha,\gamma}(n-\tilde n, n_2,\dots, n_k)
= c_n 
\left(\gamma + \frac{1-\alpha-\gamma}{n(n-1)} \sum_{i\ne j} n_i n_j\right)
\frac {\alpha^{k-2}} {m_1! \dots m_n!}
\frac{\Gamma(k-1-\gamma/\alpha)}{\Gamma(1-\gamma/\alpha)}
\prod_{j=2}^k \frac{\Gamma(n_j-\alpha)}{\Gamma(1-\alpha)}
$$
where, as $n\to\infty$,
$$
c_n = \binom{n}{n-\tilde n, n_2, \dots, n_k} 
\frac{\Gamma(n-\tilde n-\alpha)}{\Gamma(n-\alpha)}
\sim \frac 1 {n_2! \cdots n_k!}.
$$
It follows that 
\begin{equation}\label{eq:qnag}
q_n^{\alpha,\gamma}(n-\tilde n, n_2,\dots, n_k)
\sim 
\frac{\gamma}{n_2! \cdots n_k!} 
\frac {\alpha^{k-2}} {m_1! \dots m_n!}
\frac{\Gamma(k-1-\gamma/\alpha)}{\Gamma(1-\gamma/\alpha)}
\prod_{j=2}^k \frac{\Gamma(n_j-\alpha)}{\Gamma(1-\alpha)}.
\end{equation}
Now to complete the proof, it suffices to check that the right-hand side above 
indeed defines a probability distribution on the set of non-increasing sequences 
of integers. 
To this aim, first observe that for all $n$ large enough,
one has $m_1!m_2!\cdots m_{n}!= m_1! m_2!\cdots m_{\tilde n}!$. Then, we can 
further rewrite \eqref{eq:qnag} as
\begin{equation}\label{eq:frag_conv-dist}
q_n^{\alpha,\gamma}(n-\tilde n, n_2,\dots, n_k)
\sim 
\mu(\tilde n) \times p_{\alpha,-\gamma}(m_1,m_2,\dots, m_{\tilde n}),
\end{equation}
where
$$
\mu(\tilde n)=\gamma \frac{\Gamma(\tilde n-\gamma)}{\Gamma(1-\gamma)\tilde n!}
$$
and
$$
p_{\alpha,\theta}(m_1,m_2,\dots, m_{\tilde n})
= \frac{\tilde n!}{\prod_{i=1}^n (i!)^{m_i}m_i!} 
\alpha^{k-1} \frac{\Gamma(k+\theta/\alpha)/\Gamma(1+\theta/\alpha)}
{\Gamma(\tilde n+\theta)/\Gamma(1+\theta)}
\prod_{j=1}^{\tilde n} \left( \frac{\Gamma(j-\alpha)}{\Gamma(1-\alpha)}\right)^{m_j}.
$$
Now, $p_{\alpha,\theta}(m_1,\dots, m_{\tilde n})$ is the probability distribution 
associated to Pitman's generalization of the Ewens sampling formula, see Proposition~9
of \cite{Pitman1995a}. Finally, for any $\gamma>0$,
$$
\sum_{\tilde n\ge  1} \mu(\tilde n) 
= \sum_{\tilde n\ge 1}
\gamma \frac{\Gamma(\tilde n-\gamma)}{\Gamma(1-\gamma)\tilde n!}
= \sum_{\tilde n \ge  1}
\frac{\Gamma(\tilde n-\gamma)}{-\Gamma(-\gamma)\tilde n!}
= 1
$$
and $(\mu(i))_{i\ge 0}$ is also a probability distribution related to
the negative binomial distribution. This shows in particular that 
$(\sum_{i\ge 2} \lambda_i^n)_{n\ge 1}$ converges in distribution to $\mu(\tilde n)$, 
which, together with \eqref{eq:frag_conv-dist}, completes the proof.
\end{proof}

\section{Galton--Watson trees: improved Lefmann and Savick{\'y} bounds}\label{sec:improved_bounds}

In this section we obtain the Lefmann and Savick{\'y}~\cite{LeSa1997} bounds by a branching argument, 
improve them via a very simple symmetry argument, and extend them to all Galton--Watson trees with 
progenies having exponential tails. 
This is also the occasion to introduce the \emph{trimming procedure} that will be crucial in the 
remainder of the paper (Section~\ref{sec:bounds}).

\subsection{A symmetry argument}
Let us recall
Equation~\eqref{eq:LeSa-bounds}, by~\citet{ChFlGaGi2004} which states that 
if $T_n$ is uniformly distributed among binary trees having $n$ leaves,
then there exist two constants $c_1,c_2>0$, 
for all $k\geq 1$, for any $k$-variable Boolean function~$f$
\begin{equation}\label{eq:general_bounds}
\exp(-c_1 L(f) \log k) \le P_k[T_n](f) \le \exp(- c_2 L(f) k^{-2}).
\end{equation}
The bound in \eqref{eq:general_bounds} suffers from two main problems: first
it does not constrain in any way the probability of
functions of complexity of order $o(k^2)$;
and second, it has only been proved for the case of binary trees. 
In the following, we tighten the upper bound and generalize it to any critical Galton--Watson tree
conditioned on being infinite, under the condition that the offspring distribution has 
exponential moments. 
A simple observation also allows us to strengthen the 
upper bound above.

First we need to define the notion of essential variables: 
\begin{defn}
Fix an integer $k$.
Let $f$ be a $k$-variable Boolean function: $f\;:\; (x_1, \ldots, x_k)\mapsto f(x_1, \ldots, x_k)$. 
For all $1\leq i\leq k$, we say that the variable $x_i$ is {\bf essential} for $f$ if and only if
$f_{|x_i=0} \neq f_{|x_i=1}$ 
(meaning that the restriction of $f$ to the subspace where $x_i=1$ is not the same Boolean function as
the restriction of $f$ to the subspace where $x_i=0$).
\end{defn}

\begin{thm}\label{thm:improved_gen_bound}
Suppose that $\Ec{\xi}=1$, $\pc{\xi=1}=0$, and that there exists $a>0$ such that $\Ec{e^{a \xi}}<\infty$. 
Let $T_\infty$ be a Galton--Watson tree with offspring distribution $\xi$, conditioned on being infinite. 
Then, there exists constants $c_1,c_2>0$ such that, 
for every $k\ge 2$,
for any $k$-variable Boolean function~$f$, we have 
\begin{equation}\label{eq:LeSa-bounds-improved}
\exp(-c_1 L(f) \log k) 
\leq P_k[T_\infty](f) 
\leq \exp\left(- c_2 L(f) k^{-2} - \log \binom k {\Ess(f)}\right),
\end{equation}
where $\Ess(f)$ is the number of essential variables of $f$.
\end{thm}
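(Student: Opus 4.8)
The plan is to prove the two bounds separately, as they are of different nature. The lower bound $\exp(-c_1 L(f)\log k)\le P_k[T_\infty](f)$ follows from a branching argument: fix a minimal tree $\tau$ of $f$ with $L(f)$ leaves, and show that with probability bounded below by $\exp(-c L(f)\log k)$ the random labelling of a suitable finite portion of $T_\infty$ near the root produces (a tree equivalent to) $\tau$. Concretely, since $T_\infty$ contains a spine with i.i.d.\ Galton--Watson bushes hanging off, with probability bounded away from zero there is a node early on the spine whose subtree happens to coincide with the shape of $\tau$ (or contains it as the ``active'' part after the trimming procedure kills off the irrelevant branches); conditionally on the shape being right, each of the $L(f)$ leaves must receive the correct literal among $2k$ choices, and each internal node the correct connective among $2$, giving a factor at least $(2k)^{-L(f)}2^{-(\|\tau\|-1)}=\exp(-\Theta(L(f)\log k))$. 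The exponential moment hypothesis on $\xi$ is what guarantees that such a finite subtree of the correct shape appears near the root with probability bounded below independently of $k$ (up to constants absorbed into $c_1$); this is where I would invoke the local limit description from Section~\ref{sub:GW} together with the trimming procedure announced in this section.

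For the upper bound, the key is the decomposition
\[
P_k[T_\infty](f)\le \exp\big(-c_2 L(f)k^{-2}\big)\cdot \binom{k}{\Ess(f)}^{-1}.
\]
The first factor $\exp(-c_2L(f)k^{-2})$ is the ``known'' type of bound: I would obtain it exactly as in \cite{ChFlGaGi2004}, i.e.\ by a second-moment / spine argument showing that if $\of[\hat T_\infty]=f$ then along the spine the hanging forests must repeatedly fail to ``absorb'' the function, and each such failure costs a factor bounded away from $1$; counting how many spine-levels are forced gives a bound in terms of $L(f)$, with the $k^{-2}$ coming from the probability that a single random literal is relevant. The adaptation from the Catalan case to a general Galton--Watson offspring distribution with exponential moments is routine once one has the trimming procedure, since the bushes are still i.i.d.\ and of exponentially-controlled size.

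The genuinely new ingredient — and the step I expect to be the main obstacle — is the extra factor $\binom{k}{\Ess(f)}^{-1}$, obtained by the ``very simple symmetry argument'' advertised in the introduction. The idea is that the distribution $P_k[T_\infty]$ is invariant under the action of $S_k$ permuting the $k$ variables (since the leaf labels are chosen uniformly and i.i.d.\ from $\{x_1,\bar x_1,\dots,x_k,\bar x_k\}$, any relabelling of variables is measure-preserving). Hence all $\binom{k}{\Ess(f)}$ functions obtained from $f$ by choosing which $\Ess(f)$ of the $k$ variables are the essential ones (and keeping the same ``pattern'') have the same probability under $P_k[T_\infty]$, and these are pairwise distinct functions; since probabilities sum to at most $1$, each is at most $\binom{k}{\Ess(f)}^{-1}$. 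Combining this with the first factor via $P_k[T_\infty](f)\le\min\{\,\cdot\,,\,\cdot\,\}\le\sqrt{(\cdot)(\cdot)}$ — or more simply by noting that the two bounds can be applied to the same $f$ and one may take the product after a harmless constant adjustment — yields \eqref{eq:LeSa-bounds-improved}. The subtlety to get right is that the orbit of $f$ under the variable-permutation action really has size $\binom{k}{\Ess(f)}$ times (a divisor of) $\Ess(f)!$, so one must be careful to extract at least the factor $\binom{k}{\Ess(f)}$ and not over-count; taking the logarithm as in the statement sidesteps the need to track the finer $\Ess(f)!$ improvement.
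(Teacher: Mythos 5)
Your overall architecture matches the paper's: the lower bound is the branching/labelling argument of Lefmann--Savick\'y (which the paper simply cites without reproducing), the factor $\exp(-c_2L(f)k^{-2})$ comes from exponential control of the trimmed tree's size, and the new ingredient is the symmetry of $P_k[T_\infty]$ under permutations of the variables, giving an orbit of at least $\binom{k}{\Ess(f)}$ distinct functions of equal probability. (One remark on the middle step: the paper does not ``adapt'' \citet{ChFlGaGi2004} as you suggest, but proves from scratch, via the trimming procedure and a two-type subcritical branching-process domination with Cram\'er's theorem, that $\p{\|\trim(\hat T_\infty)\|\ge \varpi}\le e^{-c\varpi/k^2}$ for general offspring laws with exponential moments and $\pc{\xi=1}=0$; calling this ``routine'' undersells a real piece of work, but it is not a logical gap in your outline.)

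The genuine gap is in how you combine the two upper bounds. You derive separately $P_k[T_\infty](f)\le A:=\exp(-c_2L(f)k^{-2})$ and $P_k[T_\infty](f)\le B:=\binom{k}{\Ess(f)}^{-1}$, and then propose to conclude via $\min\{A,B\}\le\sqrt{AB}$ or by ``taking the product after a harmless constant adjustment.'' Neither works: $P\le A$ and $P\le B$ never imply $P\le AB$ (for $A,B\le 1$ the product is a \emph{stronger} claim than either bound), and $\sqrt{AB}$ only yields the exponent $-\tfrac{c_2}{2}L(f)k^{-2}-\tfrac12\log\binom{k}{\Ess(f)}$; the factor $\tfrac12$ on the binomial term cannot be absorbed into a constant, since the statement fixes that coefficient to be $1$. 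The correct combination is a single step: every function $f\circ\pi$ in the orbit has the \emph{same} complexity $L(f\circ\pi)=L(f)$, so each of the $\binom{k}{\Ess(f)}$ pairwise disjoint events $\{\of[\hat T_\infty]=f\circ\pi\}$ is contained in the one event $\{\|\trim(\hat T_\infty)\|\ge L(f)\}$. Summing the (equal) probabilities of these disjoint events and bounding the containing event gives
\[
\binom{k}{\Ess(f)}\,P_k[T_\infty](f)\;\le\;\p{\|\trim(\hat T_\infty)\|\ge L(f)}\;\le\;\exp\bigl(-c_2L(f)k^{-2}\bigr),
\]
which is exactly the product bound you want. In other words, the symmetry argument must be run \emph{inside} the size-tail bound, not alongside it; your version, which only uses that the orbit probabilities sum to at most $1$, loses the $\exp(-c_2L(f)k^{-2})$ factor at the moment you try to reinsert it.
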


This section is devoted to achieving two main goals: we first prove by a symmetry argument 
the improved upper bound of Theorem~\ref{thm:improved_gen_bound} relying on the looser upper 
bound proved by~\citet{ChFlGaGi2004}; we then propose in Section~\ref{sub:alt_proof} a simpler 
and more general proof for~\citet{ChFlGaGi2004}'s result that applies to a wider class 
of Galton--Watson trees.

Let us first improve \citet{ChFlGaGi2004}'s upper bound.
Their proof relies on the apparently blunt upper bound 
\[
\bbP(\of[\tau] = f)\leq \bbP(\|\tau\|\geq L(f)),
\]
for all $k$-variable Boolean trees $\tau$ and all $k$-variable Boolean functions.
This inequality is loose since $\p{\|\tau\|\ge L(f)}$ actually 
bounds the probability of the collection of functions which may be obtained 
from $f$ by permutations of the variables. 
More precisely, consider any Boolean function $f$ with $\gamma:=\Ess(f)$ essential variables.
Assume without loss of generality that the essential variables of $f$ are $x_1, \ldots, x_{\gamma}$.
Consider a Boolean function $f$ and a permutation $\pi$ of $(x_1, \ldots, x_k)$ 
which does not map $\{x_1, \ldots, x_{\gamma}\}$ into itself. 
Then, by symmetry, the Boolean function $f\circ \pi$ has the same probability as $f$. 
Furthermore, for any such permutation $\pi$ the functions $f\circ \pi$ 
and $f$ are distinct. Thus, we actually have, for all $k\geq 1$
\[
\binom{k}{\Ess(f)} \bbP(\of[\tau] = f)
\leq \bbP(\|\tau\|\geq L(f)) 
\leq \exp(- c_2 L(f) k^{-2}).
\]
Applying this inequality to $\tau = T_{\infty}$, and
writing $\mathbb P(f):=\bbP_k[T_\infty](f)$, we get
\[\bbP(f) \leq 
\exp\left(- c_2 L(f) k^{-2} - \log \binom{k}{\Ess(f)}\right).
\]
Moreover, it is interesting to note that this new upper-bound happens to be optimal in some 
cases, at least at the level of exponents, since it is achieved for read-once functions 
(see~\cite[page~25]{BooleanFunctionsBook} for a definition of such functions):
Consider a 
$k$-variable read-once function~$f$,
meaning in particular that $L(f)=\Ess(f)$, 
and suppose further that $L(f)=\Ess(f)= o(k)$. 
Then, in view of Equation~\eqref{eq:LeSa-bounds-improved}, there exists a constant $c_3$ such that
$$
\exp(-c_1 L(f) \log k) 
\leq \bbP(f) \leq 
\exp(-c_3 L(f) \log k),
$$
so that, neither the upper nor the lower bound can be significantly improved without 
considering exponents that would depend on other parameters than the mere 
complexity~$L(f)$.

\subsection{Proof of Theorem~\ref{thm:improved_gen_bound}: the trimming procedure}\label{sub:alt_proof}
The lower bound is exactly the one from Theorem~1.1 of \cite{LeSa1997} and we shall not reproduce the argument.
Our proof of the upper bound relies on a refined analysis of a certain 
trimming procedure, which removes the portions of the tree that do not influence the Boolean 
function it encodes. The cutting procedure is similar to the one used by \citet{ChFlGaGi2004}, 
but modified in order to simplify the analysis but also to make it more powerful. 
(The interested reader can easily verify 
that our procedure removes more nodes than the one in~\cite{ChFlGaGi2004}.) 

Consider an and/or tree $\tau$. 
Let $\diamond_u$ denote the label of a node $u$. 
We also let $\Lambda(u)$ denote the collection of children of $u$ that are leaves.
Given a leaf $w$, we denote its label by $\mathtt{Lab}(w)$.
We associate a set of {\bf constraints} to every node, 
such that the sequences of sets seen when following paths 
away from the root are increasing (for the inclusion). 
Inductively define the constraints sets for all node using the following rules:
for the root $r$, we set $C_r:=\emptyset$;
for a node $v$ that is a child of $u$,
\begin{itemize}
	\item if $u$ is labelled by `and' then 
	$C_v = C_u \cup \bigcup_{w\in \Lambda(u)} \{\Lab(w)=\True\}$;
	\item if $u$ is labelled by `or' then 
	$C_v=C_u \cup \bigcup_{w\in \Lambda(u)} \{\Lab(w)=\False\}$.
\end{itemize}
We say that a node is {\bf consistent} if there exists an assignment 
of the variables satisfying all the constraints of its constraint set;
note that all descendants of a non-consistent node are also non-consistent.
We denote by $\trim(\tau)$ the labelled tree
obtained from $\tau$, by keeping only the nodes that are consistent 
(an example is given in Figure~\ref{fig:ex_trim}).
For a subtree $\tau'$ of $\tau$, we let $\trim(\tau')$ denote the portion of $\trim(\tau)$
that is in $\tau'$ (with its labels).

\begin{figure}
\begin{center}
\includegraphics[width=.7\textwidth]{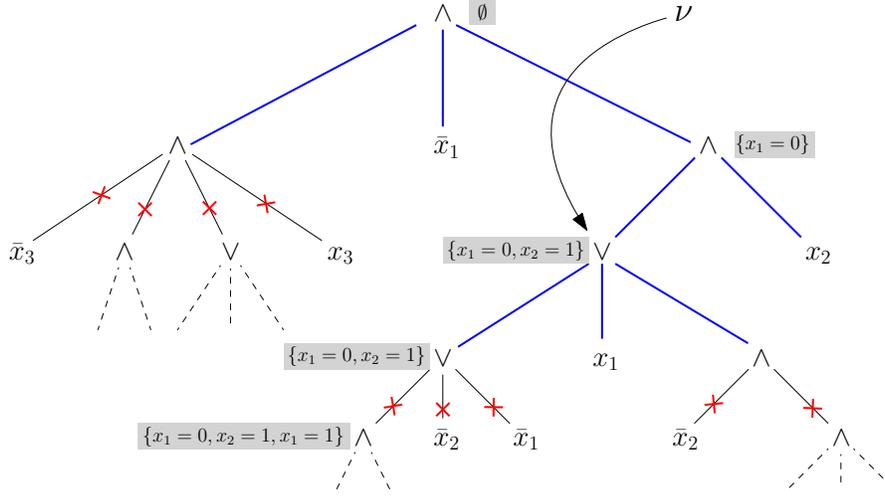}
\caption{The above tree is an example of an and/or tree $\tau$ to which we apply 
the trimming procedure $\trim$. The dashed parts are possibly infinite and/or trees. 
The tree $\trim(\tau)$ is highlighted in bold and in blue. 
The constraint sets of some nodes have been added in the shaded boxes; 
note that due to space constraints, $\True$ is represented by $1$ and $\False$ by $0$.
For example, consider the node $\nu$ above. The parent of~$\nu$ has constraint set $\{x_1 = 0\}$,
and a sibling of $\nu$ is a leaf labelled by the literal $x_2$, 
then, since the parent of $\nu$ is labelled by $\land$, its constraint set is $\{x_1 = 0\}\cup \{x_2 = 1\}$.}
\label{fig:ex_trim}
\end{center}
\end{figure}

In the procedure of \citet{ChFlGaGi2004}
the label of the leaf $v$ does not affect the constraint set of $v$ itself.
This is now possible in this improved trimming procedure.
Also, as a consequence of the definition, if any node $v$ is inconsistent, so are all 
its siblings. It follows that some internal nodes of $\tau$ end up having no 
progeny in $\trim(\tau)$ (see Figure~\ref{fig:ex_trim}), and thus become leaves of $\trim(\tau)$. 
For these nodes, we adopt the convention that a leaf of $\trim(\tau)$ that is 
an internal node in~$\tau$ labelled by~$\land$ (resp.~$\lor$) has Boolean value 
$\False$ (resp. $\True$). We first verify that this trimming procedure does not 
modify the Boolean function that the tree represents:

\begin{lem}\label{lem:trim+}
For every and/or tree $\tau$, 
$\trim(\tau)$ calculates the same Boolean function as $\tau$.
\end{lem}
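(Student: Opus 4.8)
The plan is to prove Lemma~\ref{lem:trim+} by structural induction on the tree $\tau$, tracking a stronger invariant than the plain statement: namely, that for \emph{every} node $v$ of $\tau$ and every assignment $a\in\{0,1\}^k$ that satisfies all the constraints in $C_v$, the subtree of $\tau$ rooted at $v$ and the subtree of $\trim(\tau)$ rooted at $v$ (with the leaf-conventions $\land\mapsto\False$, $\lor\mapsto\True$ for trimmed-away internal nodes) evaluate to the same Boolean value under $a$. Since the root has constraint set $C_r=\emptyset$, which every assignment satisfies, applying the invariant at $r$ yields the lemma. Note also that if $v$ is inconsistent then $C_v$ is unsatisfiable, so the invariant is vacuous at $v$; this is what lets us discard inconsistent subtrees harmlessly.

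First I would set up the induction. The base case is a leaf $w$ of $\tau$ with $\Lab(w)$ a literal $\ell$; here $\trim$ does nothing to $w$ (a leaf is always consistent unless its parent made it so, but the leaf's own label does not enter $C_w$ in the improved procedure), so both trees evaluate to the value of $\ell$ under $a$, and there is nothing to check. For the inductive step, let $u$ be an internal node, say labelled $\land$ (the $\lor$ case is dual under negation), with children $v_1,\dots,v_r$. Fix an assignment $a$ satisfying $C_u$. Split the children into the leaves $\Lambda(u)$ and the internal children. By the trimming rule, $C_{v_i}=C_u\cup\bigcup_{w\in\Lambda(u)}\{\Lab(w)=\True\}$ for every child $v_i$.

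Now there are two cases. If $a$ satisfies every constraint $\{\Lab(w)=\True\}$ for $w\in\Lambda(u)$ — equivalently, all leaf-children of $u$ evaluate to $\True$ under $a$ — then $a$ satisfies $C_{v_i}$ for each internal child $v_i$, so the induction hypothesis applies to each such $v_i$: the $\tau$-subtree and the $\trim(\tau)$-subtree rooted at $v_i$ agree under $a$. Since the leaf-children contribute $\True$ to the conjunction in both trees (they are untouched by $\trim$), the value of the $\land$ at $u$ is the same in $\tau$ and in $\trim(\tau)$, whichever internal children survive (an inconsistent child $v_i$ contributes, by convention, the value of its first surviving descendant, or its leaf-value; but the key point is that $a$ satisfying $C_u$ and all the leaf constraints still satisfies $C_{v_i}$, so the IH covers it). If instead some leaf-child $w\in\Lambda(u)$ has $\Lab(w)$ false under $a$, then the conjunction at $u$ in $\tau$ is already $\False$; and in $\trim(\tau)$ this leaf $w$ survives (it is still a consistent leaf, its own label not entering $C_w$), so the conjunction at $u$ in $\trim(\tau)$ is also $\False$. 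Either way the values agree, completing the step.

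The main obstacle I anticipate is the bookkeeping around nodes that become leaves of $\trim(\tau)$ — that is, internal nodes of $\tau$ all of whose children are inconsistent — and making sure the convention ($\land\mapsto\False$, $\lor\mapsto\True$) is exactly the right one to preserve the value. The point to get right is: if $u$ is labelled $\land$ and every child $v_i$ is inconsistent, then for the $\tau$-value at $u$ to be well-defined one still evaluates the conjunction over the children's subtrees; but inconsistency of $v_i$ means $C_{v_i}=C_u\cup\{\text{leaf constraints}\}$ is unsatisfiable, which, given that $a$ satisfies $C_u$, forces some leaf-child of $u$ to be false under $a$, hence the conjunction in $\tau$ is $\False$ — matching the convention. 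So the convention is not an arbitrary tie-break but is forced by the very inconsistency that triggered the trimming, and spelling that implication out carefully is the crux of the argument.
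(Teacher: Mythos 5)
Your proof is correct, but it is organized quite differently from the paper's. The paper argues by local surgery: it picks an inconsistent node $\nu$, exhibits the two ancestors $\nu_1,\nu_2$ whose leaf-children generate the conflicting constraints on some variable $x$, and checks that under either value of $x$ one of the subtrees rooted at $\nu_1$ or $\nu_2$ is already constant, so that excising $\nu$ and its progeny cannot change the function; one then removes inconsistent nodes one at a time. You instead run a single bottom-up structural induction with the strengthened invariant ``for every $a$ satisfying $C_v$, the subtrees of $\tau$ and of $\trim(\tau)$ rooted at $v$ agree at $a$,'' which is vacuous at inconsistent nodes and gives the lemma at the root since $C_r=\emptyset$. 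The underlying insight is the same ($C_v$ records exactly the assignments under which the subtree at $v$ can matter), but your version is more systematic and, unlike the paper's proof, explicitly verifies that the convention assigning $\False$ (resp.\ $\True$) to an $\land$-node (resp.\ $\lor$-node) whose children have all been trimmed is forced rather than arbitrary. Your key observation that makes the induction close --- all children of $u$ receive the \emph{same} constraint set, so either all survive (and the induction hypothesis applies to each) or all are removed (and unsatisfiability of $C_u\cup\{\text{leaf constraints}\}$ together with $a\models C_u$ forces some leaf-child of $u$ to falsify the connective, matching the convention) --- is exactly right.

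One factual slip: you assert twice that a leaf's own label does not enter its own constraint set. In the paper's procedure it does --- the union $\bigcup_{w\in\Lambda(u)}\{\Lab(w)=\cdot\}$ is added to $C_v$ for \emph{every} child $v$ of $u$, including the leaves themselves, and the paper points out that this is precisely the improvement over the earlier trimming of Chauvin et al. Consequently, in your Case~2 the violating leaf $w$ need not survive in $\trim(\tau)$. This does not break the argument, because $w$ is inconsistent exactly when all of its siblings are, which is the situation your final paragraph already handles via the convention; you should simply replace the claim ``$w$ survives'' by the dichotomy ``either the sibling group is consistent, in which case $w$ survives and contributes $\False$, or the whole group is trimmed and the convention applies.''
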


\begin{proof}
Let $\nu$ be an inconsistent node of $\tau$. 
Let us prove that the tree obtained by cutting $\nu$ and all its progeny from $\tau$ 
calculates the same Boolean function as $\tau$.
The fact that $\nu$ is inconsistent means that 
there exist two internal nodes $\nu_1$ and $\nu_2$ 
on the path between the root of $\tau$ and $\nu$ such that
assigning $x$ to $\False$ makes the tree rooted at $\nu_1$ calculate a constant function 
(more precisely $\True$ if $\nu_1$ is labelled by $\lor$ or $\False$ otherwise)
and assigning $x$ to $\True$ makes the tree rooted at $\nu_2$ calculate a constant function.
Note that the restriction on $\{x=\False\}$ (resp. $\{x=\True\}$) of the Boolean 
function calculated by $\tau$ and of the Boolean function calculated by the tree obtained 
from $\tau$ by cutting all progeny of $\nu_1$ (resp.~$\nu_2$) are equal.
Let us assume for example that $\nu_1$ is an ancestor of $\nu_2$ 
(note that our reasoning also holds when $\nu_1=\nu_2$). 
Then, the tree obtained by cutting all progeny of $\nu_2$ calculates the same Boolean 
function as $\tau$, implying the result.
\end{proof}

Finally, we also define the {\bf size} of $\trim(\tau)$ as the number of its leaves that 
are labelled by literals in $\{x_1,\bar x_1, \dots,x_k, \bar x_k\}$ (or the number of 
leaves that were already leaves in $\tau$). With this definition, we see that 
the functions $\True$ and $\False$ are computed by trees of size zero (a single 
internal node labelled by $\lor$ or $\land$), which agrees with our previous 
convention that they should have complexity $L(\True)=L(\False)=0$.

\begin{prop}\label{pro:bound-trimmed-GW}
Let $\xi$ be an integer-valued random variable such that $\Ec\xi=1$ and $\pc{\xi=1} = 0$. 
Suppose further that there
exists an $a>0$ such that $\Ec{e^{a \xi}}<\infty$. Let $T_\infty$ be a Galton--Watson tree 
with offspring distribution~$\xi$, conditioned on being infinite.
Recall that $\hat T_{\infty}$ stands for the randomly $k$-variable labelled version of $T_{\infty}$.
Then, there exists a constant $c>0$ such that, for any $\varpi>0$, for any integer $k\geq 1$, we have
\[
\p{\|\trim(\hat T_{\infty})\| \ge  \varpi} \le \exp(-c  \varpi/k^2).
\]
\end{prop}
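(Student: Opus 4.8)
The plan is to track the trimmed tree $\trim(\hat T_\infty)$ along the spine of $T_\infty$, viewing its growth as a branching-type process that is killed at a geometric rate because of the accumulation of constraints. Recall the structure of $T_\infty$ from Section~\ref{sub:GW}: along the spine $(u_i)_{i\ge 0}$, the node $u_i$ has $\hat\xi_i\ge 1$ children (one being $u_{i+1}$), and the remaining $\hat\xi_i-1$ children are roots of i.i.d.\ unconditioned $\GW_\xi$ trees. The first observation is that the size of $\trim(\hat T_\infty)$ is controlled by a sum, over the spine levels $i$, of the contributions coming from the off-spine subtrees hanging at $u_i$, but only for those levels $i$ at which the spine node $u_i$ is still consistent. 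So I would write
\[
\|\trim(\hat T_\infty)\| \le \sum_{i\ge 0} \I{u_i \text{ consistent}} \sum_{j} \|\trim(\text{subtree at $j$-th off-spine child of }u_i)\|,
\]
plus a bounded boundary term, and then bound each ingredient separately.

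\textbf{Key steps.} First I would show the spine is cut off quickly: at each spine node $u_i$, conditionally on the past, there is a probability bounded below by some $p>0$ (depending on nothing, or controllable independent of $k$) that the label $\diamond_i$ together with the off-spine children's labels introduces a constraint of the form $\{\Lab(w)=\True\}$ or $\{\Lab(w)=\False\}$ on a leaf, \emph{and} a later spine node introduces the opposite constraint on the same literal, making all deeper spine nodes inconsistent — in fact the argument from Lemma~\ref{lem:one_spine}/Lemma~\ref{lem:trim+} shows that once two spine nodes with opposite connectives both have a leaf-child, the deeper one becomes a trimming endpoint. A cleaner route: each spine node with at least one leaf-child contributes one constraint; to stay consistent through height $m$ one needs these constraints to be jointly satisfiable, and since each is an independent uniform literal-assignment, the probability of surviving $m$ such events decays geometrically in the number of leaf-bearing spine nodes. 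Since $\pc{\xi=1}=0$ forces $\hat\xi_i\ge 2$, hence each $u_i$ has at least one off-spine child, and that child is a leaf with probability $p_0=\pc{\xi=0}>0$, a positive fraction of spine levels bear leaves; thus the consistent part of the spine has a length $H$ with $\p{H\ge m}\le e^{-c'm}$ for some $c'>0$ independent of $k$. (Here I must be careful that the relevant constraint involves a uniformly random literal among $2k$ choices, so two constraints clash with probability $\Theta(1/k)$ rather than $\Theta(1)$ — this is exactly where the $k^2$ will come from, see below.)

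\textbf{Where the $k^2$ enters.} The decay rate along the spine is not a fixed $c'$ but of order $1/k$: two independent constraint-literals are contradictory (same variable, opposite truth value) with probability $\tfrac1{2k}$, so the consistent spine has length $H$ with $\p{H\ge m}\le (1-\Theta(1/k))^{\Theta(m)}$, giving $\p{H\ge m}\le e^{-cm/k}$. Second, each off-spine subtree is an unconditioned critical $\GW_\xi$ tree, and one must bound the tail of $\|\trim(\hat T)\|$ for such a tree. I would prove (or invoke the analogue of the analysis that presumably precedes this proposition for critical GW trees) that $\|\trim(\hat T)\|$ for an unconditioned critical GW tree with exponential tails has exponential tail with rate $\Theta(1/k)$: the trimming kills an unconditioned subtree as soon as an internal constraint-clash occurs within it, which again happens at rate $\Theta(1/k)$ per unit of "internal branching", while the exponential moment of $\xi$ controls the branching from blowing up. Combining: $\|\trim(\hat T_\infty)\|$ is stochastically dominated by a sum of a geometric-with-parameter-$\Theta(1/k)$ number (the consistent spine length) of i.i.d.\ copies of $\|\trim(\hat T)\|$, each with exponential tail of rate $\Theta(1/k)$. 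A sum of $N$ i.i.d.\ random variables each with tail rate $\theta$, where $N$ itself is geometric with mean $\Theta(k)$, has a tail of rate $\Theta(\theta/k)=\Theta(1/k^2)$ — which yields $\p{\|\trim(\hat T_\infty)\|\ge\varpi}\le e^{-c\varpi/k^2}$. I would make this precise via a single exponential moment computation: show $\Ec{\exp(s\|\trim(\hat T_\infty)\|)}<\infty$ for $s=c/k^2$ with $c$ small enough, by conditioning on the spine, using the independence of the off-spine forests, the exponential moment bound on $\|\trim(\hat T)\|$ for unconditioned GW trees (valid for $s\le c_1/k^2$), and the geometric tail of the consistent spine length, then apply Markov's inequality.

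\textbf{Main obstacle.} The delicate point is making the "geometric rate $\Theta(1/k)$ for killing along the spine" rigorous, because the constraints accumulated along the spine are not independent across levels — a leaf-child at level $i$ contributes a constraint that interacts with \emph{all} later levels, and the off-spine subtrees also contribute their own internal constraints. I would handle this by a careful filtration argument: reveal the spine labels and off-spine structure level by level, and at each new level where a leaf-child appears, bound below by $\Theta(1/k)$ the conditional probability that the newly-forced constraint is inconsistent with the (finitely many, already-revealed) constraints forced so far — using that all literals are i.i.d.\ uniform over $2k$ choices and there is at least one earlier constraint whose connective is opposite (which happens with probability $\ge\tfrac12$ by the coin-flip independence of the $\diamond_i$, exactly as in Lemma~\ref{lem:one_spine}). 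Controlling the exponential moment of the \emph{unconditioned} GW trimmed size uniformly in $k$ with the right $1/k^2$ scaling is the other technical heart of the argument, and I expect it to be the step requiring the most work; it should follow a recursion on the generating function of $\|\trim(\hat T)\|$ analogous to the standard recursion for $\GW$ progeny, with the trimming providing an extra $\Theta(1/k)$ contraction factor and the exponential-moment hypothesis on $\xi$ ensuring the recursion's fixed point stays finite.
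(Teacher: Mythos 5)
Your overall architecture is the same as the paper's: decompose $T_\infty$ into the spine plus the i.i.d.\ unconditioned $\GW_\xi$ subtrees hanging off it, show that the consistent portion of the spine is killed at rate $\Theta(1/k)$ per level (so its length is geometric with mean $\Theta(k)$), bound the tail of $\|\trim(\hat T)\|$ for a single unconditioned subtree, and combine. The spine analysis you sketch (each spine node carrying a non-empty constraint set is made inconsistent with probability $\Theta(1/k)$, uniformly over the already-revealed constraints) is essentially the paper's, which handles the dependence you worry about by a stochastic domination with a simplified two-type branching process rather than a filtration argument.

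The genuine gap is your claimed tail rate for the off-spine subtrees: you assert that $\|\trim(\hat T)\|$ for an \emph{unconditioned} critical $\GW_\xi$ tree has exponential tail with rate $\Theta(1/k)$, and this is false. The trimming turns the relevant cluster into a branching process whose mean offspring is $1-\Theta(1/k)$ (each node is killed with an extra probability $\Theta(1/k)$), and the total progeny $\Xi$ of a branching process with mean $1-\delta$ satisfies $\p{\Xi\ge m}\approx m^{-1/2}e^{-\Theta(\delta^2 m)}$: the Cram\'er rate function of the centred offspring is \emph{quadratic} near the origin, so the correct rate is $\Theta(1/k^2)$, not $\Theta(1/k)$. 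Equivalently, the expected trimmed size of one subtree is $\Theta(k)$ but its tail extends to scale $k^2$ with a power-law body, so no bound of the form $e^{-cm/k}$ can hold for $m\gg k^2$ once $k$ is large. Your heuristic ``killed as soon as a clash occurs, at rate $\Theta(1/k)$ per branching'' conflates the death of one lineage with the death of the whole subtree. The paper proves exactly the $e^{-cm/k^2}$ bound for the unconditioned subtree (Proposition~\ref{prop:uncond}, via Lemmas~\ref{lem:red} and~\ref{lem:blue} and Cram\'er's theorem), and then the combination with the geometric$(\Theta(1/k))$ number of surviving spine levels still yields $e^{-c\varpi/k^2}$ --- note that your combination rule ``rate $\theta/k$'' silently assumes the summands have mean $\approx 1/\theta$, which is what saves your final arithmetic despite the wrong input. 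So the architecture is sound, but the intermediate lemma as you state it cannot be proved and must be replaced by the $1/k^2$ version together with a combination step that tracks both the tail rate and the mean $\Theta(k)$ of each summand.
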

The rest of the section is devoted to proving this proposition.

A node $u$ may have multiple children, 
and the set of constraints of its children may be inconsistent even if $C(u)=\emptyset$. 
However, to simplify the analysis we will only search for inconsistencies at the children of 
nodes $u$ for which we already have $C(u)\ne \emptyset$. 
In order to bound the size of the trimmed portion of a tree, 
we decompose the tree into a (maximal) subtree which contains only 
nodes with empty sets of constraints (together with the leaves that may be attached to it), 
to which are grafted subtrees whose internal nodes have non-empty constraint sets. 

Consider $T_{\infty}$, a Galton--Watson tree with critical offspring distribution $\xi$ 
conditioned to be infinite. This is the random tree we have 
introduced in Section~\ref{sec:examples}.
Recall that a critical Galton--Watson tree conditioned on being infinite can be 
obtained by size-biasing the progeny distribution the nodes on a single infinite 
path from the root:
Write $(\hat\xi_i)_{i\ge 0}$ for a sequence of 
i.i.d.\ copies of $\hat \xi$, the size-biased version of $\xi$ 
(recall that $\pc{\hat\xi=i}= i \p{\xi=i}$). 
Then $T_{\infty}$ consists of an infinite backbone $(u_i)_{i\ge 0}$ 
such that the node $u_i$ has $\hat\xi_i$ children,
one of which is $u_{i+1}$. All the other offsprings of the nodes $u_i$, $i\ge 0$, are the 
roots of independent (unconditioned) $\GW_\xi$ random trees (see Figure~\ref{fig:GW}).

Let us first consider $T$, an unconditioned Galton--Watson tree of progeny distribution $\xi$, 
and prove that $\|\trim(\hat T)\|$ has exponential tails:
\begin{prop}\label{prop:uncond}
Let $T$ be an unconditioned Galton--Watson tree of progeny distribution $\xi$, 
such that there exists a constant $a>0$ verifying $\Ec{e^{a\xi}}<\infty$, and such that $\mathbb P(\xi=1)=0$.
Then, $\|\trim(\hat T)\|$ has exponential tails.
\end{prop}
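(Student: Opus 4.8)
The plan is to peel $T$ around its root into a constraint‑free core and the subtrees dangling from it, to bound the core and each dangling piece by the total progeny of a \emph{subcritical} Galton--Watson tree with light‑tailed offspring, and to conclude with the classical fact that such a progeny has exponential tails. Write $h$ for the probability generating function of $\xi$; since $\E{\xi}=1$ and $\pc{\xi=1}=0$ we have $p_0:=\pc{\xi=0}>0$ and $\pc{\xi\ge 2}>0$, so $h$ is strictly convex on $[0,1]$ and $h'$ is strictly increasing there. The single mechanism producing decay is: for a \emph{fixed} literal constraint $c$ of the form ``$x_j=b$'', each leaf‑child $w$ of an internal node $u$ of $\hat T$ carries, independently of everything else, a constraint that contradicts $c$ with probability exactly $1/(2k)$ --- given the label of $u$ this happens iff $\mathtt{Lab}(w)$ is one prescribed literal among the $2k$ possible ones. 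The guiding idea is that one should \emph{not} track the full, growing constraint set of a node, which yields a non‑Markovian and a priori supercritical‑looking process, but only compare it against a single fixed constraint; that already trims enough.

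Let $\mathcal B$ be the set of nodes of $T$ with empty constraint set, i.e.\ those having no strict ancestor with a leaf‑child. Then $\mathcal B$ is a Galton--Watson subtree: a node $u\in\mathcal B$ keeps all of its children in $\mathcal B$ iff none of them is a leaf of $T$, which given $\xi_u=m$ has probability $(1-p_0)^{m}$; hence the offspring law of $\mathcal B$ has exponential moments and mean $h'(1-p_0)<h'(1)=1$, so $\mathcal B$ is subcritical. By the generating‑function argument carried out below, this gives exponential tails for the number $L$ of children of $\mathcal B$‑nodes that are leaves of $T$ and for the number $K$ of internal children of $\mathcal B$‑nodes lying outside $\mathcal B$, namely the roots $v_1,\dots,v_K$ of the dangling subtrees $G_1,\dots,G_K$. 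Every leaf of $T$ surviving $\trim$ is either the root (if it is a leaf), or attached to $\mathcal B$, or inside some $G_i$, and the portion of $\trim(\hat T)$ inside $G_i$ equals $\trim$ of $G_i$ with the non‑empty initial constraint set $C_{v_i}$ inherited from above. Passing to a single constraint $c_i\in C_{v_i}$, and then to the weaker trimming $\trim_0(\cdot\,;\{c_i\})$ that only ever checks contradiction with the single constraint $c_i$, can only enlarge the result; and the $G_i$ are conditionally independent Galton--Watson trees with independent labels. Hence
\[
\|\trim(\hat T)\|\ \le\ 1+L+\sum_{i=1}^{K}Y_i ,
\]
where the $Y_i$ may be taken i.i.d.\ and distributed as $Y:=\|\trim_0(\hat G;\{x_1=\True\})\|$ with $G$ an unconditioned Galton--Watson tree. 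Given that $Y$ has exponential tails and that $(L,K)$ is light‑tailed, a routine computation with moment generating functions shows $\|\trim(\hat T)\|$ is light‑tailed, so everything reduces to controlling $Y$.

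Fix $c=(x_1=\True)$ and call a node $v$ of $G$ \emph{active} if it is internal and no strict ancestor of it has a leaf‑child whose induced constraint contradicts $c$; the active nodes are exactly the internal nodes of $\trim_0(\hat G;\{c\})$. Because distinct subtrees of $G$ are independent, the labels are independent, and contradicting $c$ depends only on fresh labels, the active nodes form a genuine Galton--Watson tree: an active node $v$ with $\xi_v=m$ has a number $J\sim\bin(m,p_0)$ of leaf‑children, each bad with probability $1/(2k)$; with probability $(1-1/(2k))^{J}$ none of them is bad, in which case its $m-J$ internal children are active, and otherwise it has no active child. A one‑line computation gives mean offspring $\mu_k=h'\!\big(1-\tfrac{p_0}{2k}\big)<h'(1)=1$ --- again by strict monotonicity of $h'$ --- and the offspring is bounded by $\xi$, hence light‑tailed. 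Finally $\|\trim_0(\hat G;\{c\})\|$ is at most the total number of children of active nodes, namely $\sum_{v\ \mathrm{active}}\xi_v$. Setting $\Phi(\lambda)=\E{e^{\lambda\sum_{v\ \mathrm{active}}\xi_v}}$ and decomposing at the root yields a fixed‑point relation $\Phi=F(\Phi,\lambda)$ with $F(1,0)=1$ and $\partial_\Phi F(1,0)=\mu_k<1$; since $F$ is finite and smooth near $(1,0)$ because $\E{e^{a\xi}}<\infty$, the implicit function theorem produces a finite solution $\Phi(\lambda)$ for small $\lambda>0$, i.e.\ $Y$ has exponential tails. The exponential tails of $L$ and $K$ used above follow from the same scheme, with $p_0$ in place of $p_0/(2k)$.

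The points needing care are the decision to trim only against a single fixed constraint, rather than the whole constraint set, which is what makes the surviving‑node process a bona fide --- and, crucially, subcritical --- Galton--Watson tree; and the verification $\mu_k<1$, which is precisely where $\pc{\xi=1}=0$ (hence $\pc{\xi\ge2}>0$ and strict convexity of $h$) and $\E{\xi}=1$ are used. Note that $\mu_k\uparrow 1$ as $k\to\infty$, so the exponential rate obtained here deteriorates like $1/k$; the additional factor of $k$ in Proposition~\ref{pro:bound-trimmed-GW} will come from the size‑biased spine of $T_\infty$, along which of order $k$ leaf‑children may accumulate before a contradiction, each carrying a dangling subtree of trimmed size of order $k$.
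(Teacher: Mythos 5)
Your proof is correct and follows essentially the same route as the paper's: the split into the empty-constraint core $\mathcal B$ and the dangling constrained subtrees is exactly the paper's blue/red decomposition, and your ``active'' process thinned against a single fixed constraint (bad leaf-child with probability $1/(2k)$, mean offspring $h'(1-p_0/(2k))<1$) is precisely the black/white subcritical branching process of Lemmas~\ref{lem:red} and~\ref{lem:blue}. The only difference is cosmetic: you convert subcriticality plus light tails into exponential tails of the total progeny via a generating-function fixed point and the implicit function theorem, where the paper invokes Cram\'er's theorem (which, incidentally, gives rate $\Theta(1/k^2)$ rather than the $\Theta(1/k)$ you mention in your closing remark).
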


To prove Proposition~\ref{prop:uncond}, let us colour the nodes of $\trim(\hat T)$ as follows: 
the nodes having an empty constraint set in blue, 
and the nodes having a non empty constraint set in red.
It is enough to prove that the random sizes of all these different clusters, which we denote respectively by
$\|\mathtt{blue}\|$ and $\|\mathtt{red}\|$ have exponential tails.
Note that the different red clusters are i.i.d. and therefore, their sizes all have the same law: 
$\|\mathtt{red}\|$ is the size of one of them and not the size of their union.
Recall that the random labelling of~$T$ is defined for a certain integer~$k$,
and that $\|\trim(\hat T)\|$, $\|\mathtt{red}\|$ and $\|\mathtt{blue}\|$ thus depend on~$k$.

The following two lemmas concern each of the two coloured random trees.

%

\begin{lem}\label{lem:red}
Let $\xi$ be an integer-valued random variable such that $\Ec \xi=1$ and $\pc{\xi=1}=0$. Suppose that 
there exists a constant $a>0$ such that $\Ec{e^{a\xi}}<\infty$. 
Then, there exists a constant $c>0$ such that, for all integers~$k$ and~$m$,
\[
\p{\|\mathtt{red}\|\ge m} \le \exp(-c m/k^2)
\]
\end{lem}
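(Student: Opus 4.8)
The plan is to analyse a single red cluster. By the branching structure of $T$ such a cluster admits the following description: its topmost vertex $\rho$ is a child of a vertex $p(\rho)$ of the blue cluster; the subtree $T_\rho$ of $T$ rooted at $\rho$ is an unconditioned $\GW_\xi$ tree; and $C_\rho\neq\emptyset$ consists exactly of the constraints imposed by the leaf-children of $p(\rho)$. Fix once and for all one such constraint, which has the form $x_1=b_1$ with $b_1\in\{\True,\False\}$ (the choice of index is immaterial by the symmetry of the labelling). Since constraint sets are non-decreasing along root-to-leaf paths, $x_1=b_1\in C_u$ for every vertex $u$ of the cluster, and the cluster is precisely the set of consistent descendants of $\rho$. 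If $\rho$ is a leaf of $T$ the cluster has size $1$ and there is nothing to prove, so assume from now on that $\rho$ is internal.

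First I would pass to the tree $T^{\mathrm{int}}$ of internal vertices, obtained from $T$ by deleting all its leaves. Using $\E\xi=1$, $\pc{\xi=1}=0$ and the exponential moment assumption, one checks that $T^{\mathrm{int}}$ is again a Galton--Watson tree, critical and with exponential moments: its offspring law is that of $\bin(\xi',q)$ with $\xi'\eqdist(\xi\mid\xi\ge 2)$ and $q:=\pc{\xi\ge 2}$, so that $\E\psi=q\,\E{\xi'}=1$. Moreover each vertex $u$ of $T^{\mathrm{int}}$ carries, jointly with its $T^{\mathrm{int}}$-offspring number $\psi_u$, the number $L_u$ of its children in $T$ that are leaves of $T$, and the pairs $(\psi_u,L_u)$ are i.i.d.\ across vertices. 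The internal vertices $R^{\mathrm{int}}$ of the red cluster form the subtree of $T^{\mathrm{int}}$ obtained by deleting all descendants of any vertex $u$ at which the constraint set becomes inconsistent; because $x_1=b_1\in C_u$, this occurs as soon as one of the $L_u$ leaf-children of $u$ carries the literal conflicting with $x_1=b_1$ given $\diamond_u$ — an event of conditional probability $1-(1-\tfrac1{2k})^{L_u}$ given $(\psi_u,L_u)$. Hence $R^{\mathrm{int}}$ is stochastically dominated by the Galton--Watson tree $\tilde R$ obtained from $T^{\mathrm{int}}$ by deleting, independently at each vertex $u$, all descendants of $u$ with probability $1-(1-\tfrac1{2k})^{L_u}$; equivalently $\tilde R$ has offspring distribution $\psi B$ with $\pc{B=1\mid\psi,L}=(1-\tfrac1{2k})^{L}$, of mean
\[
\E{\psi\,(1-\tfrac1{2k})^{L}} \;=\; 1-\E{\psi\bigl(1-(1-\tfrac1{2k})^{L}\bigr)} \;\le\; 1-\tfrac1{2k}\,\E{\psi\,\I{L\ge1}} \;=\; 1-\tfrac{c_1}{2k},
\]
where $c_1:=\E{\psi\,\I{L\ge1}}$ is a positive constant depending only on $\xi$ (positivity uses $0<\pc{\xi=0}<1$). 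Since every literal-labelled leaf of $\trim(\hat T)$ lying in the cluster is a leaf-child of some vertex of $R^{\mathrm{int}}$, this gives $\|\mathtt{red}\|\le 1+\sum_{u\in R^{\mathrm{int}}}L_u\le 1+\sum_{u\in\tilde R}L_u=:1+W$.

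It then suffices to bound the upper tail of $W$, the total leaf-weight of the subcritical weighted Galton--Watson tree $\tilde R$. I would explore $\tilde R$ in depth-first order, revealing at step $t$ the pair $(\psi_{u_t},L_{u_t})$ and the pruning variable of the visited vertex $u_t$; writing $X_t$ for the remaining stack size, $Y_t=L_{u_1}+\dots+L_{u_t}$, and $N=|\tilde R|$ for the (a.s.\ finite) exploration length, one has $X_0=1$, $X_N=0$ and $Y_N=W$. Because $\E{\psi B}=1-\Theta(1/k)$ and $\psi,L$ have exponential moments, a second-order expansion of $(\lambda,\theta)\mapsto\E{e^{\lambda\psi B+\theta L}}$ near the origin lets one choose $\lambda=\Theta(1/k)$, $\theta=\Theta(1/k^2)$ and $\eta=\Theta(1/k^2)>0$ for which $e^{\lambda X_{t\wedge N}+\theta Y_{t\wedge N}+\eta(t\wedge N)}$ is a non-negative supermartingale; letting $t\to\infty$ and dropping the factor $e^{\eta N}\ge 1$ yields $\E{e^{\theta W}}\le e^{\lambda}$, hence by Markov $\pc{W\ge m}\le e^{\lambda}e^{-\theta m}\le C\exp(-cm/k^2)$ for constants $C\ge 1$, $c>0$ depending only on $\xi$. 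Combining with the crude bound $\pc{\|\mathtt{red}\|\ge m}\le\pc{\|\mathtt{red}\|\ge 1}<1$ — valid for all $m\ge 1$ since $\pc{\|\mathtt{red}\|=0}>0$ — and shrinking $c$ accordingly removes the multiplicative constant and gives $\pc{\|\mathtt{red}\|\ge m}\le\exp(-cm/k^2)$ for all integers $m,k\ge 1$.

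I expect the third step to be the main obstacle: one needs a large-deviation upper bound of the correct order, $\exp(-cm/k^2)$, for the total weight of a weighted \emph{subcritical} Galton--Watson tree whose offspring numbers and weights are both unbounded, and the appearance of $k^{-2}$ — rather than $k^{-1}$ — in the exponent is exactly what forces one to keep track of the subcriticality gap $\Theta(1/k)$ quantitatively; a purely qualitative ``subcritical, hence exponential tails'' argument would leave the rate as an uncontrolled function of $k$. The accompanying quantitative input, namely that this gap is genuinely of order $1/k$ (equivalently $\E{\psi\,\I{L\ge1}}>0$), is elementary but essential.
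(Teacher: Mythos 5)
Your proof is correct in substance and follows essentially the same route as the paper's: the paper also reduces the red cluster to a branching process that is subcritical with mean offspring $1-\Theta(1/k)$ (its black/white two-type process, pruned with probability $\tfrac1{2k}\I{W>0}$, plays the role of your $\tilde R$), and it extracts the $e^{-cm/k^2}$ tail from the quadratic behaviour of the rate function near the origin via Cram\'er's theorem applied to the exploration walk — your joint exponential supermartingale in $(X_t,Y_t)$ is just a repackaging of that, and your passage to the internal tree $T^{\mathrm{int}}$ with i.i.d.\ pairs $(\psi_u,L_u)$ is a clean equivalent of the paper's colouring. The one step I would not let stand as written is the final absorption of the multiplicative constant: $\pc{\|\mathtt{red}\|=0}$ is \emph{not} a $k$-independent positive constant — the cluster can only have zero literal-leaves if some leaf-child of its root already carries the conflicting literal, an event of probability $\Theta(1/k)$ — so $\pc{\|\mathtt{red}\|\ge 1}=1-\Theta(1/k)\to 1$ and cannot serve as a uniform bound $p_1<1$ in the standard constant-removal trick (used naively it would degrade the rate to $e^{-\Theta(m/k^3)}$). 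The fix is easy: for $m\ge 2$ use $\pc{\|\mathtt{red}\|\le 1}\ge \pc{\xi=0}>0$, since with that probability the cluster root is itself a leaf of $T$ and the size is exactly one; for $m=1$ use the quantitative bound $\pc{\|\mathtt{red}\|=0}\ge c''/k\ge c''/k^2$, which is exactly what $e^{-cm/k^2}\ge 1-cm/k^2$ requires. (The paper is equally cavalier about this small-$m$ regime, so this is a bookkeeping repair rather than a flaw in the strategy, but the justification you gave for that sentence is wrong as stated.)
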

\begin{proof}
Note that a red cluster is a Galton--Watson tree of progeny $\xi$, whose root has a non-empty constraint set.
Let us introduce an alternative way of sampling the $k$-trimmed 
version. We do this by introducing two types of nodes, say black and white. 
The root of the tree is black, meaning that its constraint set is non-empty: 
assume without loss of generality that this constraint set contains $\{x_1 = \True\}$.
In the following the black nodes will be the internal nodes and the white nodes will be the leaves.
Note that every black node has a constraint set that contains $\{x_1 = \True\}$.
The white nodes are leaves and have no children, 
they give a chance to their siblings to become inconsistent:
a leaf labelled by $x_1$ (resp. $\bar x_1$) and whose father is labelled by $\lor$ (resp. $\land$)
makes its siblings inconsistent. 
\begin{figure}[h]
\begin{center}
\includegraphics[width=8cm]{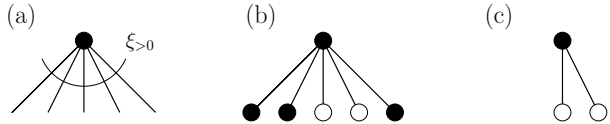}
\caption{Reproduction of black nodes: 
(a) first sample the number of children with distribution the law of $\xi_{>0}$,
(b) colour the children independently at random in white with probability $p_0$ or black otherwise,
(c) if at least one of the children is white then, with probability $1/(2k)$, delete all the black children.}
\label{fig:lem46}
\end{center}
\end{figure}
The black nodes reproduce as follows (see Figure~\ref{fig:lem46}): 
first sample $\xi_{>0}$, a copy of $\xi$ conditioned on $\xi\ge 1$. Then 
colour each node white with probability $p_0=\p{\xi=0}$, or black with probability $1-p_0$. 
Now comes the trimming part: if there is at least one white child, the black siblings 
are all removed with probability $\nicefrac1{2k}$.
One easily verifies that the tree obtained by this
branching process is stochastically larger than the tree obtained by applying 
the $k$-trimming procedure described above (in the $k$-trimming procedure,
the constraint-sets of the black nodes possibly contain more than one constraint, 
and each white node gives a chance to trim all its siblings including itself, and 
not only its black siblings). 
The matrix $M$ of mean offspring of this branching process is 
$$
M=\begin{bmatrix}
0 & 0 \\
\Ec W & \Ec B
\end{bmatrix},
$$
where $W$ and $B$ denote respectively the number of white and black children of a black 
node. In particular, the largest eigenvalue $M$ is $\mathbb E[B]$, and it is this value that characterises 
the asymptotic behaviour of the branching process.

We have
\begin{equation}\label{eq:def_dist-gw}
W\sim\bin(\xi_{>0}, p_0) 
\qquad\text{and}\qquad
B=\frac{\I{W>0}}{2k}\,\delta_0 + \left(1-\frac{\I{W>0}}{2k}\right)(\xi_{>0}-W),
\end{equation}
where $\bin(n,p)$ stands for the binomial distribution of parameters $n\in \mathbb N$ and $p\in[0,1]$,
and $\delta_0$ is the Dirac mass at~$1$.
Let $\bar W = \xi_{>0} - W$. Note that this new random variable has law $\bin(\xi_{>0}, 1-p_0)$, and that 
\[B=\frac{\I{\bar W < \xi_{>0}}}{2k} \delta_0 + \left(1-\frac{\I{\bar W < \xi_{>0}}}{2k}\right) \bar W.\]
Thus,
\begin{align*}
\Ec{B} 
& = \mathbb E \bar W - \frac1{2k} \mathbb E[\bar W \I{\bar W < \xi_{>0}}]\\
&= (1-p_0)\mathbb E \xi_{>0} - \frac1{2k} \left(\mathbb E \bar W - \mathbb E [\bar W \I{\bar W=\xi_{>0}}]\right)\\
&= \frac{1-p_0}{1-p_0} - \frac1{2k}\left((1-p_0)\mathbb E\xi_{>0} - \mathbb E[\xi_{>0}(1-p_0)^{\xi_{>0}}]\right)\\
&= 1- \frac1{2k} \left(1 - \mathbb E[\xi_{>0}(1-p_0)^{\xi_{>0}}]\right),
\end{align*}
because $\mathbb E \xi_{>0} = \frac1{1-p_0}$ since $\mathbb E \xi = 1$.
Note that, with $p_i=\pc{\xi=i}$, we have
\[\mathbb E[\xi_{>0}(1-p_0)^{\xi_{>0}}]
= \sum_{i\geq 1} ip_i (1-p_0)^{i-1} < 1,\]
since $p_1= \mathbb P(\xi=1)=0$ by assumption.
We thus have that there exists a positive constant~$c$ such that,
\[\mathbb E B = 1 - \frac{c}{k}.\]
It follows that the black subtree rooted at a black node is a subcritical Galton--Watson
tree. Let $\Xi$ denote the total progeny of the black subtree, and $(B_i)_{i\ge 1}$ be 
i.i.d.\ copies of $B$. Then,
\[
\p{\Xi \ge n } 
 \le \p{\sum_{i=1}^n (B_i-1) \ge 0}
 \le \exp(-n \Psi^\star(1-\Ec{B})),
\]
by Cram\'er's theorem \cite{DeZe1998}, where $\Psi^\star$ is the large deviations rate function 
of $B-\Ec B$. More precisely, write $\Psi(t):=\log \Ec{e^{t (B-\Ec B)}}$, 
and observe that $\Psi(t)<+\infty$ since $\xi$ has exponential moments, 
and $\Psi(\varpi)\sim \Psi''(0)  \varpi^2/2$ as $\varpi\to 0$. 
Then we have, setting $\alpha = 1/\Psi''(0)$, as $\varpi\to 0$, 
\begin{align*}
\Psi^\star(\varpi) = \sup_{t\in \R} \{t \varpi - \Psi(t)\} \ge \alpha \varpi^2 - \Psi(\alpha \varpi) 
\sim x \varpi^2 \alpha/2~.
\end{align*}
This guarantees that there exists a constant $c>0$ such that 
\begin{equation}\label{eq:tail_subcrit-GW}
\p{\Xi \ge n } \le e^{-c n/k^2}.
\end{equation}
In order to recover the size of the whole tree, it suffices to add the missing white
nodes. Writing $(W_i)_{i\ge 1}$ for a sequence of i.i.d\ copies of $W$ defined in \eqref{eq:def_dist-gw}, 
it follows from \eqref{eq:tail_subcrit-GW} that 
\[
\p{\|\mathtt{red}\| \ge m }
\le \p{\sum_{i\ge 1} W_i \I{i\le \Xi}\ge m} 
\le \p{\sum_{1\le i\le 2 m/\Ec{W} } W_i \ge m} + e^{-c_2  m/(k^2 \Ec B) }
\le e^{-c_1 m} + e^{-c_2 m/k^2},
\]
by a second use of Cram\'er's theorem for large deviations. 
Indeed, since $\xi$ has exponential tails, $W$ also does and the theorem applies.   
\end{proof}

\begin{lem}\label{lem:blue}
Let $\xi$ be an integer-valued random variable such that $\Ec \xi=1$, and $\pc{\xi=1}=0$. Suppose that 
there exists a constant $a>0$ such that $\Ec{e^{a\xi}}<\infty$. 
Then, there exists a constant $c>0$ such that, for all integers $k$ and $m$,
\[
\p{\|\mathtt{blue}\|\ge m} \le \exp(-c m/k^2).
\]
\end{lem}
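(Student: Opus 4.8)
The plan hinges on an observation that has no analogue for the red clusters: the blue cluster depends only on the unlabelled tree $T$, not on the labelling nor on $k$. Indeed, unravelling the recursive definition of the constraint sets, a node has an empty constraint set --- i.e.\ is blue --- if and only if none of its strict ancestors has a leaf among its children. Hence the blue internal nodes form a subtree of $T$, which I claim is a \emph{subcritical} Galton--Watson tree; this will give the stronger bound $\p{\|\mathtt{blue}\|\ge m}\le e^{-cm}$ with $c$ independent of $k$, from which the stated bound follows a fortiori since $k\ge1$.

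First I would identify the law of the blue cluster by exploring $T$ from the root. If a blue internal node $u$ has $\xi_u\ (\ge2)$ children in $T$, then all of them are internal with probability $(1-p_0)^{\xi_u}$, where $p_0=\pc{\xi=0}$, because the subtrees hanging from the children are i.i.d.\ $\GW_\xi$ trees, each reduced to a single leaf with probability $p_0$; in that case all $\xi_u$ children are again blue internal, while otherwise none of them is, and the leaf-children of $u$ are precisely the literal-labelled leaves of the blue cluster lying below $u$. Therefore the blue internal tree is a Galton--Watson tree with offspring law $\zeta$ determined by $\pc{\zeta=j}=\pc{\xi=j\mid\xi\ge2}\,(1-p_0)^j$ for $j\ge2$ and all remaining mass at $0$ --- save on the event $\{\xi=0\}$, of probability $p_0$, where the root of $T$ is itself a leaf, $\|\mathtt{blue}\|=1$, and there is nothing to prove. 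Since $\pc{\xi=1}=0$ forces $1=\Ec\xi=\sum_{j\ge2}j\,\pc{\xi=j}\ge2\,\pc{\xi\ge2}$, we get $p_0\ge\tfrac12$, whence a one-line computation gives $\Ec\zeta\le(1-p_0)\,\Ec\xi\le\tfrac12<1$; moreover $\zeta$ inherits exponential moments from $\xi$.

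Next I would bound the total number $N$ of blue internal nodes: being the total progeny of a subcritical Galton--Watson process with exponential moments, $N$ has exponential tails, $\p{N\ge n}\le\p{\sum_{i=1}^n(\zeta_i-1)\ge0}\le e^{-c_0n}$, by exactly the Cram\'er-type argument already used for the black subtree in the proof of Lemma~\ref{lem:red}, using $\Ec\zeta<1$. Then, since every literal leaf of the blue cluster is a $T$-child of a blue internal node,
\[
\|\mathtt{blue}\|\ \le\ 1+\sum_{u\text{ blue internal}}\xi_u,
\]
where the degrees $\xi_u$, listed in breadth-first order, form an i.i.d.\ sequence with law $\xi\mid\xi\ge2$ (finite mean $\mu=(1-p_0)^{-1}$, exponential moments) for which $N$ is a stopping time. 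A standard two-term split then closes the argument: with $n_m=\lceil m/(2\mu)\rceil$,
\[
\p{\|\mathtt{blue}\|\ge m}\ \le\ \p{N\ge n_m}+\p{\textstyle\sum_{i=1}^{n_m}\xi_i\ge m}\ \le\ e^{-c_1m},
\]
the last inequality by Cram\'er once more; and $e^{-c_1m}\le e^{-c_1m/k^2}$ since $k\ge1$.

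The only slightly delicate step is the first one: checking that the blue internal tree is \emph{exactly} a Galton--Watson tree with offspring law $\zeta$ (rather than merely being stochastically dominated by one), and that $N$ is genuinely a stopping time for the breadth-first exploration filtration. This is routine, and with it in place the remainder is a direct transcription of classical subcritical branching-process tail estimates, so I expect no real obstacle.
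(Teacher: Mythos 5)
Your proof is correct, but it takes a genuinely different route from the paper's, and in fact yields a stronger conclusion. The paper handles the blue cluster by stochastic domination: it observes that the subtree of empty-constraint-set nodes is dominated by the ``black'' branching process already built in the proof of Lemma~\ref{lem:red}, whose mean offspring is $1-c/k$, and therefore inherits the $\exp(-cm/k^2)$ tail; the leaves and the roots of the red clusters are then added on top. You instead exploit the exact structure: since a node has an empty constraint set if and only if no strict ancestor has a leaf-child, the blue cluster is a deterministic function of the \emph{unlabelled} tree, and the blue internal nodes form precisely a Galton--Watson tree with offspring law $\pc{\zeta=j}=\pc{\xi=j\mid\xi\ge 2}(1-p_0)^j$ for $j\ge2$ (your identification of this as an exact GW law, via the product structure of the event ``all children internal'', is sound, as is the stopping-time justification for the final Cram\'er split). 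Combined with the observation that $\pc{\xi=1}=0$ and $\Ec\xi=1$ force $p_0\ge\nicefrac12$, this gives $\Ec\zeta\le 1-p_0\le\nicefrac12$, a subcriticality gap that is uniform in $k$, whence the tail bound $\exp(-cm)$ with $c$ independent of $k$ --- strictly stronger than the stated $\exp(-cm/k^2)$, which the paper's domination argument cannot deliver since it passes through a process whose subcriticality gap is only $\Theta(1/k)$. The paper's route buys brevity (it reuses Lemma~\ref{lem:red} verbatim and never needs to identify the law of the blue cluster); yours buys precision and makes transparent the fact that, unlike the red clusters, the blue cluster does not see the labelling at all. The only caveats are cosmetic: the constant in the displayed bound must absorb the trivial small-$m$ cases, exactly as in the paper's own statement, and your bound $\|\mathtt{blue}\|\le 1+\sum_{u}\xi_u$ over blue internal $u$ dominates the cluster size under any of the (not fully pinned down) conventions the paper might intend, since every blue node, white leaf, and red-cluster root other than the root of $T$ is a child of a blue internal node.
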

\begin{proof}
The proof is very similar to that of Lemma~\ref{lem:red}.
We now have black, white, and in addition green nodes which are the ones having a white sibling. 
Black nodes are the ones having an empty constraint set, the white ones are the leaves 
and the green ones are nodes having a non-empty constraint set 
(being then the roots of independent copies of a red cluster).
Clearly, the black subtree is dominated by the black subtree considered in the proof of Lemma~\ref{lem:red}, 
so its size also has exponential tails. 
Then the nodes to add are only the white and green nodes, and the distribution is precisely that 
of $\xi_{>0}$ conditioned on $W>0$. This also has exponential tails since 
\[\p{\xi_{>0}\ge \varpi \,|\, W>0} \le p_0^{-1} \p{\xi_{>0} \ge 0}.\]
We omit the straightforward details.
\end{proof}

\begin{wrapfigure}{r}{6cm}
\includegraphics[width=4.5cm]{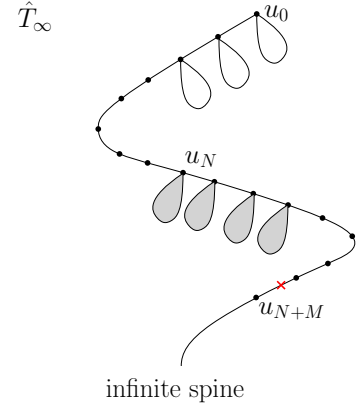}
\caption{The first $N$ nodes of the spine have empty constraint sets, the following $M$ nodes have non-empty constraint sets, 
and all nodes below that are non-consistent, and thus get cut during the trimming procedure. The white subtrees fall under Proposition~\ref{prop:uncond}, the gray ones under Lemma~\ref{lem:red}.}
\label{fig:prop44}
\end{wrapfigure}
Lemmas~\ref{lem:red} and~\ref{lem:blue} tell us that each red subtree has exponential tails, and that their number (equal to $\|\mathtt{blue}\|$) also has exponential tails, which implies Proposition~\ref{prop:uncond}.

\begin{proof}[Proof of Proposition~\ref{pro:bound-trimmed-GW}]
Let $T_{\infty}$ be a Galton--Watson tree of progeny $\xi$ conditioned on being infinite.
It can be described as an infinite spine on which independent (unconditioned) 
Galton--Watson of progeny distribution $\xi$ are hanging.
Let us associate to every node of $\hat T_{\infty}$ its constraint set as explained in the trimming procedure, 
and let us first focus on the nodes of the spine $(u_i)_{i\geq 0}$ (see Figure~\ref{fig:prop44}). 
The first nodes of this sequence have empty constraint sets. 
Therefore, the trees hanging onto them fall under Proposition~\ref{prop:uncond}.
The following nodes on the spine have non-empty constraint sets and therefore,
the unconditioned Galton--Watson trees hanging on them have the same law as the red clusters studied in Lemma~\ref{lem:red}.
In all cases, the trimmed versions of the subtrees hanging on the spine have exponential tails.

It thus only remains to prove that the total number of trees hanging on the trimmed spine has exponential tails.
Let us denote by $\hat W_i$ the random number of leaves of node $u_i$. 
The sequence $(\hat W_i)_{i\geq 0}$ is i.i.d. 
and we denote by $\hat W$ a random variable having the common law of the $\hat W_i$'s.
Recall that we have two different kinds of nodes on the spines: 
nodes with empty constraints sets (at the top of the tree), 
and nodes with non-empty constraints sets.
Let us denote by $N$ the number of nodes on the spine with empty constraint sets, 
and by $N+M$ the total number of nodes on the trimmed spine.
Let us prove that the total number of trees hanging on nodes of the spine with non-empty constraint sets has exponential tails.
The proof that the total number of trees hanging on nodes of the spine having \emph{empty} constraints set has exponential tails 
follows the same outline and is actually simpler: this case will be left to the reader.

So let us treat the case of the $M$ nodes of the trimmed spine having non-empty constraint sets:
let $\nu$ be such a node.
Assume without loss of generality that its constraint set is $\{x_1 = \True\}$.
Since the spine has not been cut before level $N+M$, the node $\nu$ cannot be inconsistent: 
thus the leaf-children of $\nu$ cannot be labelled by $x_1$ 
(resp. $\bar x_1$ depending on the connector labelling $\nu$).
We denote by $\hat S$ the indicator of the event ``no leaf-child of $\nu$ is labelled by $x_1$'', 
and by $\hat W$ the number of leaf-children of $\nu$. We have:
\[
\hat W=\bin(\hat\xi - 1, p_0), \qquad \text{ and }\qquad
\hat S=\ber\big((1-\nicefrac1{2k})^{\hat W}\big).
\]
Therefore, if we denote by $m:=\min\{n\geq 1\ |\ p_n>0\}$, where $p_n:=\mathbb P(\xi=n)$, then
\[\pc{\hat S=1} = \mathbb E[(1-\nicefrac1{2k})^{\hat W}]
\geq \mathbb P(\hat W=0) \geq \mathbb P(\hat \xi=m) (1-p_0)^{m-1} =: \kappa > 0,\] 
and it follows that, $i\ge 0$ and $k\ge 1$,
\[
\pc{\hat \xi-1\ge i\,|\, \hat S=1} 
\le \frac{\pc{\hat \xi-1\ge i}}{\pc{\hat S=1}}
\le \kappa^{-1} \pc{\hat \xi\ge i+1}.
\]
In particular, if $\hat \xi$ has exponential moments, so does the number of siblings 
of a node $u_i$ of the spine conditional on not being cut. 
Let $(X_i)_{i\ge 1}$ be an i.i.d.\ sequence of copies of $\hat \xi-1$ conditional on $\hat S=1$,
and $M$ for a geometric random variable with success parameter $\eta:=\pc{\hat S=0}<1$ 
(since $\pc{\hat W>0}>0$). 
Then, writing $\Delta$ for the collection of trees hanging on node of the trimmed spine 
having a non-empty constraint set, we have, for all integer $m$:
\begin{align}\label{eq:number_roots}
\p{|\Delta| \ge m} 
& \le \p{\sum_{i\ge 1} X_i \I{i \le M}\ge m} \notag \\
& \le \p{\sum_{i=1}^m X_i \ge n} + \p{M\le m}\notag \\
& \le \p{\sum_{i=1}^{\lceil \epsilon m \rceil } X_i \ge n} + (1-\eta)^{\epsilon m} \notag \\
& \le e^{-c m},
\end{align}
for $\epsilon>0$ small enough and $c$ a constant depending on $\epsilon$, which concludes the proof.
\end{proof}

\section{Improved relations between complexity and probability}
\label{sec:bounds}

Section~\ref{sec:convergence} was devoted to proving that, when the sequence of random trees $(T_n)_{n\geq 1}$
converges locally in distribution to an infinite tree with finitely many ends $T_{\infty}$,
then the distribution of the random Boolean function calculated by the random $k$-variable labelling of $T_n$ 
converges to an asymptotic distribution $\mathbb P_k = P_k[T_{\infty}]$ when $n\to+\infty$ (cf.~Theorem~\ref{thm:cv}).

We here state and prove an equivalent of the result of~\citet{Kozik2008a} (see Equation~\eqref{eq:kozik-theta}): 
fix an integer $k_0$, and a $k_0$-variable Boolean function~$f$,
we are able to understand the behaviour of $\mathbb P_k(f)$ when $k$ tends to infinity.
Remark though that we need stronger assumptions than the one needed to get convergence to the 
asymptotic distribution: ({\sc a}) We restrict ourselves to random trees whose local limit has a 
unique end, although we believe that the result holds for local limits having finitely many ends.
More importantly, ({\sc b}) we need assumptions that permit to control the sizes of the finite 
trees attached to the infinite spine.

\subsection{Controlling the repetitions}\label{sec:repetitions}
First of all, let us prove the following crucial lemma concerning the probability that the $k$-trimmed version 
(according to~$\trim$) of a randomly labelled tree contains repetitions.
The number of {\bf repetitions} in a labelled and/or tree is defined as 
the difference between the number of its leaves and the number of distinct variables that appear as leaf-labels of this tree. 
As an example, the left tree in Figure~\ref{fig:example_Btree} has~5 
repetitions since it has size~8 and is labelled by~3 distinct variables, namely~$x_1, x_2$ and~$x_3$.

\begin{lem}\label{lem:trim}
\begin{enumerate}[(a)]
\item There exists an integer $k_0$ such that, for any $k\geq k_0$, 
for any tree $t_{\infty}$, for any integer~$q$,
\[\mathbb P(\trim(\hat t_{\infty}) \text{ contains at least }q\text{ repetitions})
\leq \frac{\Ec{\|\trim(\hat t_{\infty})\|^{2q}}+2\mathtt e\ \Ec{\|\trim(\hat t_{\infty})\|^q}}{k^q}.\]
\item For any infinite tree $t_{\infty}$, and for all integers $p$ and $q$, there exists 
a constant $K_{p,q}(t_{\infty})$ such that
\[\displaystyle \mathbb P(\trim(\hat t_{\infty}) \text{ has size $p$ and contains at least }q\text{ repetitions})
\leq \frac{K_{p,q}(t_{\infty})}{k^{q+1}}.\]
\end{enumerate}
\end{lem}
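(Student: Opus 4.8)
The plan is to treat part (a) first by a union bound over which variables repeat, and then to deduce part (b) by combining (a) with a tail bound on the size of the trimmed tree. For part (a), I would condition on the \emph{shape} of $\trim(\hat t_\infty)$, i.e.\ on the unlabelled tree together with the information of which nodes are leaves; in fact it is cleaner to condition on $\|\trim(\hat t_\infty)\|=s$ and on the positions of the $s$ literal-leaves. Given this, the labels of the literal-leaves are \emph{not} quite i.i.d.\ uniform on $\{x_1,\bar x_1,\dots,x_k,\bar x_k\}$ because the trimming procedure biases them (a leaf-label can cause a sibling to become inconsistent and be removed), so the first thing to check is how to handle this dependence. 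The trick I would use: run the labelling in the ``black/white'' exploration of Section~\ref{sub:alt_proof}, so that conditionally on the trimmed shape, each surviving literal-leaf still has a label that is uniform on the $2k$ literals \emph{up to the one forbidden constraint-variable} — more precisely, conditionally on survival, a leaf hanging under a node with constraint set $C$ is uniform on the literals not contradicting $C$, which still has at least $2k-2\ge k$ admissible literals for $k$ large, and different leaves are conditionally independent given the shape. This uniform-on-a-large-set-and-conditionally-independent property is exactly what a union bound needs.

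With that in hand, part (a) is a second-moment/union-bound computation. Having at least $q$ repetitions among $s:=\|\trim(\hat t_\infty)\|$ leaves means there is a set of $q+1\le$ (number of leaves) leaves... actually more simply: at least $q$ repetitions is implied by the existence of $q$ ``colliding pairs'' in an appropriate sense, and the cleanest bound is
\[
\mathbb P(\ge q\text{ repetitions}\mid \text{shape},\, \|\trim\|=s)
\le \binom{s}{2q}\,\frac{(2q)!}{q!}\,\Big(\frac1k\Big)^q
\le \frac{s^{2q}}{k^q},
\]
obtained by choosing an ordered list of $q$ leaves that are each ``second occurrences'' of one of $q$ other chosen leaves; each such coincidence has probability at most $1/k$ given the conditional uniformity, and there are at most $s^{2q}$ ways to pick the configuration. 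One also needs the slightly weaker but separately useful bound where repetitions come in blocks, which is where the additive term $2\mathtt e\,\mathbb E\|\trim\|^q/k^q$ comes from: a cruder count that pays only $q$ factors of $s$ (choosing $q$ leaves and demanding each collides with \emph{some} earlier leaf) gives $\binom{s}{q}(s/k)^q/q!\le \mathtt e\, s^q (s/k)^q$, and symmetrising over the two roles yields the stated form after taking expectations in $s$ via $\mathbb E[s^{2q}]$ and $\mathbb E[s^q]$. I would then take the expectation over the shape, i.e.\ over $s=\|\trim(\hat t_\infty)\|$, to get the bound in the statement. The constant $k_0$ enters only through the requirement that the number of admissible literals per leaf be at least $k$ (so $k_0=2$ or $3$ suffices).

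For part (b), I would simply intersect the event in (a) with $\{\|\trim(\hat t_\infty)\|=p\}$. On that event the conditional probability of having $\ge q$ repetitions is at most $p^{2q}/k^q$ by the computation above — but that is only $k^{-q}$, not $k^{-q-1}$, so one extra factor of $1/k$ must be harvested. The point is that a trimmed tree of \emph{fixed finite} size $p$ having $\ge q$ repetitions forces a very rigid combinatorial pattern, and the gain of the extra $1/k$ comes from the fact that, because $t_\infty$ is infinite with an end, there is a positive-probability mechanism (the spine keeps growing and producing fresh potential leaves) so that fixing $\|\trim\|=p$ \emph{small} is itself an event whose probability does not dominate; more concretely, conditioning on size exactly $p$ and on $\ge q$ repetitions, one of the $p$ leaves is a repeat, contributing $1/k$, and the remaining structure contributes at most a bounded factor $K_{p,q}(t_\infty)$ depending on $p,q$ and the law of $t_\infty$ but not on $k$. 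So the clean way to phrase it: the number of labelled trees of size $p$ with $\ge q$ repetitions on a fixed shape is $O(k^{p-q-1})$ (we lose $q+1$ literals' worth of freedom, one for pinning down a representative and $q$ for the forced coincidences), while the total weight of size-$p$ labelled trees is $\Theta(k^{p})$ times the shape probability; dividing gives $k^{-q-1}$, with the shape-sum absorbed into $K_{p,q}(t_\infty)$, finite because $\|\trim(\hat t_\infty)\|=p$ involves only finitely many shapes up to the trivial spine-length and each has summable probability.

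The main obstacle I anticipate is \textbf{not} the combinatorics but justifying the conditional-uniformity/independence of the surviving leaf-labels given the trimmed shape: the trimming is a function of the labels, so conditioning on its outcome is a size-biasing that could in principle create correlations between distant leaves. I would handle this either by the black/white exploration argument sketched above (which decouples the survival decision at each internal node from the \emph{identity} of the surviving leaves' variables, beyond the single constraint variable carried down each branch), or, if that is delicate, by the cruder route of not conditioning on the exact trimmed shape but only on $\|\trim\|\le s_0$ for a high-probability threshold $s_0$, using Proposition~\ref{pro:bound-trimmed-GW}-type tail bounds, and then union-bounding over \emph{all} leaves of the original tree — at the cost of a worse constant but the same powers of $k$. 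Everything downstream (the two displayed inequalities) then follows from elementary counting.
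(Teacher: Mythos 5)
Your skeleton matches the paper's: condition on the realised shape of $\trim(\hat t_\infty)$, compare labellings with at least $q$ repetitions to all labellings, and in part (b) harvest the extra $1/k$ from the fact that an infinite $t_\infty$ forces at least one cut. But the step you yourself flag as the obstacle --- what the surviving leaf labels look like conditionally on the trimmed shape --- is exactly where the proposal breaks, and neither of your escape routes repairs it. The conditional independence/uniformity claim is false: take a node labelled $\land$ with two leaf children; both survive precisely when their labels are not a literal and its negation, so conditionally on the shape the two labels are dependent (and the constraint set of one leaf involves the label of the other, so ``uniform on the literals not contradicting $C$'' is circular). The fallback of union-bounding over all leaves of the original tree is unavailable because $t_\infty$ is infinite. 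The paper's substitute is a comparison identity rather than independence: conditioning on the connective labels, the probability that trimming stops exactly at a given shape $t$, given a labelling $\ell$ of the leaves of $t$, equals $\prod_{v\in\partial t}p(D(v),F(v))$ when $\ell$ is repetition-free (here $D(v)$ counts the distinct constraint literals accumulated above a boundary node $v$, $F(v)$ its leaf-children in $t_\infty$, and $p(d,f)$ is the chance that $f$ uniform literals trigger one of $d$ constraints or a contradiction), and is \emph{at most} that product when $\ell$ has repetitions, since repetitions can only decrease $D(v)$ and $p(d,f)$ is nondecreasing in $d$. Everything then reduces to the deterministic count $|\Delta_m^{[\ge q]}|/|\Delta_m^{[0]}|\le 2\mathtt e\, m^q/k^q$, proved via Stirling numbers of the second kind; that bound requires $m\le\sqrt k$, and the regime $\|\trim(\hat t_\infty)\|>\sqrt k$ is killed by Markov's inequality --- which is the true origin of the $\Ec{\|\trim(\hat t_\infty)\|^{2q}}/k^q$ term, not a second-moment union bound as you suppose.

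In part (b) your accounting of the extra $1/k$ is also off: labellings of $p$ leaves with at least $q$ repetitions number $\Theta(k^{p-q})$, not $O(k^{p-q-1})$ --- there is no ``representative'' costing an extra literal --- so dividing counts yields only $k^{-q}$. The missing factor comes entirely from the mechanism you mention only in passing: since $t_\infty$ is infinite, the internal boundary $\partial t$ of any finite trimmed shape is nonempty, each boundary node must have had all its children cut, and that event has probability $p(D(v),F(v))\le (D(v)+F(v))F(v)/(2k)$ given the surviving labels; the finitely many shapes of size $p$ and the locally finite geometry of $t_\infty$ near the root are then absorbed into $K_{p,q}(t_\infty)$. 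So the overall plan is recoverable, but as written the proof does not close at its central step.
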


\newcommand{\shape}{\mathtt{shape}}
\newcommand{\sub}{\mathtt{sub}}
\newcommand{\lab}{\mathtt{lab}}
\newcommand{\nt}{\mathtt{notrim}}
\begin{proof}
$(a)$
For a subtree $t$ of \cec$\hat t_\infty$\fincec, we denote by $\shape(t)$ the tree $t$ in which 
the nodes that are leaves of $\hat t_\infty$ have been unlabelled. We emphasize the fact 
that an element $\shape(t)$ may have leaves labelled by connectives ($\land$ or $\lor$), 
and that all its internal nodes are labelled by connectives.

Let us decompose the event $\{\trim(\hat t_{\infty}) \text{ contains at least }q
\text{ repetitions}\}$ according to the different possible realisations of 
$\trim(\hat t_{\infty})$. We denote by $\sub(t_{\infty})$ 
the support of the random variable $\shape(\trim(\hat t_{\infty}))$ 
(where the randomness comes from the random labelling of the tree $t_{\infty}$). 
We let $\sub_m(t_\infty)$ be the subset of $\sub(t_\infty)$ 
consisting of the trees having~$m$ nodes that are leaves of~$t_\infty$. 
With the definition of size of a trimmed tree, 
the elements of $\sub_m(t_{\infty})$ all have size~$m$.
Given a tree $t\in \sub(t_\infty)$, we denote by $\Delta(t)$ (resp.\ 
$\Delta^{\sss [\geq q]}(t)$, resp.\ $\Delta^{\sss [0]}(t)$, resp.\ $\Delta^{\sss [<q]}(t)$) 
the set of all different leaf-labelled version of~$t$ (resp.\ having at least $q$ repetitions, resp.\ 
having no repetitions, resp.\ having at most $q-1$ repetitions); 
we see an element $\ell\in\Delta(t)$ as a function from the set of leaves of~$t$ 
(that are also leaves of $t_\infty$) to $\{x_1,\bar x_1,\dots, x_k, \bar x_k\}$.
Given a leaf $w$ of $t$, we denote by $\ell(w)$ its label according to $\ell\in\Delta(t)$.

Recall that $\Lab(w)$ denotes the random label of the leaf $w$ in $\hat t_\infty$. 
For $t\in \sub(t_\infty)$ and $\ell\in \Delta(t)$, we also 
write $\Lab(t:\hat t_{\infty})=\ell$ for the event 
``$\Lab(w)=\ell(w)$ for every leaf $w$ of $t_{\infty}$ that is also contained in $t$''.

The addition of an index $\mathtt{notrim}$ means that we restrict ourselves 
to the set of labellings $\ell$ of $t$, such that conditionally on $\Lab(t:\hat t_{\infty})=\ell$,
the trimming procedure leaves all the nodes of $t$ consistent, or in other words 
such that $\trim(t) = \ell$.

In the following we argue conditionally to the random labelling of the internal 
nodes of $\hat t_{\infty}$, and the reasoning is valid for any such labelling: 
we denote by $\bbP^{\star}$ this conditional probability. On the one hand, using 
Markov's inequality, we have, for any~$k$,
\begin{align}
&\bbP^{\star}(\trim(\hat t_{\infty}) \text{ contains at least }q\text{ repetitions})\notag\\
&= \bbP^{\star}(\|\trim(\hat t_{\infty})\|\geq \sqrt k ) 
+ \sum_{m=1}^{\lfloor\sqrt k\rfloor} \sum_{t\in \sub_m(t_{\infty})} 
\sum_{\ell \in \Delta_{\mathtt{notrim}}^{[\geq q]}(t)} 
\bbP^{\star}(\shape(\trim(\hat t_{\infty})) =  t \text{ and }\Lab(t:\hat t_{\infty})=\ell)\notag\\
&\leq \frac{\mathbb E^{\star}[\|\trim(\hat t_{\infty})\|^{2q}]}{k^q} 
+ \sum_{m=1}^{\lfloor\sqrt k\rfloor} \sum_{t\in \sub_m(t_{\infty})} 
\sum_{\ell \in \Delta_{\mathtt{notrim}}^{[\geq q]}(t)} 
\bbP^{\star}(\Lab(t:t_{\infty}) = \ell) \bbP^{\star}(\shape(\trim(\hat t_\infty))=t~|~ \Lab(t:\hat t_{\infty}) = \ell)\notag\\
&=\frac{\mathbb E[\|\trim(\hat t_{\infty})\|^{2q}]}{k^q}  
+ \sum_{m=q}^{\lfloor\sqrt k \rfloor} \frac1{(2k)^m} 
\sum_{t\in \sub_m(t_{\infty})} \sum_{\ell \in \Delta_{\mathtt{notrim}}^{[\geq q]}(t)}
\bbP^{\star}(\shape(\trim(\hat t_\infty))=t~|~ \Lab(t:\hat t_{\infty}) = \ell).\label{eq:proof(b)1}
\end{align}
Indeed, for every fixed $t\in \sub_m(t_\infty)$, and $\ell\in \Delta(t)$ the~$m$ leaves
of $\hat t_\infty$ that are in $t$ have independent labels, so that, for all $\ell\in\Delta(t)$,
\[\bbP^\star(\Lab(t:\hat t_{\infty})=\ell) = 
\bbP^\star(\Lab(w)=\ell(w) \text{ for all leaf }w\text{ of }t_{\infty}\text{ that is also a leaf of }t) 
= \frac 1 {(2k)^m}.
\]
We also used that $\mathbb E^{\star}[\|\trim(\hat t_{\infty})\|^{2q}] = \mathbb E[\|\trim(\hat t_{\infty})\|^{2q}]$
since the distribution of $\|\trim(\hat t_{\infty})\|$
does not depend on the labels of the internal nodes of $\hat t_{\infty}$.

Note that, on the other hand, we have, for all integer $m$,
\[\bbP^{\star}(\|\trim(\hat t_{\infty})\|=m)
= \frac1{(2k)^m} 
\sum_{t\in \sub_m(t_{\infty})} 
\sum_{\ell \in \Delta_{\mathtt{notrim}}(t)}
\bbP^{\star}(\shape(\trim(\hat t_\infty))=t~|~ \Lab(t:\hat t_{\infty}) = \ell).\]

To upper bound the number of repetitions, let us look at the following ratio for all $1\leq m <\sqrt k$,
\[\frac{\sum_{t\in \sub_m(t_{\infty})} 
\sum_{\ell \in \Delta_{\mathtt{notrim}}^{\sss [\geq q]}(t)}
\bbP^{\star}(\shape(\trim(\hat t_\infty))=t~|~ \Lab(t:\hat t_{\infty}) = \ell)}
{\sum_{t\in \sub_m(t_{\infty})} 
\sum_{\ell \in \Delta_{\mathtt{notrim}}(t)}
\bbP^{\star}(\shape(\trim(\hat t_\infty))=t~|~ \Lab(t:\hat t_{\infty}) = \ell)}
= \frac{1}{1+\mathtt{rat}}\leq \frac1{\mathtt{rat}},\]
where
\begin{align*}
\mathtt{rat}
&:= 
\frac{\sum_{t\in \sub_m(t_{\infty})} 
\sum_{\ell \in \Delta_{\mathtt{notrim}}^{\sss [<q]}(t)}
\bbP^{\star}(\shape(\trim(\hat t_\infty))=t~|~ \Lab(t:\hat t_{\infty}) = \ell)}
{\sum_{t\in \sub_m(t_{\infty})}
\sum_{\ell \in \Delta_{\mathtt{notrim}}^{\sss [\geq q]}(t)}
\bbP^\star(\shape(\trim(\hat t_\infty))=t~|~ \Lab(t:\hat t_{\infty}) = \ell)}\\
&\geq \frac{\sum_{t\in \mathtt{sub}_m(t_{\infty})} 
\sum_{\ell \in \Delta^{\sss [0]}(t)}
\bbP^{\star}(\shape(\trim(\hat t_\infty))=t~|~ \Lab(t:\hat t_{\infty}) = \ell)}
{\sum_{t\in \sub_m(t_{\infty})}
\sum_{\ell \in \Delta_{\mathtt{notrim}}^{\sss [\geq q]}(t)}
\bbP^{\star}(\shape(\trim(\hat t_\infty))=t~|~ \Lab(t:\hat t_{\infty}) = \ell)},
\end{align*}
because a labelled tree with no repetition cannot be trimmed, so for every $t$, and for every integer $q\geq 1$,
\[\Delta^{\sss [0]}(t)=\Delta^{\sss [0]}_\nt(t)\subseteq \Delta^{\sss [<q]}_\nt(t)
\] 
Given $t\in\sub_m(t_{\infty})$, we denote by $\partial t$ the 
the set of nodes of $t$ that have no children in $t$, but that 
are not leaves of $t_\infty$; so $\partial t$ is the internal vertex boundary
of $t$ inside $t_\infty$. Each node of $\partial t$ has all its progeny that is 
inconsistent, but is not itself inconsistent (see Figure~\ref{fig:trim_explanations}).

\begin{figure}
\begin{center}
\includegraphics[width=.6\textwidth]{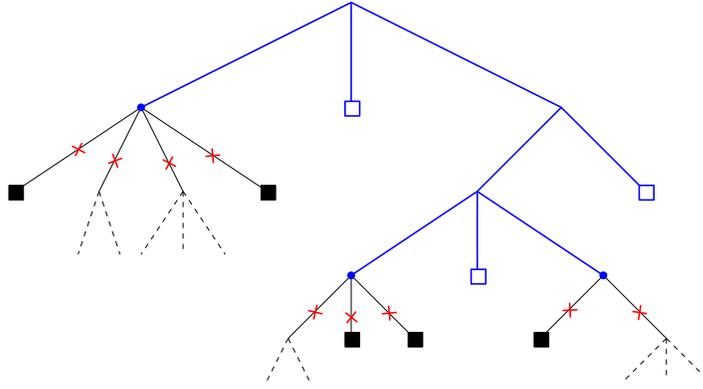}
\end{center}
\caption{This figure represents and infinite but locally finite tree $t_{\infty}$ 
(the dashed parts are possibly infinite trees that we don't represent in this picture) 
and one of its subtrees $t$ in bold blue.
To have $trim(\hat t_{\infty}) = t$ the nine cuts in red must have been made during the trimming procedure.
The three internal nodes marked by a black dot are the elements of $\partial t$, 
and if we denote by $v$ the leftmost one, $D(v) = 1$ and $F(v) = 2$.}
\label{fig:trim_explanations}
\end{figure}

Consider first the case of $\ell \in \Delta^{\sss [0]}(t)$. 
Given a node $v\in \partial t$, let us denote by $D(v)$ the number of leaves in $t$ 
whose father is an ancestor of $v$. 
Conditional on $\{\Lab(t)=\ell\}$, these $D(v)$ leaves 
are labelled according to $\ell$, and since $\ell$ contains no repetition,
they are labelled by $D(v)$ distinct variables.
These $D(v)$ leaves define a set of $D(v)$ literals such that: 
if at least one of the leaves whose parent is $v$ is labelled by one of these literals 
(or its negation, depending on the labelling of the internal nodes which we conditioned on), 
then all the children of $v$ are inconsistent (and thus cut by the trimming procedure).
Let us denote by $F(v)$ the number of leaves in $t_\infty$ whose parent is $v$; 
not that for $v\in \partial t$, none of these leaves can be in $t$.
Let also $p(d,f)$ be the probability that 
at least one among $f$ leaves is labelled by $\alpha_1, \alpha_2, \ldots, $ or $\alpha_d$ 
(where $\{\alpha_1, \ldots \alpha_d\}$ is any fixed subset of $d$ elements of 
$\{x_1, \bar x_1, \ldots, x_k, \bar x_k\}$), 
or two among those $f$ leaves are labelled by a literal and its negation.
(This is clearly independent of the set $\{\alpha_1,\alpha_2,\dots, \alpha_d\}$, provided 
it has cardinality $d$.)
Then, the arguments above and the independence of the leaf labels in $\hat t_\infty$ imply 
that given a subtree $t$ and a labelling of this subtree $\ell\in \Delta^{\sss [0]}(t)$, 
conditioned on the labels of the internal nodes of $\hat t_{\infty}$,
\[\bbP^{\star}(\shape(\trim(\hat t_\infty))=t~|~ \Lab(t:\hat t_{\infty}) = \ell) 
= \prod_{v \in \partial t} p(D(v), F(v)).\]
Note that this probability does not depend on the labels of the internal nodes of 
$\hat t_{\infty}$. This is due to the fact that in the trimming procedure, the labels 
$\land$ and $\lor$, literals and their negations behave symmetrically:
the constraint $\alpha=\True$ is generated by a leaf labelled by $\alpha$ whose parent 
is labelled by $\land$ as well as by a leaf labelled by $\bar \alpha$ whose parent is 
labelled by $\lor$. 

The same arguments are valid for every subtree $t$ of $t_{\infty}$, and for every 
$\ell\in \Delta^{\sss [\geq q]}(t)$, except that since there may be some repetitions in 
$\ell$, for every leaf $v$ in $\partial t$, the number of labellings permitting to trim is 
at most $D(v)$ (and not exactly $D(v)$ as above). 
Therefore, since the function $p(d,f)$ is increasing in $d$,
for every $t\in t_{\infty}$, and $\ell\in \Delta^{\sss [\geq q]}(t)$:
\begin{equation}\label{eq:proof(b)ineq}
\bbP^{\star}(\shape(\trim(\hat t_\infty))=t~|~ \Lab(t:\hat t_{\infty}) = \ell) 
\leq \prod_{v \in \partial t} p(D(v), F(v)).
\end{equation}

Thus, since the cardinalities $|\Delta^{\sss [\geq q]}(t)|$ and $|\Delta^{\sss [0]}(t)|$ depend on the size $m$ of $t$ but not on $t$ itself, we have
\begin{align*}
\mathtt{rat} 
&\geq \frac{\sum_{t\in \sub_m(t_{\infty})} 
\prod_{v \in \partial t} p(D(v), F(v))\,|\Delta^{\sss [0]}(t)|}
{\sum_{t\in \sub_m(t_{\infty})} 
\prod_{v \in \partial t} p(D(v), F(v)) \, |\Delta^{\sss [\geq q]}_{\nt}(t)|}\\
&\geq \frac{\sum_{t\in \sub_m(t_{\infty})} 
\prod_{v \in \partial t} p(D(v), F(v)) \, |\Delta^{\sss [0]}(t)|}
{\sum_{t\in \sub_m(t_{\infty})} 
\prod_{v \in \partial t} p(D(v), F(v)) \, |\Delta^{\sss [\geq q]}(t)|}
\geq \frac{|\Delta_m^{\sss [0]}|}{|\Delta_m^{\sss [\geq q]}|},
\end{align*}
since, for any integer~$q$, for any subtree~$t$ of~$t_{\infty}$, $|\Delta^{\sss [\geq q]}_{\nt}(t)|\leq |\Delta^{\sss [\geq q]}(t)|$, 
where $|\Delta_m^{\sss [\geq q]}|$ (resp.\ $|\Delta_m^{\sss [0]}|$) is the number of ways to label $m$ 
leaves with at least~$q$ repetitions (resp.\ no repetition).
We have,
\[|\Delta_m^{\sss [0]}|
= k(k-1)\cdots (k-m+1) = \frac{k!}{(k-m)!},\]
and,
\[|\Delta_m^{\sss [\geq q]}|
= \sum_{v=1}^{m-q} \left\{\begin{matrix}m\\v\end{matrix}\right\} k (k-1) \cdots (k-v+1)
\leq m^q \sum_{v=1}^{m-q} \left\{\begin{matrix}m-q\\v\end{matrix}\right\} \frac{k!}{(k-v)!}
\leq m^q k^{m-q},\]
where for all integers $n, p\ge 1$, ${n \brace p}$ is the Stirling number of second kind, i.e.
the number of ways to partition a set of $n$ elements into $p$ non-empty parts. 
We have used the standard equality (see, e.g., \cite[][p.\ 207]{Comtet1974a})
\[\sum_{p=1}^n \left\{\begin{matrix}n\\p\end{matrix}\right\} x(x-1)\cdots (x-p+1) = x^n,\]
for all integers $n$ and $x$.
Thus,
\[\mathtt{rat}\geq \frac{k(k-1)\cdots (k-m+1)}{m^q k^{m-q}}\geq \frac{k^q}{m^q} \left(1-\frac{m}{k}\right)^m
\geq \frac{k^q}{2\mathtt e m^q},\]
for all $m < \sqrt k$, for all $k\geq k_0$, where $k_0$ is defined as
$k_0 = \max\{k\geq 1 \colon (1-\nicefrac1{\sqrt k})^{\sqrt k} < \nicefrac1{2\mathtt e}\}$.
We thus get
\begin{align*}
\bbP(\trim(\hat t_{\infty}) \text{ contains at least }q\text{ repetitions})
&\leq \frac{\mathbb E\|\trim(\hat T_{\infty})\|^{2q}}{k^q} + 
2\mathtt e \sum_{m=1}^{\lfloor \sqrt k\rfloor} \frac{m^{q}}{k^q}
\mathbb P(\|\trim(\hat t_{\infty})\|=m)\\
&=\frac{\mathbb E\|\trim(\hat t_{\infty})\|^{2q}+ 2\mathtt e\ \mathbb E\|\trim(\hat t_{\infty})\|^{q}}{k^q},
\end{align*}
for all $k\geq k_0$, as desired.

\vspace{\baselineskip}
$(b)$ We follow the line of argument and use the same notations as in the proof 
of statement $\bds{(a)}$. We have (see Equation~\eqref{eq:proof(b)1})
\begin{align*}
&\mathbb P^{\star}(\trim(\hat t_{\infty})\text{ has size $p$ and at least $q$ repetitions})\\
&\hspace{2cm}= \frac1{(2k)^p} 
\sum_{t\in \sub_p(t_{\infty})} \sum_{\ell \in \Delta_{\mathtt{notrim}}^{[\geq q]}(t)}
\bbP^{\star}(\shape(\trim(\hat t_\infty))=t~|~ \Lab(t:\hat t_{\infty}) = \ell),
\end{align*}
and (see Equation~\eqref{eq:proof(b)ineq}),
\[\bbP^{\star}(\shape(\trim(\hat t_\infty))=t~|~ \Lab(t:\hat t_{\infty}) = \ell) 
\leq \prod_{v \in \partial t} p(D(v), F(v)).\]
Recall that $p(d,f)$ is, by definition, the probability that either
at least one among $f$ leaves is labelled by one of $\alpha_1, \alpha_2, \ldots,$ $\alpha_d$, 
or two among those $f$ leaves are labelled by a literal and its negation.
Using the bound $(1-x)^n\geq 1-nx$ for all integer $n\geq 1$ and for all $x\in (0,1)$, we 
obtain
\[1-p(d,f) = \frac{(2k-d)!}{(2k-d-f)!(2k)^f}\geq 
\left(1-\frac{d+f}{2k}\right)^f \geq 1-\frac{(d+f)f}{2k},\]
so that
\[p(d,f)\leq \frac{(d+f)f}{2k}.\]
Note that since $\hat t_{\infty}$ is an infinite tree and $t\in \sub_p(t_{\infty})$ has 
size $p$, the set $\partial t$ is non empty. 
Let $v$ be a node of $\partial t$: this node has height at most $p$ because 
the tree $t_{\infty}$ and thus $t$ have no unary node (by definition of an and/or tree,
see Section~\ref{sub:def}).
Since $t_{\infty}$ is locally finite, the constant 
\begin{equation}\label{eq:def_F}
F_{\mathtt{max}}:=\max\{F(v) \colon v\in t_{\infty} \text{ having  height at most }p\}
\end{equation}
is finite,
and a similar argument implies that the constant
\begin{equation}\label{eq:def_A}
D_{\mathtt{max}}:=\max\{D(v) \colon v\in t_{\infty} \text{ having  height at most }p\}
\end{equation}
is also finite,
so that that for all node $v\in \partial t$
\[p(D(v), F(v))\leq \frac{(D_{\mathtt{max}}+F_{\mathtt{max}})F_{\mathtt{max}}}{2k},\]
and thus,
\[\bbP^{\star}(\trim(\hat t_\infty)=\ell~|~ \Lab(t) = \ell) 
\leq \frac{(D_{\mathtt{max}}+F_{\mathtt{max}})F_{\mathtt{max}}}{2k}.\]
Therefore, we get
\begin{align*}
&\mathbb P^{\star}(\trim(\hat t_{\infty})\text{ has size $p$ and at least $q$ repetitions})\leq \frac{(D_{\mathtt{max}}+F_{\mathtt{max}})F_{\mathtt{max}}}{(2k)^{p+1}}\ 
|\sub_p(t_{\infty})|\ |\Delta^{\sss [\geq q]}_p|.
\end{align*}
Recall that
\[|\Delta_p^{\sss [\geq q]}|
\leq p^q \frac{k!}{(k-p+q)!}\leq p^q k^{p-q},\]
which gives
\[\mathbb P^{\star}(\trim(\hat t_{\infty})\text{ has size $p$ and at least $q$ repetitions})
\leq (D_{\mathtt{max}}+F_{\mathtt{max}})F_{\mathtt{max}}\ |\sub_p(t_{\infty})|
\ \frac{p^q}{2^{p+1} k^{q+1}}.\]
Noting that $|\sub_p(t_{\infty})|$ is finite concludes the proof with the following choice of $K_{p,q}(t_{\infty})$

\begin{equation}\label{eq:K}
K_{p,q}(t_{\infty}) 
:= (D_{\mathtt{max}}+F_{\mathtt{max}})F_{\mathtt{max}}\ |\sub_p(t_{\infty})|
\ \frac{p^q}{2^{p+1}},
\end{equation}
which completes the proof of $(b)$.
\end{proof}

\subsection{Probability of the two constant functions}
Let $T_{\infty}$ be the limit tree of the random sequence $(T_n)_{n\geq 0}$, and suppose that 
it has a unique end. Let $u_0, u_1, \ldots $ be the nodes of the spine, starting from the root.
Recall that, as already mentioned, the distribution $(T_n)_{n\geq 0}$ and thus $T_{\infty}$ may depend on $k$
(we will provide such an example at the end of the section).
For all $i\geq 0$, we denote by $A_i$ the random forest of finite subtrees hanging from the node $u_i$.


Let us denote by $L_i$ the number of leaves of $\trim(\hat A_i)$. 
In the following, we will assume that
\begin{enumerate}[(\tt{H})]
\item  the sequence $(A_i)_{i\geq 0}$ is i.i.d.
\end{enumerate}
When ({\tt H}) holds, we will denote by $A$ a random variable having the common law 
of the $A_i$, and by $\trim(\hat A)$ the trimmed version of its random labelling. 
Observe that ({\tt H}) implies that $(L_i)_{i\geq 0}$  is a sequence of i.i.d. random 
variables as well, and we denote by $L$ a random variable having their common law.

Given a forest $a$, and two of its leaves $\ell_1$ and $\ell_2$ we denote 
by $C_{\ell_1, \ell_2}$ the number of nodes (of out-degree at least two) of the union of the path 
from $\ell_1$ to one root of $a$ and the path from $\ell_2$ to one root (possibly the same) of $a$. 
We denote by $C_a$ the minimum of those $C_{\ell_1, \ell_2}$ taken on all couple of leaves $(\ell_1,\ell_2)$ 
and by $N_a$ the number of such couples that realize this minimum.


\begin{lem}\label{lem:theta_true}
Suppose that the hypothesis of Theorem~\ref{thm:cv} holds, that $T_\infty$ has almost surely 
a unique end and satisfies ({\tt H}).
Then, asymptotically as $k$ tends to infinity,
\[\frac{1}{k}\; \E{\frac{N_A}{2^{C_A}}} 
\leq \bbP_k(\True) 
= \bbP_k(\False) 
\leq 
\frac{(2\mathtt e+1)\mathbb E[L]+ \mathbb E[L^2]}{k}.
\]
\end{lem}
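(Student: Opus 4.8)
The plan is to establish the central identity $\bbP_k(\True)=\bbP_k(\False)$ first, since it is the easy half: the symmetry of the labelling gives $P_k[t](f)=P_k[t](\neg f)$ for every \emph{finite} tree $t$ (this is the elementary observation made at the start of Section~\ref{sec:local-limit}), and applying it to the truncations $\hat T_\infty^{\,h}$ and passing to the limit via the stabilisation Lemma~\ref{lem:one_spine} yields $\bbP_k(\True)=\bbP(\of[\hat T_\infty]=\True)=\bbP(\of[\hat T_\infty]=\False)=\bbP_k(\False)$. It then remains to prove the two inequalities, which I would do conditionally on $T_\infty$, using hypothesis~({\tt H}) and the trimming procedure $\trim$ of Section~\ref{sec:improved_bounds}.

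For the \textbf{upper bound}, write $\bbP_k(\True)=\bbP(\of[\hat T_\infty]=\True)$ and recall that, by Lemma~\ref{lem:trim+}, $\trim(\hat T_\infty)$ computes the same Boolean function. Thus $\{\of[\hat T_\infty]=\True\}$ forces $\trim(\hat T_\infty)$ to compute a constant, and I would split according to whether $\trim(\hat T_\infty)$ contains a repetition. On the event that it does, Lemma~\ref{lem:trim}(a) with $q=1$ gives a bound of order $\big(\E{\|\trim(\hat T_\infty)\|^2}+2\mathtt{e}\,\E{\|\trim(\hat T_\infty)\|}\big)/k$. On the complementary event, a trimmed tree with no repetition that computes a constant must have been obtained by squashing an internal node (a $\lor$-node collapsing to $\True$, or a $\land$-node collapsing to $\False$), which can occur only if a literal and its negation already appeared, in a suitable relative position, in $\hat T_\infty$; the expected number of such ``hidden'' patterns near the root is of order $\E{L}/k$. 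The last, and in my view most delicate, point is to turn these bounds — which \emph{a priori} involve $\|\trim(\hat T_\infty)\|$ for the whole infinite spine — into the single-forest quantities $\E{L}$ and $\E{L^2}$: using the decomposition of $T_\infty$ as an infinite spine carrying the i.i.d.\ forests $(A_i)$, together with the fact (implicit in the proof of Proposition~\ref{pro:bound-trimmed-GW}) that the number of spine nodes surviving the trimming is stochastically dominated by a geometric random variable, one controls $\|\trim(\hat T_\infty)\|$ by the $L_i$'s and collapses everything to $\big((2\mathtt{e}+1)\E{L}+\E{L^2}\big)/k$.

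For the \textbf{lower bound}, I would condition on $A_0=a$, using that by~({\tt H}) the subtree of $T_\infty$ hanging below $u_1$ is an independent copy of $T_\infty$, so that it suffices to exhibit a labelling of $a$ and of $\diamond_0$ forcing $\of[\hat T_\infty]$ to be constant. Pick a pair of leaves $(\ell_1,\ell_2)$ of $a$ achieving the minimum $C_a$; label them by a literal and its negation, and fix the $C_a$ connectives on the union of their two root-paths together with $\diamond_0$ so that the corresponding portion of the forest evaluates to a constant that propagates up to $u_0$ (with $\diamond_0=\lor$ one gets $\of[\hat T_\infty]=\True$, with $\diamond_0=\land$ one gets $\of[\hat T_\infty]=\False$). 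This event has probability of order $\tfrac1k\,2^{-C_a}$, the $\tfrac1k$ coming from $\ell_1,\ell_2$ receiving a common variable and the $2^{-C_a}$ from the connectives. Summing over the $N_a$ minimal pairs, and noting that distinct pairs can be forced to use distinct variables so that the overlaps cost an extra factor $O(k^{-1})$, gives $\bbP(\of[\hat T_\infty]=\True\mid A_0=a)\ge(1+o(1))\,N_a/(k\,2^{C_a})$; averaging over $a$ then finishes the argument.

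The main obstacle is the upper bound: both the combinatorial claim that a constant-computing trimmed tree is either degenerate or conceals a repetition pattern, and the reduction from the trimmed tree of the whole spine to a single forest, have to be carried out carefully enough to land exactly on $(2\mathtt{e}+1)\E{L}+\E{L^2}$. On the lower-bound side the only real care needed is to verify that the explicit construction loses no constant factor and that the union over the $N_a$ minimal pairs is tight to first order in $1/k$ as $k\to\infty$.
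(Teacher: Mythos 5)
Your treatment of the symmetry $\bbP_k(\True)=\bbP_k(\False)$ and of the lower bound is essentially the paper's: the paper also exhibits, for each forest $a$ in the support of $A$, the $N_a$ minimal pairs of leaves labelled by a variable and its negation with all $C_a$ branching nodes on their root-paths labelled $\lor$, giving $\frac1k\E{N_A/2^{C_A}}$.

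The upper bound, however, has a genuine gap exactly where you flag the "most delicate point". You propose to bound $\bbP(\of[\hat T_\infty]=\True)$ via Lemma~\ref{lem:trim}(a) applied to the whole infinite tree, obtaining moments of $\|\trim(\hat T_\infty)\|$, and then to collapse these to $\E{L}$ and $\E{L^2}$ by arguing that the number of spine nodes surviving the trimming is dominated by a geometric random variable. That domination is a feature of the Galton--Watson analysis in Proposition~\ref{pro:bound-trimmed-GW} (where spine nodes have leaf-children with positive probability, each of which kills its siblings with probability $\nicefrac1{2k}$); it is \emph{not} a consequence of hypothesis~({\tt H}). Under ({\tt H}) alone the forests $A_i$ need not attach any leaves directly to the spine nodes $u_i$: take for instance each $A_i$ to be a single cherry (an internal node with two leaf-children). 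Then $\Lambda(u_i)=\emptyset$ for every $i$, the constraint sets along the spine stay empty forever, the spine is never cut, and $\|\trim(\hat T_\infty)\|=\infty$ almost surely even though $L\le 2$. So your intermediate bound is vacuous in cases where the stated bound is not, and the reduction step cannot be carried out. The paper avoids the infinite tree altogether by a one-step self-similar decomposition at the root: conditioning on $\diamond_0$ and using ({\tt H}) to identify the subtree below $u_1$ with an independent copy of $\hat T_\infty$, one gets $\bbP(\of[\hat T_\infty]=\True)\le \frac12\bbP(\of[\hat A_\lor]=\True)+\frac12\bbP(\of[\hat T_\infty]=\True)+\frac12\bbP(\of[\hat A_\land]=\True)+\frac12\bbP(\text{compensation})$, and the term $\frac12\bbP(\of[\hat T_\infty]=\True)$ is cancelled from both sides. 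Everything is then reduced to the single finite forest $A_0$: the event $\{\of[\hat A_\lor]=\True\}$ forces a repetition in $\trim(\hat A)$ and is handled by Lemma~\ref{lem:trim}(a) applied to $\hat A$ (giving $(2\mathtt e\E{L}+\E{L^2})/k$), while the "compensation" event (a non-constant function of $\hat A_\lor$ combining with a non-constant function of the infinite subtree to give $\True$) is bounded by $\E{L}/k$ via the observation that some essential variable of $\of[\hat A_\lor]$ must then be essential for the other function. This recursive cancellation is the key idea missing from your proposal; without it, or some substitute hypothesis forcing the spine to be cut, the passage from the infinite tree to the law of a single forest does not go through.
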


\medskip
\noindent{\bf Example.}
Before proceeding to the proof, let us present an example that shows that 
the moment conditions in Lemma~\ref{lem:theta_true} cannot be removed altogether. 
Consider a sequence $(X_i)_{i\ge 0}$ of
i.i.d.\ copies of an integer-valued random variable $X$. Let $T_\infty$ consists of an 
infinite spine $(u_i)_{i\ge 0}$, and such that, for each $i\ge 0$, the node $u_i$ has $X_i$ 
leaf-children aside from $u_{i+1}$. Then $L=X$, $C=0$ and $N=X(X-1)/2$. Then,
Lemma~\ref{lem:theta_true} implies that, if $\Ec{X^2}=o(k)$ as $k\to\infty$, we have 
\[\frac{\Ec{X^2}-\Ec{X}}{2k} \le \bbP_k(\True)= \bbP_k(\False) \le \frac{\Ec{X^2} + (2\mathtt e+1)\Ec{X}}k,\]
implying that $\bbP_k(\True) = \bbP_k(\False) = o(1)$;
while if $\Ec{X^2}= \Theta(k)$ then $\bbP_k(\True)=\bbP_k(\False)=\Theta(1).$

\begin{cor}\label{cor:theta_true_particular_case}
Suppose that the assumptions of Lemma~\ref{lem:theta_true} are satisfied.
If additionally, we assume that $\limsup_{k\to+\infty} \mathbb E L^2 < +\infty$, then
asymptotically as $k$ tends to infinity,
$$\bbP_k(\True) = \bbP_k(\False) = \Theta(\nicefrac1k).$$
\end{cor}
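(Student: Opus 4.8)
The statement is an immediate corollary of Lemma~\ref{lem:theta_true}, which already provides, under the assumptions of Lemma~\ref{lem:theta_true}, the two-sided bound
\[
\frac1k\,\mathbb E\!\left[\frac{N_A}{2^{C_A}}\right]\ \le\ \bbP_k(\True)=\bbP_k(\False)\ \le\ \frac{(2\mathtt e+1)\,\mathbb E[L]+\mathbb E[L^2]}{k}.
\]
The plan is simply to turn both sides into $\Theta(\nicefrac1k)$ using the extra moment hypothesis.

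For the upper bound, put $M:=\limsup_{k\to\infty}\mathbb E[L^2]<\infty$ and take $k$ large enough that $\mathbb E[L^2]\le M+1$. Since $L\ge 0$, Jensen's inequality (or Cauchy--Schwarz) gives $\mathbb E[L]\le (\mathbb E[L^2])^{1/2}\le (M+1)^{1/2}$, so for all such $k$ the numerator on the right-hand side above is bounded by the $k$-independent constant $(2\mathtt e+1)(M+1)^{1/2}+(M+1)$, whence $\bbP_k(\True)=O(\nicefrac1k)$. This is exactly where the hypothesis enters: without it the right-hand side may be of order one, as in the example discussed just before the corollary, where $\mathbb E[L^2]=\Theta(k)$ and $\bbP_k(\True)=\Theta(1)$.

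For the matching lower bound it suffices, by the left-hand inequality, to show that $\liminf_{k\to\infty}\mathbb E[N_A/2^{C_A}]>0$. Since the trees under consideration have no node of arity one, each forest $A_i$ hanging off a spine node is non-empty; I would single out an absolute constant $c_0$ and a fixed finite shape $\gamma$ of the forest $A$ (say, two one-leaf trees, or a single cherry) such that, on the event that $A$ contains $\gamma$ near one of its roots, there is a couple of leaves of $A$ whose two root-paths carry together at most $c_0$ branching nodes; on that event $C_A\le c_0$ and $N_A\ge 1$, hence $N_A/2^{C_A}\ge 2^{-c_0}$, so $\mathbb E[N_A/2^{C_A}]\ge 2^{-c_0}\,\bbP(A\text{ contains }\gamma)$. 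Combining with the previous paragraph then yields $\bbP_k(\True)=\bbP_k(\False)=\Theta(\nicefrac1k)$.

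The only genuinely non-routine point is this uniform-in-$k$ positivity $\liminf_k\mathbb E[N_A/2^{C_A}]>0$: when the local limit $T_\infty$, and hence the law of $A$, does not depend on $k$ — as in all the examples of Section~\ref{sec:examples} except the associative tree — it is immediate, $\mathbb E[N_A/2^{C_A}]$ being then a fixed strictly positive constant; in a genuinely $k$-dependent situation it has to be read off from the explicit law of $A$ at hand, and this is the step I would expect to require the most care.
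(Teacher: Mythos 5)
Your upper bound is correct and is exactly the paper's (bounded $\mathbb E L^2$ forces bounded $\mathbb E L$, so the numerator in the right-hand inequality of Lemma~\ref{lem:theta_true} is $O(1)$). The lower bound, however, has a genuine gap, and it is precisely the one you flag yourself. The corollary is stated under the hypotheses of Lemma~\ref{lem:theta_true} plus $\limsup_{k}\mathbb E L^2<\infty$ \emph{only}; the law of $A$ is allowed to depend on $k$ (this is the whole point of the associative-tree example), so for a fixed finite shape $\gamma$ the quantity $\bbP(A\text{ contains }\gamma)$ need not be bounded away from $0$ as $k\to\infty$, and nothing in the stated hypotheses lets you ``read it off from the explicit law of $A$''. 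A proof cannot end by deferring the uniform-in-$k$ positivity of $\mathbb E[N_A/2^{C_A}]$ to a case-by-case inspection: that positivity \emph{is} the content of the lower bound.

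The idea you are missing is that the second-moment hypothesis on $L$ drives the lower bound as well as the upper bound. Since $L=\|\trim(\hat A)\|$ and $\limsup_k\mathbb E L^2<\infty$, Markov's inequality produces an integer $K$ and a constant $c>0$, both independent of $k$, such that $\bbP(L\le K)\ge c$ for all $k$ large. Because $\trim(\hat A)$ is a subforest of $A$, one has $N_A/2^{C_A}\ge N_{\trim(\hat A)}/2^{C_{\trim(\hat A)}}$; and on the event $\{L\le K\}$ the trimmed forest has at most $K$ leaves and no unary nodes, so $C_{\trim(\hat A)}\le K$ and $N_{\trim(\hat A)}\ge 1$. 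Hence $\mathbb E[N_A/2^{C_A}]\ge c\,2^{-K}$ uniformly in $k$, which, fed into the left-hand inequality of Lemma~\ref{lem:theta_true}, gives $\bbP_k(\True)=\Omega(\nicefrac1k)$. This is the paper's argument, and it works verbatim in the $k$-dependent setting where your fixed-shape argument stalls.
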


\begin{proof}
The upper bound is straightforward from Lemma~\ref{lem:theta_true} and from the additional hypothesis. 
For the lower bound observe that $\limsup_{k\to+\infty} \mathbb E L^2 <+\infty$ 
implies that there exists an integer $K>0$ such that the probability that $L\leq K$ 
is greater than a positive constant $c$ (note that these two constants do not depend on $k$). 
Remark as well that, since $\mathtt{trim}(\hat A)$ is smaller than $A$, 
$N_A \geq N_{\mathtt{trim}(\hat A)}$ and
$C_A \leq C_{\mathtt{trim}(\hat A)}$, which implies
\[\frac1k\;\E{\frac{N_A}{2^{C_A}}}\geq 
\frac1k\;\E{\frac{N_{\mathtt{trim}(\hat A)}}{2^{C_{\mathtt{trim}(\hat A)}}}}\geq \frac{c}{k2^K} = \Omega(\nicefrac1k).\qedhere\]
\end{proof}

\begin{proof}[Proof of Lemma~\ref{lem:theta_true}]
The constant function $\True$ is represented by any tree that has two leaves labelled by some 
variable and its negation both connected to the root by a path of $\lor$ connectives.
We thus have
\[\bbP_k(\True)\geq \frac1k \sum_{a\in\Supp A} \mathbb{P}(A=a) \cdot \frac{N_A}{2^{C_A}} 
= \frac{1}{k}\; \E{\frac{N_A}{2^{C_A}}}.\]

Let us now prove the upper bound. 
Assuming that $\hat T_{\infty}$ calculates $\True$ implies that
\begin{itemize}
\item if the root is $\land$-labelled, then the conjunction of the subtrees af $\hat A_0$ calculates $\True$;
\item if the root is $\lor$-labelled,
\begin{itemize}
\item either the disjunction of the subtrees of $\hat A_0$ calculates $\True$;
\item or the infinite subtree of the root of $\hat T_{\infty}$ calculates $\True$;
\item or the disjunction of the subtrees of $\hat A_0$ calculates a non-constant function $\eta$, the infinite subtree of the root calculates a non-constant function $f$, and $\eta \lor f$ is the constant function $\True$: in this case, we say that there is {\it compensation}.
\end{itemize}
\end{itemize}
Assumption $\mathtt{(H)}$ tells us that the infinite subtree of the root of $\hat T_{\infty}$ is distributed as $\hat T_{\infty}$. Thus, the above implication tells us that
\begin{align*}
\mathbb P(\of[\hat T_{\infty}]= \True)
\leq &\frac12 \mathbb P(\of[\hat A_{\lor}]= \True) 
+ \frac12 \mathbb P(\of[\hat T_{\infty}]= \True)\\
&+ \frac12 \mathbb P(\of[\hat A_{\land}]= \True)
+ \frac12 \mathbb P(\text{compensation}),
\end{align*}
where $\of[\hat A_{\lor}]$ (resp. $\of[\hat A_{\land}]$) stands for the Boolean function calculated by the disjunction (resp. conjunction) of the subtrees of the forest~$\hat A$.
Using the fact that $\mathbb P(\of[\hat A_{\land}]= \True)\leq \mathbb P(\of[\hat A_{\lor}]= \True)$, 
we obtain
\begin{equation}\label{eq:decomposition}
\mathbb P(\of[\hat T_{\infty}]= \True)
\leq 2\mathbb P(\of[\hat A_{\lor}]= \True) + \mathbb P(\text{compensation}).
\end{equation}

Note that the event $\{\of[\hat A_{\lor}]= \True\} = \{\of[\trim (\hat A)_{\lor}]= \True\}$ is contained in the event 
``at least two leaves of $\trim(\hat A)$ are labelled by the same variable'':
\[\mathbb P(\of[\hat A_\vee]= \True)
\leq \mathbb P(\trim(\hat A)\text{ contains at least 1 repetition}),\]
which implies, in view of Lemma~\ref{lem:trim}:
\begin{equation}\label{eq:gi}
\p{\of[\hat A_\vee]= \True}
\leq \frac{2\mathtt e \Ec{\|\trim (\hat A)\|} + \Ec{\|\trim (\hat A)\|^2}}{k} \leq \frac{2\mathtt e \mathbb E L + \mathbb E L^2}{k}.
\end{equation}

Let us now study the probability of the event $\{\text{compensation}\}$.
On this event, the disjunction of the subtrees of $\hat A_1$ calculates a non-constant function, 
which we denote by $\eta$. Note that $\eta$ has then at least one essential variable.
Let us denote by $f$ the random Boolean function calculated by the infinite subtree (rooted at $\nu_2$).
Let us prove that the event $\{\eta \lor f\equiv \True\}$ is contained into the event 
``at least one essential variable of $\eta$ is an essential variable for $f$''.
Assume for a contradiction that no essential variable of $\eta$ is essential for $f$: 
then there exists an assignation $w$ of the essential variables of $\eta$ such that
$\eta_{|w} = \False$ and $f_{|w}\neq \True$ thus, $f\lor\eta_{|w} \neq \True$,
which is impossible since $f\lor\eta_{|w}=\True$ by assumption. 
Thus there exists at least one essential variable of $\eta$ which is also essential for $f$.
Let us denote by $\Gamma$ the (random) number of essential variables of $\eta$. 
Note that, with probability one, $\Gamma\le L_1$, and that $L_1$ is (unconditioned and) 
distributed as $L$. Then, 
\begin{align}\label{eq:gic}
\pc{f\lor \eta = \True \text{ and } \text{compensation}}
&\leq \sum_{\gamma=1}^{k} \pc{\Gamma=\gamma}
\sum_{j=1}^{\gamma}\pc{x_j\text{ is an essential variable of }\eta} \notag \\
&\leq \sum_{\gamma=1}^{k} \pc{\Gamma=\gamma}\frac{\gamma}{k}\notag\\
&\leq \frac{\mathbb E L}{k}.
\end{align}
Combining Equations \eqref{eq:gi}, \eqref{eq:gic} and~\eqref{eq:decomposition} completes the proof.
\end{proof}

\noindent{\bf Example: the associative trees.} 
We already introduced this example in Section~\ref{sub:assoc}:
Let $G_{n,k}$ be a random tree uniformly distributed among trees having $n$ nodes (leaves and internal nodes), 
labelled with $k$ variables and such that no internal node has a unique child. 
Forget the labels of $G_{n,k}$, it gives a random non-labelled tree $T_{n,k}$ such that 
$\hat T_{n,k}$ and $G_{n,k}$ have the same distribution. 
In this case, $T_{n,k}$ really depends on $k$ as so does the random variable $A=A_k$
(being the common law of the random forests of trees hanging on the infinite spine).
We have shown in Subsection~\ref{sec:examples} that $T_{n,k}$ is a critical Galton--Watson tree.
 Using Equation~\eqref{eq:GW_assoc}, one can check that, with high probability when $k\to+\infty$,
$C_A = 0$, and that $\Ec{N_A 2^{-C_A}} \sim k$ as $k$ tends to infinity.
Lemma~\ref{lem:theta_true} applies and gives that $\bbP_k(\True) = \Theta(1)$ (a result already proved in~\cite{GGKM15}).

\subsection{Probability of a given Boolean function}
In this section, we aim at proving the analog of the result of \citet{Kozik2008a}, 
namely estimate $\mathbb P_k(f)$ for any given Boolean function $f$.
To do so, we will need additional assumptions: we still assume~({\tt H}) and additionally require that
$A$ and $L$ do \emph{not} depend on $k$ (recall that the local limit may depend on $k$).


Given a Boolean function $f$, and a random forest $\Upsilon$, 
we denote by $L_{\Upsilon}(f)$ the effective complexity of $f$ according to $\Upsilon$ as follows.
Let $\mathcal M_{\Upsilon}(f)$ be the set of forests in the support of $\Upsilon$ that can be 
labelled so that the disjunction of their subtrees calculates the function $f$; 
$L_{\Upsilon}(f)$ is the the size of the smallest forests in $\mathcal M_{\Upsilon}(f)$.
In the following theorem, the quantity $L_A(f)$ is used; 
we recall that, under ({\tt H}), $A$ is a random forest whose distribution is the joint distribution of the~$A_i$'s, 
the forests hanging on the spine of the local limit of $(T_{k,n})_{n\geq 1}$.

For any tree $t$ and any integer $m$, we denote by $N_m(t)$ the number of nodes that are at distance at most $m$ of the root in $t$.
 
\begin{thm}\label{thm:theta}
Suppose that for $k\ge 1$, $(T_{k,n})_{n\ge 0}$ is a sequence of unlabelled random trees converging in 
distribution to a local limit $T_\infty$ with a unique end, satisfying $\mathtt{(H)}$, 
and such that $A$ and $L$ do not depend on~$k$. 
Suppose further that for all integer $m\geq 1$,
\begin{enumerate}[(i)]
\item $\Ec {N_m^{m+2}(T_{\infty})}<+\infty$, and 
\item $\Ec{\|\trim(\hat T_\infty)\|^m}<\infty$.
\end{enumerate}
Let $k_0$ be an integer independent of $k$ 
and let $f$ be a $k_0$-variable Boolean function, then there 
exists constants $c,C\in (0,\infty)$ such that for all $k$ large enough
\[c \cdot \frac{\bbP_k(\True)}{k^{L_A(f)}}
\leq \bbP_k(f) 
\leq C \cdot \frac{\bbP_k(\True)+\nicefrac1k}{k^{L(f)}}.\]
\end{thm}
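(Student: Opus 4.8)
\medskip
\noindent\textbf{Strategy.}
Write $\bbP_k=P_k[T_\infty]$, let $(u_i)_{i\ge0}$ be the spine of $T_\infty$, let $\diamond_i$ be the connective at $u_i$, and for $i\ge0$ let $T_\infty^{(i)}$ be the subtree of $T_\infty$ rooted at $u_i$. By $\mathtt{(H)}$, $T_\infty^{(i)}$ has the law of $T_\infty$, it is independent of $\diamond_i$, and $T_\infty^{(i+1)}$ is independent of the forest $A_i$. Writing $\of[\hat A_{i,\diamond}]$ for the function computed by the $\diamond$-combination of the (finitely many) subtrees of $\hat A_i$, we have the basic peeling identity $\of[\hat T_\infty]=\of[\hat A_{0,\diamond_0}]\,\diamond_0\,\of[\hat T_\infty^{(1)}]$. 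Both inequalities will be proved by induction on $L(f)$; the base case $L(f)=0$ is Lemma~\ref{lem:theta_true} (with the convention $L_A(\True)=L_A(\False)=0$), which also records that $\bbP_k(\True)=\bbP_k(\False)=\Theta(1/k)$ as $k\to\infty$, the bound $\Ec{L^2}<\infty$ coming from hypothesis~(ii) since $L\le\|\trim(\hat T_\infty)\|$.

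\medskip
\noindent\textbf{Lower bound.}
It suffices to exhibit one favourable event. If $\mathcal M_A(f)=\varnothing$ then $L_A(f)=+\infty$ and there is nothing to prove; otherwise fix a forest $a^\star\in\mathcal M_A(f)$ with $\|a^\star\|=L_A(f)$. With probability $\tfrac12$, $\diamond_0=\lor$; with probability $c_0:=\bbP(A_0=a^\star)>0$ (a constant, as $A$ does not depend on $k$), $A_0=a^\star$, and then, conditionally, with probability $(2k)^{-L_A(f)}$ the leaves of $\hat A_0$ receive a labelling making $\of[\hat A_{0,\lor}]=f$; and, independently of all of this, with probability $\bbP_k(\False)=\bbP_k(\True)$, $\of[\hat T_\infty^{(1)}]=\False$. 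On the intersection, $\of[\hat T_\infty]=f\lor\False=f$, so
\[
\bbP_k(f)\ \ge\ \frac{c_0}{2^{\,L_A(f)+1}}\cdot\frac{\bbP_k(\True)}{k^{L_A(f)}}.
\]

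\medskip
\noindent\textbf{Upper bound.}
Fix a non-constant $k_0$-variable $f$ and assume the claim for smaller complexity. Condition on $\diamond_0$; by the symmetry $P_k[t](g)=P_k[t](\neg g)$ it is enough to treat $\diamond_0=\lor$, and we set $g=\of[\hat A_{0,\lor}]$, $h=\of[\hat T_\infty^{(1)}]$, which are independent with $h\eqdist\of[\hat T_\infty]$. On $\{\diamond_0=\lor\}$ the event $\{\of[\hat T_\infty]=f\}$ is $\{g\lor h=f\}$, and since $f$ is non-constant the cases $g=\True$ and $h=\True$ are impossible. There remain three contributions: (i) $g=\False$, hence $h=f$: by a repetition argument as in Lemma~\ref{lem:theta_true} (via Lemma~\ref{lem:trim}(a)), $\bbP(\of[\hat A_{0,\lor}]=\False)=O(1/k)$, so this contributes $O(1/k)\,\bbP_k(f)$, absorbed on the left below; (ii) $h=\False$, hence $g=f$: this contributes $\tfrac12\,\bbP(\of[\hat A_{0,\lor}]=f)\,\bbP_k(\False)$; (iii) $g,h$ both non-constant with $g\lor h=f$, in which case $h\le f$, $g\le f$ and $L(g)+L(h)\ge L(f)$ (joining minimal trees of $g$ and $h$ under an $\lor$). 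For (ii) and (iii) one needs the key estimate
\begin{equation}\label{eq:key-comb}
\bbP\big(\of[\hat A_{0,\lor}]=g\big)\ \le\ \frac{C(g)}{k^{L_A(g)}}\qquad(g\notin\{\True,\False\}),
\end{equation}
with $C(g)$ independent of $k$; then (ii) is $\le C(f)\,k^{-L_A(f)}\bbP_k(\True)\le C(f)\,k^{-L(f)}\bbP_k(\True)$, and in (iii), writing the sum as $\tfrac12\sum_g\bbP(\of[\hat A_{0,\lor}]=g)\sum_{h:\,g\lor h=f}\bbP_k(h)$ and using $L_A(g)\ge L(g)\ge L(f)-L(h)$, the induction hypothesis $\bbP_k(h)\le C(h)(\bbP_k(\True)+k^{-1})k^{-L(h)}$ makes each pair with $L(h)<L(f)$ contribute $O\!\big((\bbP_k(\True)+k^{-1})k^{-L(f)}\big)$, while the residual pairs ($g=f$, or $h=f$, or $L(h)\ge L(f)$) are bounded by a further repetition estimate on $\bbP(\of[\hat T_\infty]\text{ non-constant and }\le f)=O(1/k)$ and contribute $o(1)\,(\bbP_k(\True)+k^{-1})k^{-L(f)}$. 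Summing the three contributions yields $\bbP_k(f)\,(1-O(1/k))\le C(f)\,(\bbP_k(\True)+1/k)\,k^{-L(f)}$, whence the claim for $k$ large.

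\medskip
\noindent\textbf{Main obstacle.}
The crux is~\eqref{eq:key-comb}, and more precisely the underlying combinatorial fact: \emph{for a fixed Boolean function $f$ and a fixed tree (or forest) shape with $p$ leaves, the number of leaf-labellings with values in $\{x_1,\bar x_1,\dots,x_k,\bar x_k\}$ whose value is $f$ is at most $C(f,p)\,k^{p-L(f)}$, with $C(f,p)$ not depending on $k$} --- the idea being that such a labelling carries an embedded minimal representation of $f$ on some $L(f)$ of its leaves, chosen and labelled in a number of ways bounded independently of $k$, while the remaining $p-L(f)$ leaves are free up to $2k$ choices each. One must then sum this estimate over the (random) supports $\mathtt{sub}_p(A_0)$ and $\mathtt{sub}_p(T_\infty)$ and over $p$: this is where hypotheses~(i) and~(ii) enter, after truncating the sum at a slowly growing cutoff $p_0=p_0(k)$ and bounding the tail $p>p_0$ by Markov's inequality together with Lemma~\ref{lem:trim}(b) --- and this is also where the extra additive $1/k$ in the upper bound originates: for $T_\infty$ infinite, any finite trimmed tree already contains a repetition, so $\partial\,\trim(\hat T_\infty)\neq\varnothing$ and Lemma~\ref{lem:trim}(b) applies with exponent at least $2$. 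Making all of these sums converge with constants uniform in $k$ is the delicate part of the argument.
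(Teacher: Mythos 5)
Your lower bound is essentially the paper's, but your architecture for the upper bound is genuinely different: you peel off the root and run an induction on $L(f)$ over the decomposition $f=g\lor h$, in the recursive style of Chauvin et al.\ and Kozik, whereas the paper gives a direct, non-inductive argument. The paper observes that on $\{\of[\hat T_\infty]=f\}$ the trimmed tree $\trim(\hat T_\infty)$ still computes $f$, hence has size $p\ge L(f)$ and (by assigning the non-essential variables and simplifying) at least $p-\Ess(f)$ repetitions; it then exploits the permutation symmetry among the $\binom{k}{\Ess(f)}$ functions obtained from $f$ by relabelling variables, and concludes with the two parts of Lemma~\ref{lem:trim} (part (b) for $p=L(f)$, part (a) for $p\ge L(f)+1$), using hypotheses (i) and (ii) only to check that the resulting constants are finite. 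No recursion at the root is needed.

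The gap in your route is exactly where you locate it: the estimate $\bbP(\of[\hat A_{0,\lor}]=g)\le C(g)\,k^{-L_A(g)}$ is asserted, not proved, and the one-line justification (``the labelling carries an embedded minimal representation of $g$ on some $L(g)$ of its leaves'') is not correct as stated — a labelled tree computing $g$ need not contain a minimal subtree representing $g$; the correct statement goes through the trimmed tree and repetition counts, i.e.\ through the very machinery of Lemma~\ref{lem:trim} that the paper applies directly, so the detour through the root decomposition does not save you that work. Two further uniformity problems would remain even granting that estimate. First, in your case (iii) the sum runs over all pairs $(g,h)$ of $k$-variable functions with $g\lor h=f$; their number grows doubly exponentially in $k$, so ``each pair contributes $O((\bbP_k(\True)+1/k)k^{-L(f)})$'' does not conclude without regrouping the sum and controlling $\sum_g\bbP(\of[\hat A_{0,\lor}]=g)$ over classes of $g$, including the near-diagonal terms you defer to ``a further repetition estimate.'' Second, such $h$ (and $g$) generally depend on variables outside $\{x_1,\dots,x_{k_0}\}$ (e.g.\ $g=f\land x_j$, $h=f\land\bar x_j$), so the induction hypothesis — stated for functions on a fixed number of variables, with constants depending on the function — does not apply to them with constants uniform in $k$. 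The paper's $\binom{k}{\gamma}$ symmetry argument is precisely what substitutes for these uniformity considerations.
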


Note that Assumption~$(ii)$ implies in particular that~$L$ has finite moments of all orders, and thus, under the assumptions of Theorem~\ref{thm:theta}, $\mathbb P_k(\True) = \Theta(\nicefrac1k)$, in view of Corollary~\ref{cor:theta_true_particular_case}.
Also note that in all examples considered in this article, 
the support of the random variable $A$ will be such that $L_A\equiv L$ on the set of Boolean function, 
and the following Corollary applies to these examples:

\begin{cor}
Under the assumptions of Theorem~\ref{thm:theta} and assuming that $L_A(f) = L(f)$, we have
\[\bbP_k(f) = \Theta\left(\frac1{k^{L(f)+1}}\right).\]
\end{cor}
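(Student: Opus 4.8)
\textbf{The plan} is to prove the two inequalities separately: the lower bound by a direct construction, and the upper bound through the trimming procedure of Section~\ref{sec:bounds}, Lemma~\ref{lem:trim}, and a combinatorial estimate relating the number of repetitions to complexity. \emph{Lower bound.} If $\mathcal M_A(f)=\emptyset$ then $L_A(f)=+\infty$ and there is nothing to prove; otherwise I would fix a forest $a^\star\in\mathcal M_A(f)$ of size $L_A(f)$ together with a labelling of its leaves and internal nodes making the disjunction of its subtrees compute $f$, and consider the event that the root $u_0$ of $T_\infty$ is $\lor$-labelled, that the forest $A_0$ hanging from $u_0$ equals $a^\star$ with exactly this labelling, and that the infinite subtree of $T_\infty$ rooted at $u_1$ computes $\False$; on this event $\of[\hat T_\infty]=f\lor\False=f$. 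By $\mathtt{(H)}$ the subtree at $u_1$ is an independent copy of $T_\infty$ and the spine labels are i.i.d.\ uniform and independent of the $A_i$'s, so these three events are independent, of respective probabilities $\tfrac12$, $\ \bbP(A_0=a^\star)\,(2k)^{-L_A(f)}\,2^{-\#\{\text{internal nodes of }a^\star\}}=\Omega_f(k^{-L_A(f)})$, and $\bbP_k(\False)=\bbP_k(\True)$; multiplying gives $\bbP_k(f)\ge c\,\bbP_k(\True)\,k^{-L_A(f)}$.

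\emph{Upper bound: setup.} Since $\trim(\hat T_\infty)$ computes the same function as $\hat T_\infty$ (Lemma~\ref{lem:trim+}) and a tree computing $f$, after absorbing its constant leaves, has at least $L(f)$ literal leaves, on $\{\of[\hat T_\infty]=f\}$ the tree $\trim(\hat T_\infty)$ has $P:=\|\trim(\hat T_\infty)\|\ge L(f)$. A key elementary fact is that \emph{every and/or tree computing $f$ has at least $L(f)-\Ess(f)$ repetitions}: setting each inessential variable of $f$ to a suitable value and absorbing the resulting constant leaves produces a tree still computing $f$ — hence with at least $L(f)$ leaves — while deleting at least one leaf per distinct inessential variable. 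Consequently, on $\{\of[\hat T_\infty]=f\}$ the number $Q$ of repetitions of $\trim(\hat T_\infty)$ satisfies $Q\ge L(f)-\Ess(f)$, and $Q=0$ forces $\Ess(f)=L(f)$.

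\emph{Upper bound: the estimate.} I would write $\bbP_k(f)=\bbP(\of[\hat T_\infty]=f,\,Q\ge L(f)+1)+\bbP(\of[\hat T_\infty]=f,\,Q\le L(f))$. The first term is at most $\bbP(Q\ge L(f)+1)$, which by Lemma~\ref{lem:trim}(a) and assumption $(ii)$ — making all moments of $\|\trim(\hat T_\infty)\|$ finite — is $O(k^{-(L(f)+1)})$. For the second term only the finitely many values $L(f)-\Ess(f)\le q\le L(f)$ of $Q$ occur; for each I would sum over the size $p\ge L(f)$ and the shape $t\in\sub_p(T_\infty)$ of $\trim(\hat T_\infty)$, bounding, as in the proof of Lemma~\ref{lem:trim} and after conditioning on the labels of the internal nodes, the contribution of $t$ by $(2k)^{-p}$ times the number of $q$-repetition labellings $\ell$ of $t$ with $\of[t_\ell]=f$ times $\prod_{v\in\partial t}p(D(v),F(v))$. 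The number of such labellings is $O_{p,f}(1)\,k^{(p-q)-\Ess(f)}$, since the $\Ess(f)$ essentially-used variable-classes are pinned down up to an $O_f(1)$ factor while the remaining $(p-q)-\Ess(f)$ classes range over at most $k^{(p-q)-\Ess(f)}$ choices of distinct variables. Combining $(2k)^{-p}$ with $k^{(p-q)-\Ess(f)}$ and using $q+\Ess(f)\ge L(f)$ produces the factor $k^{-L(f)}$; the factors $p(D(v),F(v))$, of which at least one is present because $T_\infty$ is a.s.\ infinite (so $\partial t\ne\emptyset$), supply a further $k^{-1}$; and assumptions $(i)$ and $(ii)$ are exactly what is needed to sum the resulting bound over $p$ and over the random shapes. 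This gives $\bbP(\of[\hat T_\infty]=f,\,Q\le L(f))=O(k^{-(L(f)+1)})$, and since $\bbP_k(\True)+\tfrac1k\ge\tfrac1k$ the stated upper bound follows.

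\emph{The main difficulty} I foresee is this last summation. One cannot afford to bound $\sum_{t\in\sub_p(T_\infty)}\prod_{v\in\partial t}p(D(v),F(v))$ crudely by the number of shapes times a uniform $O(1/k)$: instead one keeps the product of cut probabilities and the shape count together, estimating the sum through the identity $(2k)^{-p}\sum_t\sum_{\ell\in\Delta_{\nt}(t)}\bbP^\star(\shape(\trim(\hat T_\infty))=t\mid\Lab(t)=\ell)=\bbP^\star(\|\trim(\hat T_\infty)\|=p)\le 1$, while separately handling spine and boundary nodes of atypically large out-degree via the moment bound $(i)$. Once this is under control, the rest is routine.
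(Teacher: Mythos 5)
The statement you are proving is, in the paper, an immediate consequence of Theorem~\ref{thm:theta}: one substitutes $L_A(f)=L(f)$ and uses the fact, recorded in the remark preceding the corollary (assumption $(ii)$ gives $\limsup_k\E{L^2}<\infty$, hence $\bbP_k(\True)=\Theta(\nicefrac1k)$ by Corollary~\ref{cor:theta_true_particular_case}). You instead re-derive both bounds of Theorem~\ref{thm:theta}. Your lower bound is essentially the paper's construction, but it stops at $\bbP_k(f)\ge c\,\bbP_k(\True)\,k^{-L_A(f)}$; to reach $\Omega(k^{-(L(f)+1)})$ you still need $\bbP_k(\True)=\Omega(\nicefrac1k)$, which you never establish and which must be imported from Corollary~\ref{cor:theta_true_particular_case}. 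That is a small, fixable omission.

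The upper bound has a genuine gap. You decompose according to the number of repetitions $Q$ rather than, as the paper does, according to the size $\|\trim(\hat T_\infty)\|$. The term $\{Q\ge L(f)+1\}$ is fine via Lemma~\ref{lem:trim}$(a)$. But in the term $\{\of[\hat T_\infty]=f,\ Q\le L(f)\}$ you must sum over all sizes $p\ge L(f)$, and for the worst value $q=L(f)-\Ess(f)$ your labelling count only yields $k^{-q-\Ess(f)}=k^{-L(f)}$; the missing $\nicefrac1k$ is supposed to come from the cut probabilities $p(D(v),F(v))$. These two ingredients cannot be combined as you sketch: the identity $(2k)^{-p}\sum_t\sum_\ell\bbP^{\star}(\cdots)=\bbP^{\star}(\|\trim(\hat T_\infty)\|=p)$, which is what makes the sum over $p$ converge under assumption $(ii)$, already consumes the entire product $\prod_{v}p(D(v),F(v))$, so no extra $\nicefrac1k$ survives; whereas if you pull out one factor $p(D(v_0),F(v_0))=O(\nicefrac1k)$, the remaining sum over shapes must be bounded by $|\sub_p(T_\infty)|\le\binom{N_p(T_\infty)}{p}$ as in Lemma~\ref{lem:trim}$(b)$, and assumption $(i)$ controls this only for each \emph{fixed} $p$ --- the resulting series in $p$ diverges already for Galton--Watson limits. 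The missing idea is the refinement of your ``key elementary fact'': if a tree of size $p\ge L(f)+1$ computes $f$, it has at least $L(f)+1-\Ess(f)$ repetitions (kill the leftmost inessentially-labelled leaf together with at least one of its siblings, as in the paper's proof of Theorem~\ref{thm:theta}). With this, the case $p\ge L(f)+1$ needs no cut factor at all --- the repetition count alone gives $k^{-(L(f)+1)}$ and sums over $p$ by assumption $(ii)$ --- and the cut factor is needed only for the single size $p=L(f)$, where Lemma~\ref{lem:trim}$(b)$ applies directly. That is precisely the paper's split; alternatively, you could simply invoke Theorem~\ref{thm:theta}, which the corollary's hypotheses hand to you.
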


\begin{proof}[Proof of Theorem~\ref{thm:theta}]
If $\hat T_{\infty}$ is $\lor$-rooted, if
the infinite subtree rooted at $u_1$ calculates $\False$
(recall that $u_1$ is the second node on the infinite spine, $u_0$ being the root of $\hat T_{\infty}$), 
and if the disjunction of the subtrees of $A_0$ calculates $f$,
then $\hat T_{\infty}$ calculates $f$.
Let $t$ be a forest in $\mathcal M_A(f)$. Since the number of internal 
nodes is maximized when $t$ is a binary tree, $\frac1{k^{\|t\|}} \frac1{2^{\|t\|-1}}$ is a 
lower bound of the probability that $t$ is labelled such that $\hat t$ calculates $f$, 
and the probability that $\hat A_0$ calculates $f$ is at least
$\bbP(A=t) \frac1{k^{\|t\|}} \frac1{2^{\|t\|-1}}$. As the
labels of disjoints sets of nodes are independent, the lower bound is thus proved. 

We now focus on the upper bound. If $\hat{T}_{\infty}$ represents $f$, then 
$\trim(\hat T_{\infty})$ also computes the function $f$, which implies that its size 
is at least $L(f)$. Thus in order to prove the upper bound, it suffices to show that
\[
\pc{\of[\hat T_{\infty}]=f \text{ and } \|\trim(\hat T_\infty)\|\ge L(f)} 
= \mathcal{O}\left(\frac{1}{k^{L(f)+1}}\right).
\]
%
Let us first prove that
\begin{equation}\label{eq:trim+=f}
\bbP(\of[\hat T_{\infty}]=f \text{ and } \|\trim(\hat T_{\infty})\| = L(f)) = \mathcal O \left(\frac{1}{k^{L(f)+1}}\right).
\end{equation}
Recall that the number of repetitions is formally defined as the difference 
between the number of leaves and the number of pairwise different variables appearing as labels of these leaves 
(in their positive or negated form).
Assume that $\trim (\hat T_{\infty})$ has size $L(f)$ and 
that $d\geq 1$ leaves of $\trim (\hat T_{\infty})$ are labelled by a non-essential 
variable (or its negation). 
Assign all the non-essential variables appearing in $\trim(\hat T_{\infty})$ to $\True$, 
and simplify the tree according to Boolean logic, i.e. using the following four simplifying 
rules: for every Boolean function $h$
\[\True \land h = h \qquad \True \lor h = \True 
\qquad \False \land h = \False \qquad \False \lor h = h.\]
Since we have only assigned values to non-essential variables, 
the tree obtained still calculates $f$, but since we have removed $d$ leaves, 
it has size at most $L(f)-d\leq L(f)-1$, which is impossible since $L(f)$ is 
minimum size of a tree computing $f$.
Therefore, if $\trim (\hat T_{\infty})$ has size $L(f)$, then all its leaves are labelled 
by essential variables, which implies that it contains exactly $L(f)-\gamma$ repetitions,
where $\gamma$ denotes the number of essential variables of $f$.

In other words,
\begin{align*}
& \{\of[\hat T_{\infty}]=f \text{ and }\|\trim(\hat T_{\infty})\|=L(f)\}\\ 
&\hspace{2cm}
~~ \subseteq ~~
\{\trim (\hat T_{\infty}) \text{ has size $L(f)$ and contains at least }L(f)-\gamma\text{ repetitions}\}.
\end{align*}
Note that by symmetry, the above inclusion is true for any Boolean function $f$ having 
$\gamma$ essential variables. It thus implies that,
\begin{align*}
&\binom{k}{\gamma}
\bbP(\of[\hat T_{\infty}]=f\text{ and } \|\trim(\hat T_{\infty})\| = L(f))\\
&\hspace{1cm}\leq \mathbb P(\trim (\hat T_{\infty}) \text{ has size $L(f)$ and contains exactly }L(f)-\gamma\text{ repetitions})\\
&\hspace{1cm}\leq \frac{\Ec{K_{L(f), L(f)-\gamma}(T_{\infty})}}{k^{L(f)+1-\gamma}},
\end{align*}
in view of Lemma~\ref{lem:trim}$(b)$,
where $K_{p,q}(t_{\infty})$ is defined in Equation~\eqref{eq:K} for any infinite locally finite tree $t_{\infty}$ and any integers $p$ and $q$.
Since $\binom k \gamma \ge (k-\gamma)^\gamma$, 
it is enough to prove that $\Ec{K_{L(f), L(f)-\gamma}(T_{\infty})}$ is a finite constant (independent of~$k$).
Note that $|\mathtt{sub}_{p}(T_{\infty})|$ is bounded from above by 
$\binom{N_{p}(T_{\infty})}{p}$ for all integer $p$ 
where $N_{p}(T_{\infty})$ is the (random) number of nodes at height at most $p$ in 
$T_{\infty}$.
Moreover, in view of the definitions of $A_{\mathtt{max}}$ and $F_{\mathtt{max}}$ (see Equations~\eqref{eq:def_F} and~\eqref{eq:def_A}),
we have that $F_{\mathtt{max}}$ is bounded from above by the maximal out-degree of all nodes at height at most $L(f)$ in $T_{\infty}$,
and thus $F_{\mathtt{max}}\le N_{\scriptscriptstyle L(f)+1}(T_{\infty})$.
Note also that $A_{\mathtt{max}}\le L(f)$. It follows that
\[K_{\sss L(f), L(f)-\gamma}(T_{\infty})
\leq (L(f)+N_{\sss L(f)+1}(T_{\infty}))N_{\sss L(f)+1}(T_{\infty})
\ \binom{N_{\sss L(f)}(T_{\infty})}{L(f)}
\ \frac{L(f)^{L(f)-\gamma}}{2^{L(f)+1}},\]
and thus has finite expectation in view of Assumption $(i)$.
We thus have proved \eqref{eq:trim+=f}

The second case, when $\|\trim(\hat T_\infty)\|>L(f)$ is similar, but simpler. 
Let us now prove that
\begin{equation}\label{eq:trim+>=f}
\mathbb P(\of[\hat T_{\infty}]=f \text{ and } \|\trim(\hat T_{\infty})\| \geq L(f)+1) 
= \mathcal O \left(\frac{1}{k^{L(f)+1}}\right)
= \mathcal O\left(\frac{\mathbb P_k(\True)+\nicefrac1k}{k^{L(f)}}\right).
\end{equation}
Assume that $\trim(\hat{T}_{\infty})$ has $d\geq p-L(f)$ leaves labelled by variables that are 
non-essential for $f$. Take $\ell$ the left-most one, and denote by $\nu$ its closer ancestor having arity at least 2.
Assign $\ell$ to $\True$ if its $\nu$ is labelled by $\lor$ 
or to $\False$ if $\nu$ is labelled by $\land$. This permit to assign $\nu$ to $\True$ (resp. $\False$) and
to cut all its children (among which there is at least one other leaf since $\nu$ has arity at least 2). 
Assign all other non-essential variables appearing in $\trim(\hat T_{\infty})$
to $\True$ (this assigns some of the leaves to $\True$, and others to $\False$ depending on the polarity of the literal labelling them). 
This operation does not change the function calculated by the tree and after simplification, 
the obtained tree has size at most $p-(d+1)\leq L(f)-1$, which is impossible.

Thus, $\trim(\hat{T}_{\infty})$ contains at least $L(f)+1$ leaves labelled by essential 
variables of $f$, and since $f$ has $\gamma$ essential variables, 
it contains at least $L(f)+1-\gamma$ repetitions.
It follows that
\begin{align*}
&\{\|\trim(\hat T_\infty)\|\ge L(f)+1 \text{~and~} \of[\hat T_{\infty}]=f\} \\
&\hspace{2cm}\subseteq \{\trim(\hat T_{\infty})\text{ contains at least }L(f)+1-\gamma\text{ repetitions}\}.
\end{align*}
The above inclusion is true for any Boolean function having $\gamma$ essential variables, which implies that
\begin{align*}
&\binom{k}{\gamma}
\mathbb P \left(\trim(\hat T_{\infty})\text{ contains at least }L(f)+1-\gamma\text{ repetitions}\right) \\
&\hspace{2cm}\leq 
\frac
{\Ec{\|\trim(\hat T_{\infty})\|^{2(L(f)+1-\gamma)}} 
+ 2\mathtt e\ \Ec{\|\trim(\hat T_{\infty})\|^{L(f)+1-\gamma}}}
{k^{L(f)+1-\gamma}},
\end{align*}
in view of Lemma~\ref{lem:trim},
which concludes the proof since $\binom k \gamma \geq (k-\gamma)^{\gamma}$ 
and all moments of $\|\trim(\hat T_{\infty})\|$ are finite.
\end{proof}

\subsection{Examples}
We show here how to apply Theorem~\ref{thm:theta} to different random trees: we first consider critical Galton--Watson trees, and then the Ford's alpha tree.

\subsubsection{Application to Galton--Watson trees (Proof of Theorem~\ref{prop:GW})}\label{sub:ex_GW}
For all $n\geq 1$, let $T_n$ be a critical Galton--Watson tree conditioned to have size $n$. 
Let us denote by $\xi$ its reproduction random variable: 
in particular, we have $\mathbb E \xi = 1$.
We also assume that there exists a positive constant $a$ such that $\mathbb E \mathtt e^{a\xi} <+\infty$
and that $\mathbb P(\xi=1)=0$.

It is known from the literature, and mentioned earlier in Section~\ref{sub:GW} that
$(T_n)_{n\geq 1}$ converges locally to an infinite random tree $T_{\infty}$ having a unique end,
on which are hanging some independent copies of the critical, unconditioned (and thus almost surely finite) Galton--Watson trees of reproduction $\xi$.
Thus, assumption~({\tt H}) is satisfied.

Moreover, the proof of Proposition~\ref{pro:bound-trimmed-GW} tells us that
the $k$-trimmed subtree $\tau=\trim(\hat T_{\infty})$ of $\hat T_{\infty}$ 
is such that there exists a constant $c>0$ such that,
for all $x>0$,
\[\mathbb P(\|\tau\|\geq x) \leq \exp\left(\nicefrac{-cx}{k^2}\right),\]
as long as there exists $a>0$ such that $\mathbb E \mathtt e^{a\xi} < \infty$, which is assumed here.


Thus, for all $m\geq 1$, for all $x\geq 0$,
\[\mathbb P(\|\trim(\hat T_{\infty})\|^m \geq x)
\leq \exp\left(\frac{-c x^{\nicefrac1{m}}}{k^2}\right),\]
which implies that
\[\mathbb E\|\trim(\hat T_{\infty})\|^m < +\infty,\]
which proves that these Galton--Watson trees verify assumption $(ii)$ of Theorem~\ref{thm:loc-lim}.

Let us now check that Assumption $(i)$ is also verified:
By assumption, $\xi$ has exponential moments, implying that $\hat \xi$ also has exponential moments.
Note that, for all integers~$m$
\[\Delta_m:= N_{m+1}(T_{\infty}) - N_m(T_{\infty})
=  \sum_{i=1}^{\Delta_{m-1}- 1} \xi^{(m)}_i + \hat \xi^{(m)} + 1,\]
where the $\xi^{(m)}_i$'s are independent copies of $\xi$ and $\hat \xi^{(m)}$ is a copy of $\xi$.
Therefore, we can prove by induction on $m$ that, for all integers $m$, $N_m(T_{\infty})$ has exponential moments, 
which implies that Assumption $(i)$ holds.

Therefore, Theorem~\ref{thm:theta} applies to this class of Galton--Watson trees, 
including the Catalan tree model as a particular case, which proves Theorem~\ref{prop:GW}.

\subsubsection{Application to the alpha model (proof of Theorem~\ref{prop:alpha_tree})}\label{sub:ex_alpha}
We consider in this section the alpha--gamma model.
For technical reasons, we restrict ourselves to the case $\alpha=\gamma$,
although Theorem~\ref{prop:alpha_tree} might also hold for other values of $\gamma$.
When $\alpha=\gamma$, the probability that the random tree of size $n$ -- called the alpha tree of size $n$ -- 
has a left subtree of size $k\in\{1, \ldots, n-1\}$ is given by (see Equation~\eqref{eq:alpha-gamma} 
in which we take $\gamma=\alpha$)
\[q_n^{\alpha}(k)=
\frac{\Gamma(k-\alpha)\Gamma(n-k-\alpha)}{\Gamma(n-\alpha)\Gamma(1-\alpha)}
\left[\frac{\alpha}{2} \binom n k + (1-2\alpha)\binom{n-2}{k-1}\right].\]

The following lemmas aim at proving that the alpha model verifies Assumption~$(ii)$ of Theorem~\ref{thm:theta}.


\begin{lem}\label{lem:proba_feuille}
For all $n\geq 1$, $q_n^{\alpha}(1) \geq \nicefrac{\alpha}{2}$.
\end{lem}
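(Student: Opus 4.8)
The plan is to directly bound $q_n^\alpha(1)$ from below using the explicit formula
\[q_n^{\alpha}(k)=
\frac{\Gamma(k-\alpha)\Gamma(n-k-\alpha)}{\Gamma(n-\alpha)\Gamma(1-\alpha)}
\left[\frac{\alpha}{2} \binom n k + (1-2\alpha)\binom{n-2}{k-1}\right]\]
specialised to $k=1$. First I would handle the small cases $n=1,2$ separately, where the claim is immediate (for $n=1$ there is a single leaf and for $n=2$ the left subtree has size $1$ with probability $1$, and in both cases $q_n^\alpha(1)=1\geq\alpha/2$). So assume $n\geq 3$. Setting $k=1$ gives $\Gamma(1-\alpha)$ in the numerator, which cancels one factor of $\Gamma(1-\alpha)$ in the denominator, and $\binom n1 = n$, $\binom{n-2}{0}=1$, so
\[q_n^{\alpha}(1)=
\frac{\Gamma(n-1-\alpha)}{\Gamma(n-\alpha)\Gamma(1-\alpha)}
\left[\frac{\alpha n}{2} + (1-2\alpha)\right]
= \frac{1}{(n-1-\alpha)\Gamma(1-\alpha)}\left[\frac{\alpha n}{2} + 1-2\alpha\right],\]
using $\Gamma(n-\alpha)=(n-1-\alpha)\Gamma(n-1-\alpha)$.

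Next I would observe that $\Gamma(1-\alpha)\leq 1$ for $\alpha\in[0,1)$ — indeed, $\Gamma$ is decreasing on $(0,1]$ with $\Gamma(1)=1$, so $\Gamma(1-\alpha)\geq\Gamma(1)=1$ would be the wrong direction; in fact $\Gamma(1-\alpha)\geq 1$. Hmm — so dividing by $\Gamma(1-\alpha)$ only helps if we want an \emph{upper} bound. For the lower bound I should instead use that $\Gamma(1-\alpha)$ stays bounded, but more carefully: we want $q_n^\alpha(1)\geq\alpha/2$, i.e.
\[\frac{\alpha n/2 + 1 - 2\alpha}{(n-1-\alpha)\Gamma(1-\alpha)} \geq \frac{\alpha}{2},\]
equivalently $\alpha n/2 + 1-2\alpha \geq \frac{\alpha}{2}(n-1-\alpha)\Gamma(1-\alpha)$. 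Since $\Gamma(1-\alpha)\geq 1$ this is not automatic, so the cleanest route is to use the sharper bound $(n-1-\alpha)\Gamma(1-\alpha)=\Gamma(n-\alpha)/\Gamma(n-1-\alpha)$ together with log-convexity of $\Gamma$, or simply the elementary inequality $\Gamma(1-\alpha)\leq \frac{1}{1-\alpha}$ (which follows from $\Gamma(2-\alpha)\leq 1$ since $1\le 2-\alpha\le 2$ and $\Gamma\le 1$ on $[1,2]$, and $\Gamma(2-\alpha)=(1-\alpha)\Gamma(1-\alpha)$). Then it suffices to check
\[\alpha n/2 + 1-2\alpha \geq \frac{\alpha (n-1-\alpha)}{2(1-\alpha)},\]
and multiplying through by $2(1-\alpha)>0$ reduces everything to a polynomial inequality in $n$ and $\alpha$ that holds for all $n\geq 3$, $\alpha\in[0,1)$; one verifies the coefficient of $n$ on the left dominates and the constant terms work out (the case $\alpha$ close to $1$ should be checked, but there $1-2\alpha$ is negative and one uses $n\geq 3$ to absorb it).

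The main obstacle — really the only subtlety — is getting the $\Gamma$-function estimate in the right direction and keeping track of signs when $\alpha>1/2$, since then the term $(1-2\alpha)\binom{n-2}{k-1}$ becomes negative; but for $k=1$ this term is just the constant $1-2\alpha\geq -1$, so it is harmlessly dominated by the linear-in-$n$ term $\alpha n/2$ once $n\geq 3$. I expect the final polynomial inequality to be routine to verify by splitting into $\alpha\leq 1/2$ and $\alpha>1/2$, or simply by noting both sides are affine in $n$ and comparing slopes and values at $n=3$. No deep input is needed beyond the explicit split probability formula already established via \eqref{eq:alpha-gamma} and elementary properties of $\Gamma$.
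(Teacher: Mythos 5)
There is a genuine error in your computation, and it makes the inequality you defer to a ``routine verification'' actually false. The formula for $q_n^\alpha(k)$ has exactly \emph{one} factor of $\Gamma(1-\alpha)$ in the denominator, and for $k=1$ the factor $\Gamma(k-\alpha)=\Gamma(1-\alpha)$ in the numerator cancels it completely. You announce this cancellation but then write a formula that still carries $\Gamma(1-\alpha)$ in the denominator. The correct expression is simply
\[
q_n^{\alpha}(1)=\frac{\Gamma(n-1-\alpha)}{\Gamma(n-\alpha)}\Big(\tfrac{\alpha n}{2}+1-2\alpha\Big)
=\frac{\tfrac{\alpha n}{2}+1-2\alpha}{\,n-1-\alpha\,},
\]
and $q_n^\alpha(1)\ge \alpha/2$ is then equivalent to $\tfrac{\alpha n}{2}+1-2\alpha\ge \tfrac{\alpha}{2}(n-1-\alpha)$, i.e.\ to $\alpha^2-3\alpha+2=(\alpha-1)(\alpha-2)\ge 0$, which holds for all $\alpha\in(0,1]$ with no restriction on $n$. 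This is exactly the paper's (one-line) argument.

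Because of the spurious $\Gamma(1-\alpha)$, you are led to try to prove
$\tfrac{\alpha n}{2}+1-2\alpha\ \ge\ \tfrac{\alpha(n-1-\alpha)}{2(1-\alpha)}$,
and you assert that ``the coefficient of $n$ on the left dominates.'' It does not: the left-hand slope is $\alpha/2$ while the right-hand slope is $\alpha/(2(1-\alpha))>\alpha/2$, so the inequality fails for all large $n$ (e.g.\ $\alpha=\tfrac12$, $n=4$ gives $1\ge \tfrac54$). So the proof as written cannot be completed along the route you sketch; the fix is just to carry out the cancellation you already identified, after which no estimate on $\Gamma(1-\alpha)$ is needed at all.
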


\begin{proof}
We have:
\[q_n^{\alpha}(1)
= \frac{\Gamma(n-1-\alpha)}{\Gamma(n)} \Big(\frac{\alpha n}{2}+(1-2\alpha)\Big) 
=\frac{\alpha}{2}\,\frac{n+\frac{2(1-2\alpha)}{\alpha}}{n-1-\alpha}\geq \frac{\alpha}{2},
\] for all integers $n$.
\end{proof}

As a preliminary, let us also prove that
\begin{lem}\label{lem:delta}
For all $\delta>0$, there exists $n_0\in\mathbb N$ (independent of $\delta$) such that, 
for all $n\geq n_0$,
\[\sum_{\delta n\le k \le (1-\delta)n} q_n^{\alpha}(k) \leq \frac C2\ \delta^{-2(\alpha+1)} n^{-\alpha},
\qquad \text{where}\qquad
C=2\ \frac{\max(\nicefrac{\alpha}{2},1-\nicefrac{3\alpha}{2})}{\Gamma(1-\alpha)}\,.
\]
\end{lem}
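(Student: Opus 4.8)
The plan is to bound the split probability $q_n^{\alpha}(k)$ uniformly over $k$ in the prescribed window and then sum. (If $\delta\ge\tfrac12$ the window is empty and there is nothing to prove, so I assume $\delta<\tfrac12$, whence $k,n-k\ge\delta n$ for every $k$ occurring in the sum.) The first step is to rewrite $q_n^{\alpha}(k)$ in a product form adapted to Stirling-type estimates: since $\binom nk=\frac{n(n-1)}{k(n-k)}\binom{n-2}{k-1}$ and $\binom{n-2}{k-1}=\Gamma(n-1)/(\Gamma(k)\Gamma(n-k))$, one gets
\[
q_n^{\alpha}(k)=\frac1{\Gamma(1-\alpha)}\,\frac{\Gamma(k-\alpha)}{\Gamma(k)}\,\frac{\Gamma(n-k-\alpha)}{\Gamma(n-k)}\,\frac{\Gamma(n-1)}{\Gamma(n-\alpha)}\,\Big[(1-2\alpha)+\frac{\alpha n(n-1)}{2k(n-k)}\Big].
\]
For the two ``leaf'' factors I would use that the sequence $h(j):=\Gamma(j-\alpha)j^{\alpha}/\Gamma(j)$ is non-increasing for $j\ge1$ --- the ratio of consecutive terms is $(1-\alpha/j)(1+1/j)^{\alpha}\le1$ by $\log(1+x)\le x$ --- and tends to $1$; hence $\Gamma(j-\alpha)/\Gamma(j)\le\Gamma(1-\alpha)\,j^{-\alpha}$ for all $j\ge1$, and more sharply $\le h(M)\,j^{-\alpha}$ with $h(M)\downarrow1$ once $j\ge M$. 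For the third factor, log-convexity of $\Gamma$ gives $\Gamma(n)/\Gamma(n-\alpha)\le n^{\alpha}$, so $\Gamma(n-1)/\Gamma(n-\alpha)\le n^{\alpha}/(n-1)$.

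Since $k,n-k\ge\delta n$ and $\delta<1$, the bracket is at most $(1-2\alpha)^{+}+\tfrac{\alpha}{2}\delta^{-2}$ while $k^{\alpha}(n-k)^{\alpha}\ge(\delta n)^{2\alpha}$. Assembling these bounds, summing over the at most $n$ integers $k$ in $[\delta n,(1-\delta)n]$, and using $\delta^{-2\alpha}\le\delta^{-2(1+\alpha)}$ yields an inequality of the form
\[
\sum_{\delta n\le k\le(1-\delta)n}q_n^{\alpha}(k)\ \le\ \frac{\kappa_{M,n}}{\Gamma(1-\alpha)}\,\Big((1-2\alpha)^{+}+\tfrac{\alpha}{2}\Big)\,\delta^{-2(1+\alpha)}\,n^{-\alpha},
\]
where $\kappa_{M,n}$ lumps together the factors $h(M)^{2}$, $n/(n-1)$ and the one coming from counting the terms, and $\kappa_{M,n}\to1$ as $M,n\to\infty$. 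A direct check shows $(1-2\alpha)^{+}+\tfrac{\alpha}{2}=\max(\tfrac{\alpha}{2},\,1-\tfrac{3\alpha}{2})=\tfrac C2\,\Gamma(1-\alpha)$ for every $\alpha\in[0,1]$ (the two summands are exactly the contributions of $\tfrac\alpha2\binom nk$ and $(1-2\alpha)\binom{n-2}{k-1}$), so the claim reduces to absorbing $\kappa_{M,n}$.

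To carry out that absorption with an $n_0$ independent of $\delta$, I would split on the size of $\delta n$. Fix $M$ large enough that $h(M)$ is close to $1$. If $\delta n\ge M$, then every $k$ in the window has $k,n-k\ge M$, the sharp $h(M)$-estimates apply, one may also replace the crude term-count $n$ by $(1-2\delta)n+1$, and a single threshold $n_0'(\alpha,M)$ makes $\kappa_{M,n}$ small enough. If instead $\delta n<M$, then $\delta^{-2(1+\alpha)}>(n/M)^{2(1+\alpha)}$, so the right-hand side of the asserted inequality is at least $\tfrac C2\,M^{-2(1+\alpha)}n^{2+\alpha}$, which exceeds $1\ge\sum_k q_n^{\alpha}(k)$ as soon as $n\ge n_1(\alpha,M)$; one then sets $n_0=\max\big(n_0'(\alpha,M),n_1(\alpha,M)\big)$, which depends on $\alpha$ but not on $\delta$.

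The step I expect to be the real obstacle is the bookkeeping of constants, and in particular whether the sharp Gamma estimates really suffice to keep $\Gamma(1-\alpha)$ in the \emph{denominator} of the final constant (the uniform bound $h(j)\le\Gamma(1-\alpha)$ would leave two stray $\Gamma(1-\alpha)$'s in the numerator); making this work forces one to use the convergence $h(j)\to1$ together with the small-$\delta$ case split above, and this is where the estimates must be handled with care. The Gamma-function inequalities themselves and the summation are routine.
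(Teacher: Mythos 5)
Your route is genuinely different from the paper's. The paper expands every Gamma function and binomial coefficient by Stirling's formula and stops at $\sum_k q_n^{\alpha}(k)\le \tfrac C2(1+o(1))\,n\,(n/(\delta n)^2)^{\alpha+1}$, leaving the $(1+o(1))$ unabsorbed and never addressing the uniformity in $\delta$ that the parenthetical in the statement demands. You replace the asymptotics by exact, non-asymptotic inequalities: the product form of $q_n^{\alpha}(k)$, the monotonicity of $h(j)=\Gamma(j-\alpha)j^{\alpha}/\Gamma(j)$ (hence $\Gamma(j-\alpha)/\Gamma(j)\le h(M)j^{-\alpha}$ for $j\ge M$ with $h(M)\downarrow 1$), the log-convexity bound $\Gamma(n-1)/\Gamma(n-\alpha)\le n^{\alpha}/(n-1)$, and the identity $(1-2\alpha)^{+}+\tfrac\alpha2=\max(\tfrac\alpha2,1-\tfrac{3\alpha}2)$ all check out, and they reproduce exactly the paper's intermediate estimate with an explicit factor $\kappa_{M,n}$ in place of the paper's $1+o(1)$. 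You are also the only one of the two to confront the fact that the constant $C/2$ is claimed exactly.

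The one step that does not work as written is Case A of your absorption. With $M$ fixed, $h(M)^2$ is a constant strictly greater than $1$, while the saving from counting $(1-2\delta)n+1$ terms instead of $n$ is a factor at most $1-2\delta+O(1/n)$; in the regime $M\le\delta n=o(n)$ (take $\delta=2M/n$) that factor tends to $1$, so $\kappa_{M,n}\to h(M)^2>1$ and no threshold $n_0'(\alpha,M)$ can make it $\le 1$. The repair is to let $M$ grow with $n$ rather than fixing it first: with $M=M(n)=\lceil\sqrt n\rceil$ one has $h(M)^2=1+\alpha(\alpha+1)n^{-1/2}+O(1/n)$ while the count factor is at most $1-2n^{-1/2}(1+o(1))$ in Case A, so the product is eventually below $1$ whenever $\alpha(\alpha+1)<2$, i.e.\ $\alpha<1$; and in Case B one still has $\delta<n^{-1/2}$, hence $\tfrac C2\delta^{-2(1+\alpha)}n^{-\alpha}\ge\tfrac C2 n\ge 1$ for $n$ large. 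Alternatively you could simply note that the exact value of $C$ is immaterial for the lemma's downstream use (the induction in the next lemma only needs a constant independent of $\delta$ and $n$), and that the paper's own proof in fact only establishes the bound with $C$ replaced by $C(1+o(1))$ — so accepting a slightly enlarged constant would also close the argument.
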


\begin{proof}
For all $\delta>0$, asymptotically when $n\to+\infty$,
\begin{align*}
\sum_{\delta n \le k \le (1-\delta) n} q_n^{\alpha}(k)
&= \sum_{\delta n \le k \le (1-\delta) n} \frac{\Gamma(k-\alpha)\Gamma(n-k-\alpha)}{\Gamma(n-\alpha)\Gamma(1-\alpha)}
\left[\frac{\alpha}{2} \binom n k + (1-2\alpha)\binom{n-2}{k-1}\right]\\
&= \frac{\mathtt e^{\alpha+1} +o(1)}{\Gamma(1-\alpha)}\ 
\sum_{\delta n \le k \le (1-\delta) n}
\frac{(k-\alpha-1)^{k-\alpha-\nicefrac12}(n-k-\alpha-1)^{n-k-\alpha-\nicefrac12}}{(n-\alpha-1)^{n-\alpha-\nicefrac12}}\\
&\hspace{1cm}\left[\frac{\alpha}{2} \frac{n^{n+\nicefrac12}}{k^{k+\nicefrac12} (n-k)^{n-k+\nicefrac12}}
+ (1-2\alpha) \frac{(n-2)^{n-\nicefrac32}}{(k-1)^{k-\nicefrac12} (n-k-1)^{n-k-\nicefrac12}}\right]\\
&= \frac{\max(\nicefrac{\alpha}{2},1-\nicefrac{3\alpha}{2})+o(1)}{\Gamma(1-\alpha)}\ 
\sum_{\delta n \le k \le (1-\delta) n}
\left(\frac{n}{k(n-k)}\right)^{\alpha+1}\\
&
\leq \frac{C}{2} (1+o(1))\ n \left(\frac{n}{(\delta n)^2}\right)^{\alpha+1}
\end{align*}
where $C=2\ \frac{\max(\nicefrac{\alpha}{2},1-\nicefrac{3\alpha}{2})}{\Gamma(1-\alpha)}$ does not depend on $\delta$.
\end{proof}

Let us define the following cutting procedure:
a node having at least one child of size 1 is cut with probability $\varepsilon>0$.
Let us denote by $\tau(n)$ the size of the alpha tree of size $n$ after having applied this cut procedure.
We are going to prove that all the moments of $\tau(n)$ are finite.

Let us start with its expectation:
\begin{lem}
There exists a constant $M_1$ such that, for all $n\geq 1$, 
$\mathbb E \tau(n) \leq \frac{M_1 n^{\alpha}}{1+\ln^2 n}$.
\end{lem}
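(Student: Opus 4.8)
The plan is to derive a recursion for $b(n):=\mathbb E\tau(n)$ from the recursive description of the $n$-leaf alpha tree and to prove $b(n)\le\phi(n):=M_1 n^{\alpha}/(1+\ln^2 n)$ by strong induction on $n$. The root of the alpha tree has a left subtree of size $K$ and a right subtree of size $n-K$ with $\mathbb P(K=k)=q_n^{\alpha}(k)$, and $q_n^{\alpha}$ is symmetric, $q_n^{\alpha}(k)=q_n^{\alpha}(n-k)$. The cut procedure applied to the whole tree factorises into the procedure applied independently to the two subtrees, plus the cut at the root, which is active only when $\min(K,n-K)=1$ (i.e.\ $K\in\{1,n-1\}$), and in that case removes the whole tree with probability $\varepsilon$. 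Since $\tau(1)=1$, taking expectations gives
\[
b(n)=2\sum_{k=2}^{n-2}q_n^{\alpha}(k)\,b(k)+2\,q_n^{\alpha}(1)\,(1-\varepsilon)\bigl(1+b(n-1)\bigr),
\]
and Lemma~\ref{lem:proba_feuille} supplies the ``gain'': $q_n^{\alpha}(1)\ge\alpha/2$, so the whole tree is destroyed with probability at least $\varepsilon\alpha/2$ at the root.

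Fix $\delta=1/4$. I would record four facts, each uniform in $n$ for $n$ large: (1) by Lemma~\ref{lem:delta}, $\sum_{\delta n\le k\le(1-\delta)n}q_n^{\alpha}(k)\le\frac{C}{2}\delta^{-2(\alpha+1)}n^{-\alpha}$; (2) by symmetry and $\sum_k q_n^{\alpha}(k)=1$, the mass carried by $2\le k\le\delta n$ is at most $\tfrac12\bigl(1-2q_n^{\alpha}(1)\bigr)$; (3) there is a constant $C_\delta$ with $q_n^{\alpha}(k)\le C_\delta\,k^{-(\alpha+1)}$ for all $2\le k\le\delta n$, obtained by Stirling's formula applied to the ratios of Gamma functions and the binomials in the definition of $q_n^{\alpha}$, in the spirit of the computation in the proof of Lemma~\ref{lem:delta} (for bounded $k$ one just uses $q_n^{\alpha}(k)\le1$); and (4) $\phi$ is increasing on $[x_\alpha,\infty)$ for some $x_\alpha$, while $S_1:=\sum_{k\ge2}\frac1{k(1+\ln^2 k)}<\infty$ --- this last convergence, which fails for $\sum1/k$ and for $\sum1/(k\ln k)$, is precisely what forces the factor $1+\ln^2 n$ in the claimed bound. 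The base cases $n<N^{*}$ are then handled by the trivial bound $b(n)\le n$ and by choosing $M_1$ large enough that $M_1 n^{\alpha}/(1+\ln^2 n)\ge n$ for those finitely many $n$; the threshold $N^{*}$ is chosen (large, in terms of $\alpha,\varepsilon,\delta,C_\delta,S_1$ only, hence independently of $M_1$) so that all the estimates below hold and $\phi$ is increasing on $[N^{*}/4,\infty)$, which avoids any circularity.

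For the inductive step ($n\ge N^{*}$) one substitutes $b(j)\le\phi(j)$ for $j<n$ into the recursion and splits $2\sum_{k=2}^{n-2}q_n^{\alpha}(k)\phi(k)$ into the bulk $[\delta n,(1-\delta)n]$, the small side $[2,\delta n]$ and the large side $[(1-\delta)n,n-2]$. By (1) and monotonicity of $\phi$ the bulk contributes at most $C\delta^{-2(\alpha+1)}M_1/(1+\ln^2 n)$; by (3) the small side contributes at most $2C_\delta M_1 S_1$, which does not depend on $n$, since $k^{-(\alpha+1)}\phi(k)=M_1/(k(1+\ln^2 k))$; and by symmetry ($q_n^{\alpha}(k)=q_n^{\alpha}(n-k)$), fact (2) and monotonicity of $\phi$ the large side contributes at most $\bigl(1-2q_n^{\alpha}(1)\bigr)\phi(n)$. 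Adding the boundary term $2q_n^{\alpha}(1)(1-\varepsilon)(1+\phi(n-1))\le2q_n^{\alpha}(1)(1-\varepsilon)\phi(n)+O(1)$, and absorbing the two $n$-independent constants and the $O(1/(1+\ln^2 n))$ term into $\le\tfrac{\alpha\varepsilon}{8}\phi(n)$ each (after cancelling $M_1$, each such comparison only constrains $n$, hence $N^{*}$), one gets
\[
b(n)\le\Bigl(1-2q_n^{\alpha}(1)\varepsilon+\tfrac{3\alpha\varepsilon}{8}\Bigr)\phi(n)\le\Bigl(1-\tfrac{5\alpha\varepsilon}{8}\Bigr)\phi(n)<\phi(n),
\]
using $q_n^{\alpha}(1)\ge\alpha/2$ once more; this closes the induction.

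The main obstacle is fact (3): establishing $q_n^{\alpha}(k)\lesssim k^{-(\alpha+1)}$ uniformly over the entire near-boundary window $2\le k\le\delta n$ requires a Stirling estimate of the explicit formula for $q_n^{\alpha}(k)$ that is valid both for small fixed $k$ and for $k$ growing proportionally to $n$. Everything else is bookkeeping: choosing $\delta$, then $N^{*}$, then $M_1$ in that order, and noticing that the near-boundary sum $\sum_k q_n^{\alpha}(k)\phi(k)$ converges exactly because of the logarithmic denominator, which is what makes the target bound of the lemma the natural one.
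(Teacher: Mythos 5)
Your proposal is correct and follows the same skeleton as the paper's proof --- strong induction on $n$ via the recursion $\mathbb E\tau(n)=2\sum_{k=2}^{n-2}q_n^{\alpha}(k)\,\mathbb E\tau(k)+2(1-\varepsilon)q_n^{\alpha}(1)(1+\mathbb E\tau(n-1))$, a split at $\delta n$, Lemma~\ref{lem:delta} for the bulk, and Lemma~\ref{lem:proba_feuille} to extract the gain $2\varepsilon q_n^{\alpha}(1)\ge \varepsilon\alpha$ --- but you handle the near-boundary region differently. The paper bounds $\frac{k^{\alpha}}{1+\ln^2 k}$ for $k\le\delta n$ crudely by roughly $\delta^{\alpha}\,\frac{n^{\alpha}}{1+\ln^2 n}$ and uses total mass at most $1$, which forces it to choose $\delta$ small enough that $\delta^{\alpha}<\varepsilon\alpha$; you instead keep $\delta=1/4$ fixed and pay for the small side with the pointwise tail bound $q_n^{\alpha}(k)\le C_\delta k^{-(\alpha+1)}$ together with the summability of $\sum_k 1/(k(1+\ln^2 k))$, yielding an $n$-independent constant that is absorbed for $n\ge N^{*}$. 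This is a cleaner accounting of where the logarithmic correction is really needed, and your treatment of the large side via symmetry and the mass bound $\tfrac12(1-2q_n^{\alpha}(1))$ is tighter than the paper's. The one substantive item you leave unestablished is your fact (3): the uniform bound $q_n^{\alpha}(k)\lesssim k^{-(\alpha+1)}$ over the whole window $2\le k\le n/4$ is true and follows from Stirling applied to the explicit formula, but it is not supplied by Lemma~\ref{lem:delta} (whose statement and proof only concern the bulk $\delta n\le k\le(1-\delta)n$ and are purely asymptotic), so it would have to be written out as a separate estimate; you correctly flag this as the main remaining work. Apart from that, only routine care is needed (e.g.\ $x\mapsto x^{\alpha}/(1+\ln^2x)$ is only \emph{eventually} increasing, and the $O(1)$ from the boundary term is absorbed under the harmless normalisation $M_1\ge 1$), which matches the level of rigour of the paper's own argument.
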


\begin{proof}
We prove this result by induction.

Assume that, for all $k\leq n-1$, $\mathbb E \tau(k) \leq \frac{M_1 k^{\alpha}}{1+\ln^2 k}$.
Using the induction hypothesis,
\begin{align*}
\mathbb E \tau(n)
&= \sum_{k=2}^{n-2} q_n^{\alpha}(k) \left(\mathbb E\tau(k)+ \mathbb E \tau(n-k)\right)
+ (1-\varepsilon) 2q_n^{\alpha}(1) \left(1+\mathbb E \tau(n-1)\right)\\
&\leq M_1 \sum_{k=2}^{n-2} q_n^{\alpha}(k) \left(\frac{k^{\alpha}}{1+\ln^2 k}+ \frac{(n-k)^{\alpha}}{1+\ln^2 (n-k)}\right)
+ (1-\varepsilon) 2q_n^{\alpha}(1) \left(1+\frac{M_1 (n-1)^{\alpha}}{1+\ln^2 (n-1)}\right)\\
&= M_1 \sum_{k=1}^{n-1} q_n^{\alpha}(k) \left(\frac{k^{\alpha}}{1+\ln^2 k}+ \frac{(n-k)^{\alpha}}{1+\ln^2 (n-k)}\right)
- 2 M_1 \varepsilon q_n^{\alpha}(1) \left(1+\frac{(n-1)^{\alpha}}{1+\ln^2 (n-1)}\right)\\
&\hspace{2cm}+ 2 (1-\varepsilon) q_n^{\alpha}(1) (1-M_1).
\end{align*}

First note that 
\[\left(1+\frac{(n-1)^{\alpha}}{1+\ln^2 (n-1)}\right)
\geq \frac{1+(n-1)^{\alpha}}{1+\ln^2 n}\geq \frac{n^{\alpha}}{1+\ln^2 n}.\]
Moreover, for all $\delta>0$, for {$C$ the constant of Lemma~\ref{lem:delta}} and all $n$ large enough
\begin{align*}
&\sum_{k=1}^{n-1} q_n^{\alpha}(k) \left(\frac{k^{\alpha}}{1+\ln^2 k}+ \frac{(n-k)^{\alpha}}{1+\ln^2 (n-k)}\right)\\
&\hspace{.5cm}= \sum_{\delta n \le k \le (1-\delta) n} q_n^{\alpha}(k) \left(\frac{k^{\alpha}}{1+\ln^2 k}+ \frac{(n-k)^{\alpha}}{1+\ln^2 (n-k)}\right)
+ \sum_{k<\delta n \text{ or }k>(1-\delta)n} q_n^{\alpha}(k) \left(\frac{k^{\alpha}}{1+\ln^2 k}+ \frac{(n-k)^{\alpha}}{1+\ln^2 (n-k)}\right)\\
&\hspace{.5cm}\leq 
\frac{2[(1-\delta)n]^{\alpha}}{1+\ln^2 (\delta n)}\ \sum_{\delta n \le k \le (1-\delta) n} q_n^{\alpha}(k)
+ \left(\frac{(\delta n)^{\alpha}}{1+\ln^2 [(1-\delta)n]} + \frac{n^{\alpha}}{1+\ln^2 [(1-\delta)n]}\right) 
\sum_{k<\delta n \text{ or }k>(1-\delta)n} q_n^{\alpha}(k)\\
&\hspace{.5cm}\leq 
\frac{2 C [(1-\delta)n]^{\alpha}}{1+\ln^2 (\delta n)}\ \delta^{-2(1+\alpha)} n^{-\alpha}
+ [\delta^{\alpha} + 1]\ \frac{1+\ln^2 n}{1+\ln^2 [(1-\delta)n]} \ \frac{n^{\alpha}}{1+\ln^2 n}\\
&\hspace{.5cm}\leq 
2 C (1-\delta)^{\alpha}\delta^{-2(1+\alpha)}
+ (\delta^{\alpha} + 1)\ \frac{1+\ln^2 n}{1+\ln^2 [(1-\delta)n]} \ \frac{n^{\alpha}}{1+\ln^2 n},
\end{align*}
where we have applied Lemma~\ref{lem:delta}.
In total, we thus get:
\begin{align*}
\mathbb E \tau(n)
\leq &\frac{M_1 n^{\alpha}}{1+\ln^2 n}
\left[(\delta^{\alpha}+1)\frac{1+\ln^2 n}{1+\ln^2 [(1-\delta)n]}- 2\varepsilon q_n^{\alpha}(1)\right]
+ M_1\left(2 C\delta^{-2(\alpha+1)}-2(1-\varepsilon)q_n^{\alpha}(1))\right)\\
&+ 2(1-\varepsilon)q_n^{\alpha}(1).
\end{align*}
Let us first fix $\delta$ such that $-2\eta:= \delta^{\alpha}-\varepsilon \alpha < 0$.
It implies that, in view of Lemma~\ref{lem:proba_feuille},
\[(\delta^{\alpha}+1)\frac{1+\ln^2 n}{1+\ln^2 [(1-\delta)n]}- 2\varepsilon q_n^{\alpha}(1)
\leq (\delta^{\alpha}+1)\frac{1+\ln^2 n}{1+\ln^2 [(1-\delta)n]}- 2\varepsilon \frac{\alpha}{2}
\to \delta^{\alpha}+1-2\varepsilon q_n^{\alpha}(1) = 1-2\eta,\]
when $n\to+\infty$. Thus, there exists $n_{\delta}$ such that, for all $n\geq n_{\delta}$,
\[(\delta^{\alpha}+1)\frac{1+\ln^2 n}{1+\ln^2 [(1-\delta)n]}- 2\varepsilon q_n^{\alpha}(1) < 1-\eta.\]
Thus, for all $n\geq n_{\delta}$, 
\[\mathbb E \tau(n)
\leq \frac{M_1 n^{\alpha}}{1+\ln^2 n} 
+ M_1\left(2C \delta^{-2(\alpha+1)}-2(1-\varepsilon)q_n^{\alpha}(1)-\frac{\eta n^{\alpha}}{1+\ln^2 n}\right)
+ 2(1-\varepsilon)q_n^{\alpha}(1).
\]
There exists $\tilde n_{\delta} > n_{\delta}$ such that, for all $n\geq \tilde n_{\delta}$,
\[2C \delta^{-2(\alpha+1)}-2(1-\varepsilon)q_n^{\alpha}(1)-\frac{\eta n^{\alpha}}{1+\ln^2 n} < -1,\]
thus, choosing
\[M_1:= 2 + \max_{n\leq \tilde n_{\delta}} \left\{\frac{1+\ln^2 n}{n^{\alpha}}\ \mathbb E \tau(n)\right\}\]
we have that, for all $n\geq 1$,
\[\mathbb E \tau(n)
\leq \frac{M_1 n^{\alpha}}{1+\ln^2 n}  - M_1 + 2(1-\varepsilon)
\leq \frac{M_1 n^{\alpha}}{1+\ln^2 n},\]
which concludes the proof.
\end{proof}

\begin{lem}\label{lem:expectation}
There exists a constant $K>0$ such that, for all $n\geq 2$, $\mathbb E \tau(n) \leq K$.
\end{lem}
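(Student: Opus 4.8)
The plan is to feed the bound $\mathbb E\tau(n)\le M_1 n^{\alpha}/(1+\ln^2 n)$ obtained just above back into the recursion for $a_n:=\mathbb E\tau(n)$. Recall from the proof of the previous lemma that, for $n\ge 3$,
\[
a_n=\sum_{k=2}^{n-2}q_n^{\alpha}(k)\,(a_k+a_{n-k})+2(1-\varepsilon)\,q_n^{\alpha}(1)\,(1+a_{n-1}).
\]
Using the symmetry $q_n^{\alpha}(k)=q_n^{\alpha}(n-k)$, this rewrites as
\[
a_n=2\sum_{k=2}^{n-1}q_n^{\alpha}(k)\,a_k-2\varepsilon\,q_n^{\alpha}(1)\,a_{n-1}+2(1-\varepsilon)\,q_n^{\alpha}(1).
\]
Set $b_n:=\max_{2\le m\le n}a_m$; since the cut tree is a subtree of the size-$n$ alpha tree we have $a_m\le m$, so every $b_n$ is finite, and it suffices to prove that $(b_n)_{n\ge 2}$ is bounded.

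I would split the sum $2\sum_{k=2}^{n-1}q_n^{\alpha}(k)a_k$ at $\lfloor n/2\rfloor$. For the small indices $2\le k\le\lfloor n/2\rfloor$ I would use $a_k\le M_1 k^{\alpha}/(1+\ln^2 k)$ together with the bound $q_n^{\alpha}(k)\le C'\big(n/(k(n-k))\big)^{\alpha+1}\le C'2^{\alpha+1}k^{-\alpha-1}$ (valid for $k\le n/2$; obtained by the same Stirling computation as in the proof of Lemma~\ref{lem:delta}); since $\sum_{k\ge 2}\tfrac{1}{k(1+\ln^2 k)}<\infty$, these terms contribute at most an $n$-independent constant $2\kappa$. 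This is precisely the step requiring the $\ln^{-2}n$ refinement of the previous lemma: with only an $O(n^{\alpha})$ bound the resulting series $\sum_k k^{-1}$ would diverge when $\alpha\le 1$. For the large indices I would peel off the term $k=n-1$ and recombine it with the correction $-2\varepsilon q_n^{\alpha}(1)a_{n-1}$, producing the contribution $2(1-\varepsilon)q_n^{\alpha}(1)a_{n-1}\le 2(1-\varepsilon)q_n^{\alpha}(1)\,b_{n-1}$; the remaining indices $\lfloor n/2\rfloor<k\le n-2$ carry total $q_n^{\alpha}$-mass at most $\tfrac12-q_n^{\alpha}(1)$ (again by symmetry of $q_n^\alpha$), with each $a_k$ there at most $b_{n-1}$. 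Adding $2(1-\varepsilon)q_n^{\alpha}(1)\le 2$ and collecting, I would get, for all $n$ large enough,
\[
a_n\le (2\kappa+2)+\big(1-2\varepsilon\,q_n^{\alpha}(1)\big)\,b_{n-1}\le (2\kappa+2)+(1-\varepsilon\alpha)\,b_{n-1},
\]
the last inequality using $q_n^{\alpha}(1)\ge\alpha/2$ (Lemma~\ref{lem:proba_feuille}).

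It then remains to close a routine induction. Put $\theta:=\varepsilon\alpha\in(0,1)$, $C_3:=2\kappa+2$, let $n_1$ be a threshold past which the last display holds, and set $K:=\max\{C_3/\theta,\ \max_{2\le m\le n_1}a_m\}$, which is finite since $a_m\le m$. If $b_{n-1}\le K$ then $a_n\le(1-\theta)K+C_3\le K$ because $C_3\le\theta K$, hence $b_n\le K$; the base cases $2\le n\le n_1$ hold by the definition of $K$, so $b_n\le K$ for all $n$, i.e. $\mathbb E\tau(n)=a_n\le K$.

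I expect the main obstacle to be the bookkeeping in the recursion rather than any hard estimate: one must recombine the $k=n-1$ term with the cut correction \emph{before} replacing $a_{n-1}$ by $b_{n-1}$, for otherwise the coefficient in front of $b_{n-1}$ is $\ge 1$ and the induction collapses; and one must extract a genuinely $n$-independent bound for the small-index sum, which is exactly what the strength of the preceding lemma provides.
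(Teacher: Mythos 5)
Your argument is correct, but it is not the route the paper takes. The paper's proof does not touch the recursion at all: it passes to the local limit, writes $\mathbb E\tau(\infty)=\mathbb E H\cdot\mathbb E\tau(N_1)$ with $H$ geometric of parameter $\varepsilon\alpha$ (the spine is cut at each level with probability at least $\varepsilon\alpha$ since $\mathbb P(N_i=1)=\alpha$), bounds $\mathbb E\tau(N_1)=\sum_k\frac{\alpha\Gamma(k-\alpha)}{k!\Gamma(1-\alpha)}\mathbb E\tau(k)$ using the previous lemma and $\Gamma(k-\alpha)/k!\sim k^{-1-\alpha}$ to get a convergent series $\sum_k 1/(k\ln^2k)$, and concludes by the monotonicity $\mathbb E\tau(n)\le\mathbb E\tau(\infty)=:K$. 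Note that both proofs ultimately rest on the \emph{same} convergent series $\sum_k k^{-1-\alpha}\cdot k^\alpha/(1+\ln^2k)$, so you have correctly identified why the logarithmic refinement of the preceding lemma is indispensable; the difference is only in how the weight $k^{-1-\alpha}$ arises (as $\mathbb P(N_1=k)$ in the paper, as a uniform bound on $q_n^\alpha(k)$ for $k\le n/2$ in your version). Your approach buys self-containedness: it avoids the paper's implicit coupling behind $\mathbb E\tau(n)\le\mathbb E\tau(\infty)$ and the Wald-type identity, at the price of the bookkeeping you describe (recombining the $k=n-1$ term with the cut correction before majorizing by $b_{n-1}$, which you handle correctly, and which is indeed the step where a careless version of the induction would fail). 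The one point you should tighten is the bound $q_n^\alpha(k)\le C'k^{-\alpha-1}$ uniformly over $2\le k\le n/2$ and all large $n$: Lemma~\ref{lem:delta} only treats $\delta n\le k\le(1-\delta)n$ and its Stirling expansion is not uniform down to bounded $k$; the bound is nonetheless true and follows directly from $\Gamma(k-\alpha)/k!=O(k^{-1-\alpha})$ together with $\frac{\Gamma(n-k-\alpha)}{\Gamma(n-\alpha)}\binom nk=\frac1{k!}\prod_{j=1}^k\bigl(1+\frac{1+\alpha}{n-j-\alpha}\bigr)\le\frac{e^{O(1)}}{k!}$ for $k\le n/2$, so this is a small gap of justification rather than of substance.
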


\begin{proof}
We have proved in Section~\ref{sec:examples} that the $\alpha$ tree converges locally to an infinite spine on which are attached alpha trees of random sizes $(N_i)_{i\geq 0}$, from top to bottom in the tree. The sequence $(N_i)_{i\geq 0}$ is a sequence of i.i.d. random variables having law
\[\mathbb P(N_i = k) = \frac{\alpha\Gamma(k-\alpha)}{k!\Gamma(1-\alpha)}\text{ for all }k\geq 1.\]
Let us denote by $\tau(\infty)$ the size of this infinite tree after applying it the cutting procedure explained above.
First note that for all $i\geq 0$ $\mathbb P(N_i=1) = \alpha$.
Thus, if we explore the spine from top to bottom, we cut the spine between $u_i$ and $u_{i+1}$ with probability at least $\alpha \varepsilon$ independently for all $i\geq 0$.
Thus, the spine is cut at height at most $H$ where $H$ is a geometric random variable with parameter $\varepsilon \alpha$.
Thus,
\[
\mathbb E\tau(\infty) 
= \mathbb E H \mathbb E \tau(N_1)
= \frac1{\alpha \varepsilon} \sum_{k\geq 1}\frac{\alpha\Gamma(k-\alpha)}{k!\Gamma(1-\alpha)} \mathbb E \tau(k)
\leq \frac1{\alpha \varepsilon} \sum_{k\geq 1}\frac{\alpha\Gamma(k-\alpha)}{k!\Gamma(1-\alpha)} \frac{k^{\alpha}}{1+\ln^2 k}.
\]
Note that
\[\frac{\alpha\Gamma(k-\alpha)}{k!\Gamma(1-\alpha)} \frac{k^{\alpha}}{1+\ln^2 k}
\sim \frac{\alpha}{\Gamma(1-\alpha)} \frac{1}{k\ln^2 k},\]
when $k\to+\infty$. Thus, it is summable, implying that
$\mathbb E\tau(\infty) < +\infty$.
Finally remark that, for all $n\geq 2$,
\[\mathbb E \tau(n) \leq \mathbb E\tau(\infty)=:K,\]
which concludes the proof.
\end{proof}

\begin{lem}\label{lem:moments}
For all $p\geq 1$, there exists a constant $K_p$ such that, for all $n\geq 2$,
$\mathbb E\tau(n)^p \leq K_p$.
\end{lem}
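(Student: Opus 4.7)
I proceed by induction on $p$; the base case $p=1$ is Lemma~\ref{lem:expectation}. For the inductive step, I assume that $\mathbb E\tau(n)^j\le K_j$ uniformly in $n\ge 2$ for every $j\in\{1,\ldots,p-1\}$, and follow the two-step strategy of Lemma~\ref{lem:expectation}: first establish an $n$-dependent bound by induction on $n$, then transfer it to a uniform constant bound via the infinite-spine decomposition.

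\textbf{Step 1 (an $n$-dependent bound).} The aim is to show that there exists $M_p>0$ such that $\mathbb E\tau(n)^p\le M_p n^{\alpha}/(1+\ln^2 n)$ for every $n\ge 2$, by induction on $n$. The natural recurrence at the root of the alpha-tree reads
\[\mathbb E\tau(n)^p=\sum_{k=2}^{n-2}q_n^{\alpha}(k)\,\mathbb E[(\tau(k)+\tau(n-k))^p]+2(1-\varepsilon)q_n^{\alpha}(1)\,\mathbb E[(1+\tau(n-1))^p];\]
expanding via the binomial theorem and using the independence of $\tau(k)$ and $\tau(n-k)$, the ``pure'' terms of degree $p$ produce $\mathbb E\tau(k)^p+\mathbb E\tau(n-k)^p$ (and $\mathbb E\tau(n-1)^p$), while the ``mixed'' terms $\binom{p}{j}\mathbb E\tau(k)^j\mathbb E\tau(n-k)^{p-j}$ for $0<j<p$ are uniformly bounded by $\binom{p}{j}K_jK_{p-j}$ thanks to the outer induction hypothesis. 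After the same algebraic rearrangement as in the proof of Lemma~\ref{lem:expectation} (using the symmetry $q_n^{\alpha}(k)=q_n^{\alpha}(n-k)$), this yields
\[\mathbb E\tau(n)^p\le\sum_{k=1}^{n-1}q_n^{\alpha}(k)[\mathbb E\tau(k)^p+\mathbb E\tau(n-k)^p]-2\varepsilon q_n^{\alpha}(1)\mathbb E\tau(n-1)^p+D_p,\]
where $D_p$ depends only on $\varepsilon,\alpha,p,K_1,\ldots,K_{p-1}$. This is structurally identical to the recurrence that drives the proof of $\mathbb E\tau(n)\le M_1 n^{\alpha}/(1+\ln^2 n)$. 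Plugging in the inductive assumption $\mathbb E\tau(k)^p\le M_pk^{\alpha}/(1+\ln^2 k)$ for $k<n$, applying Lemma~\ref{lem:delta} to split the sum at $k\in[\delta n,(1-\delta)n]$ versus the extremal regime, and using $q_n^{\alpha}(1)\ge\alpha/2$ from Lemma~\ref{lem:proba_feuille}, the very same computation gives
\[\mathbb E\tau(n)^p\le [\delta^{\alpha}+1-\varepsilon\alpha+o(1)]\,\frac{M_p n^{\alpha}}{1+\ln^2 n}+D_p.\]
Choosing $\delta$ small enough that $\delta^{\alpha}+1-\varepsilon\alpha<1$, then $M_p$ large enough to absorb $D_p$ and to dominate the finitely many base cases $n\le n_{\delta}$ via the trivial bound $\mathbb E\tau(n)^p\le n^p$, closes the induction.

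\textbf{Step 2 (transfer via the infinite tree).} Recall from the proof of Lemma~\ref{lem:expectation} that $\tau(\infty)\le\sum_{i=0}^{H-1}(\tau(N_i)+1)$, where $H$ is stochastically dominated by a geometric random variable of parameter $\alpha\varepsilon$ and, conditionally on $H=h$, the $(\tau(N_i))_{i<h}$ are i.i.d.\ with marginal $\tau(\tilde N)$, where $\tilde N$ denotes $N$ conditioned on not triggering a cut (whose law still has tail $\Theta(k^{-1-\alpha})$). Applying Jensen's inequality $(\sum_{i=0}^{h-1}x_i)^p\le h^{p-1}\sum_{i=0}^{h-1}x_i^p$ and the elementary estimate $(a+1)^p\le 2^{p-1}(a^p+1)$ yields
\[\mathbb E\tau(\infty)^p\le 2^{p-1}\,\mathbb E H^p\cdot(\mathbb E\tau(\tilde N)^p+1).\]
The crucial point is that the exponent $\alpha$ from Step 1 (and not $p\alpha$) makes the spine sum
\[\mathbb E\tau(\tilde N)^p=\sum_k \mathbb P(\tilde N=k)\,\mathbb E\tau(k)^p\le M_p\sum_k \mathbb P(\tilde N=k)\,\frac{k^{\alpha}}{1+\ln^2 k}\]
converge, since its summand behaves like $1/(k\ln^2 k)$. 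Combined with $\mathbb E H^p<\infty$ (geometric), this gives $\mathbb E\tau(\infty)^p<\infty$, and the same final comparison as at the end of Lemma~\ref{lem:expectation} delivers $K_p:=\sup_n\mathbb E\tau(n)^p<\infty$, completing the induction.

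\textbf{Main obstacle.} The crux of the argument is that the $n$-dependent bound in Step 1 can be taken with exponent $\alpha$ rather than $p\alpha$; this is precisely what the outer induction on $p$ buys us. The mixed binomial terms involve only lower moments $\mathbb E\tau(k)^j$ with $j<p$, which are uniformly bounded by the inductive hypothesis, and therefore contribute only an additive constant $D_p$ to the recurrence, preserving the favourable coefficient $\delta^{\alpha}+1-\varepsilon\alpha<1$ that drove the $p=1$ analysis. Without this inductive reduction, the naive bound $n^{p\alpha}/(1+\ln^2 n)$ would defeat the convergence of the spine sum in Step 2 for every $p\ge 2$.
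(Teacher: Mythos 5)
Your proposal is correct and follows essentially the same route as the paper's proof: an outer induction on $p$ in which the mixed binomial terms are absorbed into an additive constant via the lower-moment bounds, an inner induction on $n$ reproducing the $M_p n^{\alpha}/(1+\ln^2 n)$ estimate of the $p=1$ case, and a final transfer to a uniform constant through the infinite-spine decomposition with the geometric cutting height. Your Step 2 is in fact slightly more careful than the paper's (handling the $+1$ terms and the conditioning of $N_i$ on not triggering a cut explicitly), but the argument is the same.
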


\begin{proof}
Lemma~\ref{lem:expectation} is the case $p=1$.
For all $p\geq 1$, $\mathbb E \tau(1)^p = 1$.

Let us fix $p$, and $n$, and assume that, 
\begin{itemize}
\item for all $q\leq p$, for all $n\geq 2$, $\mathbb E \tau(n)^q \leq K_q$; and that,
\item Let 
\[M_p:= \frac{\sum_{i=1}^{p-1} \binom p i K_i K_{p-i}}{2\varepsilon q_n^{\alpha}(1)},\] 
and assume that, for all $k\leq n$, $\mathbb E \tau(k)^p \leq \frac{M_p k^\alpha}{1+\ln^2 k}$.
\end{itemize}
Applying the induction hypothesis, we have:
\begin{align*}
\mathbb E \tau(n)^p
=& \sum_{k=2}^{n-2} q_n^{\alpha}(k) \mathbb E[(\tau(k)+\tau(n-k))^p]
+ (1-\varepsilon) 2 q_n^{\alpha}(1) \mathbb E[(1+\tau(n-1))^p]\\
=& \sum_{k=2}^{n-2} q_n^{\alpha}(k) \sum_{i=0}^p \binom p i \mathbb E \tau(k)^i \mathbb E \tau(n-k)^(p-i)
+ (1-\varepsilon) 2 q_n^{\alpha}(1) \sum_{i=0}^p \binom p i \mathbb E[\tau(n-1)^i]\\
\leq & \sum_{k=2}^{n-2} q_n^{\alpha}(k) \sum_{i=1}^{p-1} \binom p i K_i K_{p-i} + 
M_p \sum_{k=2}^{n-2} q_n^{\alpha}(k) \left(\frac{k^{\alpha}}{1+\ln^2 k} + \frac{(n-k)^{\alpha}}{1+\ln^2 (n-k)}\right)\\
&+ (1-\varepsilon) 2 q_n^{\alpha}(1) \sum_{i=1}^{p-1} \binom p i K_i 
+ M_p (1-\varepsilon) 2 q_n^{\alpha}(1) \left(1+ \frac{(n-1)^{\alpha}}{1+\ln^2 (n-1)}\right)\\
=&\sum_{i=1}^{p-1} \binom p i K_i K_{p-i} - 2\varepsilon q_n^{\alpha}(1) \sum_{i=1}^{p-1} \binom p i K_i \\
&+ M_p\left[\sum_{k=1}^{n-1} q_n^{\alpha}(k) \left(\frac{k^{\alpha}}{1+\ln^2 k} + \frac{(n-k)^{\alpha}}{1+\ln^2 (n-k)}\right)
- 2\varepsilon q_n^{\alpha}(1) \left(1+ \frac{(n-1)^{\alpha}}{1+\ln^2 (n-1)}\right)\right]
\end{align*}
Reusing the calculations made in the proof of Lemma~\ref{lem:expectation}, we get that:
\begin{align*}
\mathbb E \tau(n)^p
&\leq \sum_{i=1}^{p-1} \binom p i K_i K_{p-i} + \left(1-2\varepsilon q_n^{\alpha}(1)\right) \frac{M_p n^{\alpha}}{1+\ln^2 n}\\
&\leq\frac{M_p n^{\alpha}}{1+\ln^2 n} + \sum_{i=1}^{p-1} \binom p i K_i K_{p-i} 
- 2\varepsilon q_n^{\alpha}(1) M_p\\
&\leq \frac{M_p n^{\alpha}}{1+\ln^2 n},
\end{align*}
in view of the value of $M_p$.

We have thus proved that there exists a constant $M_p$ such that, for all $n\geq 2$, $\mathbb E \tau(n)^p\leq \frac{M_p n^{\alpha}}{1+\ln^2 n}$.
To prove that this implies that $\mathbb E \tau(n)^p <+\infty$, let us recall that the local limit of the alpha-tree is an infinite spine on which are hung some independent alpha-trees of respective sizes $(N_i)_{i\geq 1}$. We have already noticed that if we apply the cutting procedure to this infinite tree, the spine is cut at level $H$, where $H$ is a geometric random variable of parameter $\alpha \varepsilon$. 
Note that
\[\mathbb E \tau(N_1)^p 
= \sum_{k\geq 1} \frac{\alpha\Gamma(k-\alpha)}{k!\Gamma(1-\alpha)} \mathbb E \tau(k)^p
\leq \frac{M_p\alpha}{\Gamma(1-\alpha)} \sum_{k\geq 1} \frac{\Gamma(k-\alpha)}{k!} \frac{k^{\alpha}}{1+\ln^2 k} < +\infty.\]
If we denote by $\tau(\infty)$ the size of the obtained tree, then
\[\mathbb E \tau(\infty)^p 
= \mathbb E \left(\sum_{i=1}^H \tau(N_i)\right)^p
\leq \mathbb E \left[H^{p-1} \sum_{i=1}^H \tau(N_i)^p\right]
= \mathbb E H^p \mathbb E \tau(N_1)^p =: K_p < +\infty,
\]
which concludes the proof because $\mathbb E \tau(n)^p \leq \mathbb E \tau(\infty)^p$ for all $n\geq 2$.
\end{proof}

We are now ready to prove that the alpha-tree verifies the hypothesis of Theorem~\ref{thm:theta}, 
by relating the idealized cutting procedure described above to the trimming procedure.
We have already proved (see Section~\ref{sub:alpha-gamma}) that the alpha tree locally converges 
to an infinite tree $T_{\infty}$ which consists of an infinite spine to which are attached independent 
alpha trees of random i.i.d. sizes $(N_i)_{i\geq 0}$ where 
\[\mathbb P (N_1= k) = \frac{\alpha\Gamma(k-\alpha)}{k!\Gamma(1-\alpha)}.\]
Hypothesis ({\tt H}) is thus verified.

The alpha tree is by construction binary (see Figure~\ref{fig:alpha_tree}), and thus, Assumption $(i)$ of Theorem~\ref{thm:theta}
is trivially verified: for all integer $m\geq 1$, $N^{m+2}_m(T_{\infty}) \leq 2^{m(m+2)}<+\infty$. 

Let us couple the trimming procedure applied on $\hat T_{\infty}$ to the following procedure. 
Let $\varepsilon = \nicefrac1{2k}$. 
At first, all nodes of the local limit of the alpha tree $T_{\infty}$ are black: starting from the root,
\begin{itemize}
\item if a black node has a child of size $1$, then colour it and all its descendent in red; otherwise, colour it in blue;
\item when all nodes are coloured either in blue or in red, apply the cutting procedure above for all red parts of the tree: a red node having a child of size one is cut with probability $\varepsilon$. 
\end{itemize}
One can couple the above procedure with $\trim$ so that the size of the obtained tree by 
the procedure above is larger than the size of $\trim(\hat T_{\infty})$.
The black nodes correspond to the nodes in $\hat T_{\infty}$ having empty constraints sets,
the red nodes correspond to nodes having a non-empty constraint set: 
thus each leaf can make all its siblings inconsistent with probability at least $\nicefrac1{2k}$.
Since the internal nodes that were parent of a set of inconsistent siblings do not contribute to the size of $\trim(\hat T_{\infty})$,
cutting them together with their children does not affect the size of $\trim(\hat T_{\infty})$ 
(but it might change the Boolean function calculated by the tree).
Thus, the above procedure gives a random tree $\tau$ whose size stochastically dominates $\trim(\hat T_{\infty})$.

Moreover this tree $\tau$ consists in a blue tree to whose leaves are attached some red trees.
Let us denote by $\|\mathtt{blue}\|$ the number of the blue nodes in the above tree: 
Lemma~\ref{lem:moments} applied in the special case with $\varepsilon = 1$ gives us that all the moments of $\|\mathtt{blue}\|$ are finite.
Then, the red trees attached to the leaves of the blue tree are i.i.d. There is at most $\|\mathtt{blue}\|$ of them and by Lemma~\ref{lem:moments} applied to $\varepsilon = \frac1{2k}$, their sizes all have finite moments.

Therefore, the size of the whole tree (blue nodes and red nodes) has finite moments, which permits to conclude that $\|\trim(\hat T_{\infty})\|$ has finite moments. We can then apply our main result Theorem~\ref{thm:theta} to the alpha model and deduce Theorem~\ref{prop:alpha_tree}.

\setlength{\bibsep}{0.3em}
\bibliographystyle{plainnat}
\bibliography{andor}

\begin{thebibliography}{39}
\providecommand{\natexlab}[1]{#1}
\providecommand{\url}[1]{\texttt{#1}}
\expandafter\ifx\csname urlstyle\endcsname\relax
  \providecommand{\doi}[1]{doi: #1}\else
  \providecommand{\doi}{doi: \begingroup \urlstyle{rm}\Url}\fi

\bibitem[Abraham and Delmas(2014)]{AbDe2014a}
R.~Abraham and J.-F. Delmas.
\newblock {Local limits of conditioned Galton--Watson trees: the infinite spine
  case}.
\newblock \emph{Electronic Journal of Probability}, 19\penalty0 (2):\penalty0
  1--19, 2014.

\bibitem[Aldous(1996)]{Aldous1996}
D.~Aldous.
\newblock Probability distributions on cladograms.
\newblock In D.~Aldous and R.~Pemantle, editors, \emph{Random Discrete
  Structures}, pages 1--18. Springer, 1996.

\bibitem[Aldous and Steele(2003)]{AlSt2003}
D.~Aldous and J.~M. Steele.
\newblock The objective method: probabilistic combinatorial optimization and
  local weak convergence.
\newblock In H.~Kesten, editor, \emph{Discrete and Combinatorial Probability},
  pages 1--72. Springer Verlag, 2003.

\bibitem[Balogh et~al.(2006)Balogh, Peres, and Pete]{BaPePe2006a}
J.~Balogh, Y.~Peres, and G.~Pete.
\newblock {Bootstrap percolation on infinite trees and non-amenable groups}.
\newblock \emph{Combinatorics, Probability and Computing}, 15:\penalty0
  715--730, 2006.

\bibitem[Broutin and Flajolet(2012)]{BrFl2012a}
N.~Broutin and P.~Flajolet.
\newblock The distribution of the height and diameter in random non-plane
  binary trees.
\newblock \emph{Random Structures \& Algorithms}, 41:\penalty0 215--252, 2012.

\bibitem[Broutin et~al.(2008)Broutin, Devroye, McLeish, and de~la
  Salle]{BrDeMcSa2008}
N.~Broutin, L.~Devroye, E.~McLeish, and M.~de~la Salle.
\newblock The height of increasing trees.
\newblock \emph{Random Structures and Algorithms}, 32:\penalty0 494--518, 2008.

\bibitem[Chauvin et~al.(2004)Chauvin, Flajolet, Gardy, and
  Gittenberger]{ChFlGaGi2004}
B.~Chauvin, P.~Flajolet, D.~Gardy, and B.~Gittenberger.
\newblock And/or trees revisited.
\newblock \emph{Combinatorics, Probability and Computing}, 13:\penalty0
  475--497, 2004.

\bibitem[Chauvin et~al.(2011)Chauvin, Gardy, and Mailler]{ChGaMa2011a}
B.~Chauvin, D.~Gardy, and C.~Mailler.
\newblock {The growing tree distribution on Boolean functions}.
\newblock In \emph{Proceedings of the 8th SIAM Workshop on Analytic and
  Combinatorics (ANALCO)}, pages 45--56, 2011.

\bibitem[Chauvin et~al.(2015)Chauvin, Gardy, and Mailler]{ChGaMa2015a}
B.~Chauvin, D.~Gardy, and C.~Mailler.
\newblock {A sprouting tree model for random Boolean functions}.
\newblock \emph{Random Structures and Algorithms}, 2015.
\newblock To appear.

\bibitem[Chen et~al.(2009)Chen, Ford, and Winkel]{ChFoWi2009}
B.~Chen, D.~Ford, and M.~Winkel.
\newblock A new family of {M}arkov branching trees: the alpha-gamma model.
\newblock \emph{Electronic Journal of Probability}, 14:\penalty0 400--430,
  2009.

\bibitem[Comtet(1974)]{Comtet1974a}
L.~Comtet.
\newblock \emph{{Advanced Combinatorics}}.
\newblock Reidel Publishing Company, Dordrecht, 1974.

\bibitem[Dembo and Zeitouni(1998)]{DeZe1998}
A.~Dembo and O.~Zeitouni.
\newblock \emph{Large Deviation Techniques and Applications}.
\newblock Springer, second edition, 1998.

\bibitem[Devroye(1986)]{Devroye1986}
L.~Devroye.
\newblock A note on the height of binary search trees.
\newblock \emph{Journal of the ACM}, 33:\penalty0 489--498, 1986.

\bibitem[Devroye(1998)]{Devroye1998}
L.~Devroye.
\newblock Universal limit laws for depth in random trees.
\newblock \emph{SIAM Journal on Computing}, 28\penalty0 (2):\penalty0 409--432,
  1998.

\bibitem[Flajolet and Sedgewick(2009)]{FlSe2009}
P.~Flajolet and R.~Sedgewick.
\newblock \emph{Analytic Combinatorics}.
\newblock Cambridge University Press, Cambridge, UK, 2009.

\bibitem[Ford(2005)]{Ford2005}
D.J. Ford.
\newblock Probabilities on cladograms: introduction to the alpha model.
\newblock ArXiv:0511246 [math.PR], 2005.

\bibitem[Fournier et~al.(2009)Fournier, Gardy, and Genitrini]{FoGaGe2009}
H.~Fournier, D.~Gardy, and A.~Genitrini.
\newblock {Balanced And/Or trees and linear threshold functions}.
\newblock In \emph{5th SIAM Workshop on Analytic and Combinatorics (ANALCO)},
  pages 51--57, 2009.

\bibitem[Fournier et~al.(2012)Fournier, Gardy, Genitrini, and
  Gittenberger]{FoGaGeGi2012a}
H.~Fournier, D.~Gardy, A.~Genitrini, and B.~Gittenberger.
\newblock {The fraction of large random trees representing a given Boolean
  function in implicational logic}.
\newblock \emph{Random Structures \& Algorithms}, 40\penalty0 (3):\penalty0
  317--349, 2012.

\bibitem[Gardy(2006)]{Gardy2006a}
D.~Gardy.
\newblock {Random Boolean functions}.
\newblock In \emph{Computational Logic and Applications (CLA '05)}, volume~AF
  of \emph{DMTCS Proc.}, pages 1--36, 2006.

\bibitem[Genitrini et~al.(2015)Genitrini, Gittenberger, Kraus, and
  Mailler]{GGKM15}
A.~Genitrini, B.~Gittenberger, V.~Kraus, and C.~Mailler.
\newblock Associative and commutative tree representations for {B}oolean
  functions.
\newblock \emph{Theoretical Computer Science}, 570:\penalty0 70--101, 2015.

\bibitem[Grimmett(1980)]{Grimmett1980}
G.R. Grimmett.
\newblock {Random labelled trees and their branching networks}.
\newblock \emph{Journal of the Australian Mathematical Society, Series A},
  30:\penalty0 229--237, 1980.

\bibitem[Haas and Miermont(2004)]{HaMi2004a}
B.~Haas and G.~Miermont.
\newblock {The genealogy of self-similar fragmentations with negative index as
  a continuum random tree}.
\newblock \emph{Electronic Journal of Probability}, 9:\penalty0 57--97, 2004.

\bibitem[Haas and Miermont(2012)]{HaMi2012a}
B.~Haas and G.~Miermont.
\newblock {Scaling limits of Markov branching trees with applications to
  Galton--Watson and random unordered trees}.
\newblock \emph{The Annals of Probability}, 40:\penalty0 2589--2666, 2012.

\bibitem[Kesten(1986)]{Kesten1986a}
H.~Kesten.
\newblock {Subdiffusive behavior of random walk on a random cluster}.
\newblock \emph{Annales de l'Institut Henri Poincar\'e: Probabilit\'es et
  Statistiques}, 22:\penalty0 425--487, 1986.

\bibitem[Knuth(1973)]{Knuth1973b}
D.~E. Knuth.
\newblock \emph{The Art of Computer Programming: Sorting and Searching},
  volume~3.
\newblock Addison-Wesley, Reading, MA, 1973.

\bibitem[Kozik(2008)]{Kozik2008a}
J.~Kozik.
\newblock {Subcritical pattern languages for And/Or trees}.
\newblock In \emph{Proceedings of the Fifth Colloquium on Mathematics and
  Computer Science}, volume~AI, pages 437--448. DMTCS Proceedings, 2008.

\bibitem[Lefmann and Savick{\'y}(1997)]{LeSa1997}
H.~Lefmann and P.~Savick{\'y}.
\newblock {Some typical properties of large and/or Boolean formulas}.
\newblock \emph{Random Structures and Algorithms}, 10\penalty0 (3):\penalty0
  337--351, 1997.

\bibitem[Lyons et~al.(1995)Lyons, Pemantle, and Peres]{LyPePe95a}
R.~Lyons, R.~Pemantle, and Y.~Peres.
\newblock {Conceptual proofs of the $L \log L$ criteria for mean behavior of
  branching processes}.
\newblock \emph{The Annals of Probability}, \penalty0 (23):\penalty0
  1125--1138, 1995.

\bibitem[Mahmoud(1992)]{Mahmoud1992a}
H.~Mahmoud.
\newblock \emph{Evolution of Random Search Trees}.
\newblock Wiley, New York, 1992.

\bibitem[Marchal(2008)]{Marchal2008}
P.~Marchal.
\newblock {A note on the fragmentation of a stable tree}.
\newblock In \emph{Fifth Colloquium on Mathematics and Computer Science},
  volume~AI of \emph{DMTCS Proceedings}, pages 489--500, 2008.

\bibitem[Marckert and Miermont(2011)]{MaMi2011a}
J.-F. Marckert and G.~Miermont.
\newblock {The CRT is the scaling limit of unordered binary trees}.
\newblock \emph{Random Structures \& Algorithms}, 38:\penalty0 467--501, 2011.

\bibitem[Meir and Moon(1978)]{MM78}
A.~Meir and J.~W. Moon.
\newblock On the altitude of nodes in random trees.
\newblock \emph{Canadian journal of Mathematics}, 30\penalty0 (1978):\penalty0
  997--1015, 1978.

\bibitem[Otter(1948)]{Otter1948}
R.~Otter.
\newblock On the number of trees.
\newblock \emph{Annals of Mathematics}, 49:\penalty0 583--599, 1948.

\bibitem[Paterson(1992)]{BooleanFunctionsBook}
M.~S. Paterson, editor.
\newblock \emph{Boolean Function Complexity}.
\newblock Cambridge University Press, 1992.
\newblock ISBN 9780511526633.
\newblock Cambridge Books Online.

\bibitem[Pitman(1995)]{Pitman1995a}
J.~Pitman.
\newblock {Exchangeable and partially exchangeable random partitions}.
\newblock \emph{Probability Theory and Related Fields}, 102:\penalty0 145--158,
  1995.

\bibitem[Pittel(1984)]{Pittel1984}
B.~Pittel.
\newblock On growing random binary trees.
\newblock \emph{Journal of Mathematical Analysis and its applications},
  103:\penalty0 461--480, 1984.

\bibitem[P\'olya(1937)]{Polya1937}
G.~P\'olya.
\newblock Kombinatorishe {A}nzahlbestimmungen f\'ur {G}ruppen, {G}raphen und
  chemishe {V}erbindungen.
\newblock \emph{Acta Mathematica}, 68:\penalty0 145--254, 1937.

\bibitem[Riordan and Shannon(1942)]{RiSh1942}
J.~Riordan and C.E. Shannon.
\newblock {The number of two-terminal series-parallel networks}.
\newblock \emph{Journal of Mathematics and Physics}, 21:\penalty0 83--93, 1942.

\bibitem[Woods(1997)]{Woods1997}
A.R. Woods.
\newblock Coloring rules for finite trees, and probabilities of monadic second
  order sequences.
\newblock \emph{Random Structures and Algorithms}, 10:\penalty0 453--485, 1997.

\end{thebibliography}

\end{document}